\newtheorem{theorem}{Theorem}[section]
\newtheorem{lemma}[theorem]{Lemma}
\newtheorem{corollary}[theorem]{Corollary}
\theoremstyle{definition}
\newtheorem{definition}[theorem]{Definition}
\theoremstyle{remark}
\newtheorem{remark}[theorem]{Remark}
\numberwithin{equation}{section}
\title{\bf Bloch Spectra for High Contrast Elastic Media}
\author{Robert Lipton\thanks{Department of Mathematics, Louisiana State University,
Baton Rouge, LA 70803, USA,
https://orcid.org/0000-0002-1382-3204, 
{\tt lipton@lsu.edu}}
\and 
Ruchira Perera, 
\thanks {*Corresponding Author, Department of Mathematics, Lockett Hall 343,  
Louisiana State University,
Baton Rouge, LA 70803, USA, https://orcid.org/0000-0001-9621-7359, 
{\tt jperer3@lsu.edu}}}
\date{}
\begin{document}
\maketitle
\begin{abstract}

Analytic representation formulas and power series are developed to describe the band structure inside periodic elastic crystals made from high contrast inclusions. We use source free modes associated with structural spectra to represent the solution operator of the Lam\'e system inside phononic crystals. Convergent power series for the Bloch wave spectrum are obtained using the representation formulas. An explicit bound on the convergence radius is given through the structural spectra of the inclusion array and the Dirichlet spectra of the inclusions. 
Sufficient conditions for the separation of spectral branches of the dispersion relation for any fixed quasi-momentum are identified. 
A condition is found that is sufficient for the emergence of band gaps.

\begin{flushleft}
Keywords: Phononics, Bloch waves, Spectrum, High contrast, Bragg scattering.
\end{flushleft}

\end{abstract}

%
%
\maketitle

\section{Introduction}
\label{introduction}

 \setlength\itemsep{2em}
 
High contrast periodic elastic crystals  have been studied both theoretically and experimentally and have been shown to exhibit unique dispersive properties. One can split high contrast crystals into two classes based on the length scale of the crystal structure relative to the wavelength. When crystal geometry is on the same length scale as the elastic wave the dispersion is due to Bragg scattering and the patterned material is referred to as a phononic crystal, \cite{EconomouSigalas2,Siglas,Kushawas,Sigmund,Vassur,AmmariKangLee1,Li2}. Alternatively if the wave length lies above the crystal period a sub-wavelength resonance can be induced and this becomes the principal effect that controls wave dispersion. Crystals of this type are referred to as phononic metamaterials, \cite{AvilaGriso,Rohan, Smyshlyaev,Comi,Vondrejc}. 
In this article we address the former ``multiple scattering,'' problem when the wave length is on the scale of the heterogeneities. We consider periodic arrays with low wave velocity inclusions embedded in a high wave velocity medium (often referred to as the matrix). Such crystals exhibit novel dispersion  and are known to exhibit band gaps \cite{EconomouSigalas}. 



In this article new rigorous and explicit analytic representation formulas and power series are developed to describe wave dispersion inside phononic crystals. These results apply to wave propagation inside phononic crystals made from high contrast inclusions. The explicit formulas are used to investigate the propagation band structure of the crystal as a function of the inclusion geometry. The phononic elastic crystal is a composite of two materials each with different density and elasticity.  The propagation of a Bloch wave $h(x)$ at frequency $\omega$ inside the elastic crystal  is described by the differential equation,
\begin{equation}
\label{I1}
    -\nabla\cdot (\mathbf{C}(x)\mathcal{E} h(x))=\omega^2\rho(x)h(x),\; x\in\mathbb{R}^d,\; d=2,3.
    \end{equation}
Here $\mathbf{C}(x), x\in\mathbb{R}^d$, is the fourth rank tensor that represents the local elastic constants of the material and $\rho(x)$ is the local density. The crystal is taken to be infinite in extent and with out loss of generality the unit period cell is the cube $Y=(0,1]^d$. 
The Bloch wave $h(x)$ inside the crystal satisfies the $\alpha$ quasi-periodicity condition $h(x+p)=h(x)e^{i\alpha\cdot p}$, where $\alpha$ is the quasi-momentum in the first Brillouin zone $Y^*=(-\pi, \pi]^d$.  The piece wise constant elastic tensor and density are periodic and satisfy $\mathbf{C}(x)=\mathbf{C}(x+p)$ and $\rho(x)=\rho(x+p) $ with $p\in\mathbb{Z}^d, d=2,3$.  The crystal is composed of a periodic array of isolated inclusions $D$ surrounded by a second phase. The array of inclusions is described by the set $\Omega=\cup_{m\in\mathbb{Z}^d}(D+m)$, and the connected phase is described by $\mathbb{R}^d\setminus\Omega$. In this treatment the boundary of the inclusion is  taken to be $C^\infty$ smooth. 
The low velocity inclusions are embedded in a high velocity matrix, i.e., $\rho^1>\rho^2$ and $k>1$.
The piece wise  constant density and the piece wise constant elasticity tensor for the medium are written    
\begin{equation}
\label{Elastic tensor}
\begin{aligned}
\mathbf{C}(x) & =\mathbf{C}^1\chi_{\Omega}(x)+\mathbf{C}^2(1-\chi_\Omega(x))\\
\rho(x) & = \rho^1\chi_{\Omega}(x)+\rho^2(1-\chi_\Omega(x))
\end{aligned}
\end{equation}
where $\rho^1$, $\rho^2$ are constant densities and the elasticity tensor $\mathbf{C}^1:=C_{ijkl}:=\lambda_1\delta_{ij}\delta_{kl}+\mu_1(\delta_{ik}\delta_{jl}+\delta_{il}\delta_{jk})$  is isotropic and specified by Lam\'e constants $(\lambda_1,\mu_1)$ and $\mathbf{C}^1$  satisfies 
\begin{equation}\label{ellip}
    0<\gamma |\zeta|^2\leq\mathbf{C}^1\zeta:\zeta\leq \beta|\zeta|^2,
\end{equation}
for all $\zeta\in Sym^d
=\{\zeta\in\mathbb{R}^{d\times d}:\zeta=\zeta^T\}$. The elastic moduli describing
$\mathbf{C}^2$ are given by $\lambda_2=k\lambda_1$ and $\mu_2=k\mu_1$, $1\leq  k <\infty$, where $k$ represents the contrast between the two elastic materials. The symmetric gradient of the elastic displacement $u$ is denoted by $\mathcal{E}(u)$, given by,
\[
\mathcal{E}(u)=\frac{1}{2}(\nabla u+\nabla u^t),
\]
where the superscript $t$ denotes the matrix transpose. 
The corresponding co-normal derivative on $\partial\Omega$ is 
\begin{equation}
\label{I4}
 \partial_{n}u:=(\mathbf{C}^1\mathcal{E}(u))n
\end{equation}
where $n$ is the outward unit normal vector to $\partial\Omega$ and the Lam\'e operator $\mathcal{L}$ on $\mathbb{R}^d$, $d=2,3$ is defined to be
\begin{equation}
    \label{Lame Operator}
 \mathcal{L}u:= \nabla\cdot\mathbf{C}^1\mathcal{E}(u)=\mu_1\Delta u+(\lambda_1+\mu_1)\nabla(\nabla\cdot u).
\end{equation}
Here we prove the results for the $d=3$ case. Our approach also applies to the $d=2$ case, however the specifics differ and this will be reported in a separate publication.

In this paper we investigate the band structure as a function of the elastic contrast $k$ between the two materials, inclusion shape and placement inside the period cell. It is known that frequency band gaps open up for elastic crystals for sufficiently high contrast, see \cite{EconomouSigalas}.
For each $\alpha\in Y^*$ the Bloch eigenvalues $\omega^2$ are of finite multiplicity and denoted by $\xi_j(k,\alpha), \; j\in\mathbb{N}.$ We develop explicit series expansions in the contrast $k$ for each branch of the dispersion relation
\begin{equation}
\label{dispersion relation}
 \xi_j(k,\alpha)=\omega^2, \; j\in\mathbb{N}   
\end{equation}
that are valid for $k$ in a neighborhood of infinity.  The radii of convergence and convergence rate for the series are found to depend explicitly on the inclusion shape and placement within the period cell, see sections \ref{Radius of conv} and \ref{Example}.  Conditions sufficient for the separation of spectral branches of the dispersion relation for any fixed quasi-momentum are found, see section \ref{Radius of conv}.  We characterize the high contrast limit of the Bloch spectra and give sufficient conditions for the emergence of band gaps, see section \ref{sec-bandgap}. When the inclusion is symmetric a new spectral interlacing  property is found that is identical to that seen for scalar problems in acoustics, see \cite{HempleLennau}. The approach taken here is distinct from other approaches and as noted earlier is not asymptotic, instead it uses shape and configurational information contained in the structural spectra of the periodic array of inclusions. The structural spectra is identified here for the elastic problem and is a family of eigenvalues $\{\tau_i(\alpha)\}_{i=1}^\infty$, $\alpha\in Y^\ast$ associated with eigenvalue problems 
that encode the geometry of the crystal, see Definition \ref{structural}. For fixed $\alpha \in Y^\ast$ the eigenvalues $\{\tau_i(\alpha)\}_{i=1}^\infty$, are referred to as the quasi-periodic spectra of the crystal, see \eqref{specq} of section \ref{Hilbert setting}. We identify this spectrum with the spectrum of the 
well known Neumann Poincar\'e operator \cite{Khavinson}, \cite{AndoKangMiyanishi}, \cite{AndoYong} constructed in the quasi periodic setting, see Lemma \ref{Ht2.3}.

\begin{figure}
\centering
\begin{tikzpicture}
  \shade[yslant=-0.5,right color=gray!10, left color=black!50]
    (0,0) rectangle +(3,3);
  \shade[yslant=0.5,right color=gray!50,left color=gray!10]
    (3,-3) rectangle +(3,3);
  \shade[yslant=0.5,xslant=-1,bottom color=gray!10,
    top color=black!80] (6,3) rectangle +(-3,-3);
  \shade[ball color = orange!80, opacity = 0.4] (3.0,1.5) circle (1.5cm);
  \node [below] at (2.5,-0.25) {$Y\setminus D$};
  \node [right] at (3.5,1.0) {$D$};
\end{tikzpicture}
\caption{{\bf Inclusion geometry inside a period Cell.}}
 \label{plane}
\end{figure}
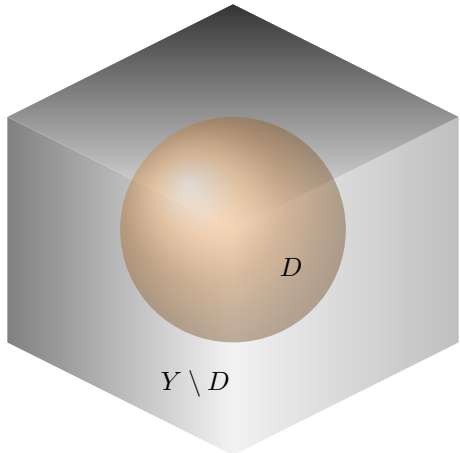

To proceed we complexify the problem and consider $k\in\mathbb{C}$. Now $\mathbf{C}(x)$ takes on complex values inside $Y\setminus D$ and divergence form operator is no longer uniformly elliptic. Our approach develops an explicit representation formula for $-\nabla\cdot(\mathbf{C}(x)\mathcal{E}(u))$ that holds for complex values of $k$. We identify the subset $z=\frac{1}{k}\in\Omega_0$ of $\mathbb{C}$ where this operator is invertible. The explicit formula shows that the solution operator $(-\nabla\cdot (\mathbf{C}^1\chi_D+\mathbf{C}^2\chi_{Y\setminus D}))\mathcal{E})^{-1}$ may be regarded more generally as a meromorhic operator valued function of $z$ for $z\in\Omega_0=\mathbb{C}\setminus S^\alpha,$ see section \ref{Representation} and Lemma \ref{PL1} . Here the set $S^\alpha$ consists of poles lying on the negative real axis with three accumulation points bounded away from $z=0$. These poles are in one to one relation to with the structural spectra $\{\tau_i(\alpha)\}_{i=1}^\infty$, $\alpha\in Y^\ast$. The interval of the negative real axis containing $S^\alpha$ can be bounded uniformly for $\alpha\in Y^\ast$ for a wide class of inclusion shapes and placements inside the unit period cell, see Sections \ref{Separation of spectra} and \ref{Example}. For the problem treated here we expand about $z=0$ and the set $S^\alpha$ is used to bound the radius of convergence for the power series. The spectral representation for $-\nabla\cdot (\mathbf{C}^1\chi_D+\mathbf{C}^2\chi_{Y\setminus D}))\mathcal{E}$ follows from the existence of a complete set of orthonormal set of quasi-periodic functions associated with the quasi-periodic resonances of the crystal, i.e., quasi periodic functions $v$ and real eigenvalues $\xi$ for which
\begin{equation}
\label{reduced eigval problem}
   -\nabla\cdot (\mathbf{C}^1\chi_D)\mathcal{E}(v)=-\xi \nabla\cdot \mathbf{C}^1\mathcal{E}(v).
\end{equation}
These resonances are shown to be connected to the spectra of elastostatic Neumann-Poincar\'e operators associated with quasi periodic double layer potentials.  For $\alpha=0$ and for a single sphere in $\mathbb{R}^3$ these correspond to the elastostatic eigenvalues identified in \cite{Youjun}. Both elastostatic Neumann-Poincar\'e (N-P) operators and associated elastosatic resonances have been the focus of theoretical investigations \cite{AndoKangMiyanishi}. Unlike scalar problems \cite{Khavinson}, these investigations have shown that N-P operator is not compact even for smooth domains. Instead the seminal work \cite{AndoKangMiyanishi} shows that the N-P operator is polynomially compact. These results have been applied in analysis of cloaking by anomalous localized resonance for the elastostatic system \cite{AndoYong}. The explicit spectral representation for the operator $(-\nabla\cdot (\mathbf{C}^1\chi_D+\mathbf{C}^2\chi_{Y\setminus D})\mathcal{E})$ developed here is crucial for elucidating the interaction between the contrast $k$ and the quasi-periodic resonances of the crystal, see \eqref{B2}, \eqref{H35}, and \eqref{H36}. The spectral representation is applied to analytically continue the band structure $\xi_j(k,\alpha)=\omega^2,\;j\in\mathbb{N},\; \alpha\in Y^*$ for $k$ onto $\mathbb{C},$ see Theorem \ref{BT1}. Application of the contour integral formula for spectral projections \cite{TKato1}, \cite{TKato2} and \cite{TKato3} delivers an analytic representation formula for the band structure, see section \ref{Representation}. We apply perturbation theory in section \ref{Representation} together with a calculation provided in section \ref{sec-derivation} to find explicit formula for the radii of convergence for the power series $\xi_j(k,\alpha)$ about $1/k=0$. The formula shows that the radius of convergence and separation between different branches of the dispersion relation are determined by: 1) the distance of the origin to the nearest pole $z^*$ of $(-\nabla\cdot (\mathbf{C}^1\chi_D+\mathbf{C}^2\chi_{Y\setminus D}))\mathcal{E})^{-1},$ and 2) the separation between distinct eigenvalues in the $z=1/k\rightarrow 0$ limit, see Theorems \ref{Rt1} and \ref{Rt2}. These theorems provide conditions on the contrast guaranteeing the separation of spectral bands that depend explicitly upon $z^*, j\in\mathbb{N}$ and $\alpha\in Y^*.$ Error estimates for series truncated after $N$ terms follows directly from the formulation.

Next we apply these results and develop bounds on the convergence radii for a wide class of inclusions called buffered geometries. A buffered geometry is described by a randomly placed inclusion inside the unit period cell with a finite distance of separation between inclusion and cell boundary, see section \ref{Separation of spectra}. For these geometries we demonstrate that the poles of $(-\nabla\cdot (\mathbf{C}^1\chi_D+\mathbf{C}^2\chi_{Y\setminus D}))\mathcal{E})^{-1}$ associated with the quasi-periodic spectra are bounded away from the origin uniformly for $\alpha\in Y^*$. The quasi-periodic spectra $\{\tau_i(\alpha)\}_{i\in\mathbb{N}}$ associated with a buffered geometry is shown to lie inside the interval $-1/2<\tau^-\leq\tau_i(\alpha) \leq 1/2,$ for every $\alpha\in Y^*,$ see Theorem \ref{Gt1} and Corollary \ref{Gc2}. The lower bound $\tau^-$ is independent of $\alpha\in Y^*$ and depends explicitly on the geometry of the inclusions. This control insures that the associated poles of $(-\nabla\cdot (\mathbf{C}^1\chi_D+\mathbf{C}^2\chi_{Y\setminus D}))\mathcal{E})^{-1}$ are uniformly bounded away from the origin and provides an explicit nonzero radius of convergence for the power series representation for the band structure $\xi_j(k,\alpha)=\omega^2$ for each $j\in \mathbb{N}$ and $\alpha\in Y^*$, see Theorems \ref{Rt1} and \ref{Rt2}. In section \ref{Example} we apply these observations to periodic assemblages of buffered spheres. Here a buffered sphere is characterized by a period containing a randomly placed sphere within the interior of the unit call. The term buffer referrers to the distance between the boundary of the sphere to the boundary of the cell. For this case we recover explicit formulas for the radii of convergence of the power series expansion for $\xi_j(k,\alpha)$ and explicit conditions for the separation of spectral bands in terms of the distance between sphere boundary and cell boundary. It is important to emphasize that the results on separation of spectra and convergence of power series are not asymptotic results but are valid for an explicitly delineated regime of finite contrast.

Earlier work on effective properties for periodic and random media \cite{Dell'Antonio},  \cite{KantorBergman}, \cite{Milton}, show that the effective elasticity for a composite medium is an analytic function of the contrast. The effective elasticity function is seen to be nonzero and analytic off the negative real axis and is determined by its singularities and zeros. Estimates for effective properties are obtained from partial knowledge of the singularities and zeros. The work \cite{Bruno} develops power series solutions to bound the poles and zeros of the effective elasticity function. This provides bounds on the effective elasticity function for the class of inclusion geometries  discussed here. Asymptotic expansions for Bloch eigenvalues are developed and applied to the high contrast setting for two dimensional elasticity in \cite{AmmariKangLee1}. The expansions are in terms of the contrast and developed using a boundary integral perturbation approach based on the generalized Rouch\'e's theorem. In that work the high contrast band structure is identified and a criterion for band gap opening is given in 2 dimensions. The criterion sufficient for band gap opening in three dimensions given here is consistent with the one presented in \cite{AmmariKangLee}. However in the present context it is shown to follow from the Lipschitz continuity of Bloch eigenvalues with respect to quasi-momentum at fixed contrast. Recently the appropriate structural spectrum for the Helmholtz operator has been identified and used to quantitatively capture the photonic band structure for wave propagation problems in high contrast media for TE electromagnetic modes in \cite{bibentry1}. This knowledge is used to establish  explicit formulas for both photonic  pass band and band gap frequency intervals as functions of the inclusion geometry in \cite{LiptonViator2}.

Last we point out that earlier related work using different methods provide explicit power series representations for the spectra of two dimensional photonic  metamaterials.  The work of \cite{Fortes1} develops a convergent power series representation for the spectra of metamaterial crystals made from high contrast frequency dependent rods. Explicit of radii of convergence are obtained. The work of \cite{Fortes2} demonstrates the existence of convergent power series expansions for the spectrum of metamaterials containing high contrast positive or negative dielectric inclusions. It is shown that the power series converge for sufficiently small contrast. The work of \cite{chenlipton} provides a power series representation of spectra for metamaterials made from periodic configurations containing both high contrast dielectric rods and frequency dependent dielectric rods. The existence of traveling waves with phase velocity opposite to the direction of the Poynting vector is rigorously shown to follow from Maxwell's equations.

The paper is organized as follows: In the next section we introduce the Hilbert space formulation of the problem and the variational formulation of the quasi-static resonance problem. The completeness of the eigenfunctions associated with the quasi-static spectrum is established and a spectral representation for the operator $(-\nabla\cdot (\mathbf{C}^1\chi_D+\mathbf{C}^2\chi_{Y\setminus D}))\mathcal{E})$ is obtained. These results are collected and used to continue the frequency band structure into the complex plane, see Theorem \ref{BT1}
section \ref{Band structure}. Spectral perturbation theory \cite{TKato3} is applied to recover the power series expansion for Bloch spectra in section \ref{representation}. The leading order spectral theory is developed for quasi-periodic $\alpha\neq 0$ and periodic $\alpha=0$ problems in sections \ref{Spectrum-quasiperiodic} and \ref{Spectrum-periodic}. The main theorems on radius of convergence and convergence rates are given by Theorems \ref{Rt1}, \ref{Rt2}, and \ref{Rt4} are presented in section \ref{Radius of conv}. The class of buffered inclusions is introduced in section \ref{Separation of spectra} and the explicit radii of convergence for a random suspension of disks is presented in section \ref{Example}. The structure of  high contrast limit spectra is given in section \ref{sec-bandgap}. Explicit formulas for each term of the power series expansion is recovered and expressed in terms of layer potentials in section \ref{sec-layerpotential}. In section \ref{sec-explicit first order} the explicit formula for the first order correction in the power series is presented in the form of the Dirichlet energy of the solution of a transmission boundary value problem. This formula follows from the layer potential representation for the first term and consistent with the first order correction obtained in the work of \cite{AmmariKangLee1} for two dimensions. The explicit formulas for the convergence radii are derived in section \ref{sec-derivation} as well as hands on proofs of Theorems \ref{Rt1}, \ref{Rt2} and the error estimates for the series approximation.


\section{Hilbert space setting, quasi-periodic resonances and representation formulas}
\label{Hilbert setting}
We denote the space of all $\alpha$ quasi-periodic complex vector valued functions belonging to $L^2_{loc}(\mathbb{R}^3)^3$ by  $L^2_\#(\alpha,Y)^3$ and the inner product is denoted by
\begin{equation}
\label{H1}
(u,v)=\int_{Y} u\cdot\overline{v}\;dx.
\end{equation}
For $\alpha\neq0$ the eigenfunctions $h$ for \eqref{I1} belong to the space
\begin{equation}
\label{H2}
 H^1_{\#}(\alpha, Y)^3 =\{h\in {H^1_{loc}}(\mathbb{R}^3)^3:h\; \text{is}\; \alpha\; \text{quasi-periodic}\}.
\end{equation}
This space does not contain the space of rigid motions hence the kernel of the symmetric gradient is zero and
the space $H^1_{\#}(\alpha, Y)^3$ is a Hilbert space under the inner product
\begin{equation}
\label{H3}
\langle u,v\rangle =\int_{Y}\mathbf{C}^1\mathcal{E}( u):\overline{\mathcal{E} (v)}\;dx.
\end{equation}

The periodic eigenfunctions of $\eqref{I1}$ associated with nonzero eigenvalues belong to the space
\begin{equation}
\label{H4}
 H^1_{\#}(0, Y)^3 =\left\{h\in {H^1_{loc}}(\mathbb{R}^3)^3:h\; \text{is}\; \text{periodic},\;\int_Y \rho\,h\,dx=0\right \}.   
\end{equation}
Note here that $H^1_{\#}(0, Y)^3$ does not contain the space of rigid motions so it is also a Hilbert space with the inner product $\langle u,v\rangle$ defined by \eqref{H3}.

In what follows we write $\rho=\rho(x)$ and
for any $k\in \mathbb{C}$, the weak formulation of the eigenvalue problem \eqref{I1} for $h$ and $\omega^2$ is given by 
\begin{equation}
    \label{H5}
B_{k}(u,v)=\omega^2(\rho u,v)\quad \text{for all}\quad v\in  H^1_{\#}(\alpha, Y)^3
\end{equation}
where $B_{k}:H^1_{\#}(\alpha, Y)^3\times H^1_{\#}(\alpha, Y)^3\mapsto\mathbb{C} $ is the sesquilinear form given by
\begin{equation}
\label{H6}
\begin{aligned}
B_{k}(u,v)&=\int_{Y}{\mathbf{C}(x)\mathcal{E}( u):\overline{\mathcal{E}( v)}\; dx}\\
&=k\int_{Y\setminus D}\mathbf{C}^1\mathcal{E}( u):\overline{\mathcal{E}(v)}\; dx+\int_{ D}\mathbf{C}^1\mathcal{E}(u):\overline{\mathcal{E}(v)}\; dx.
\end{aligned}
\end{equation}
Let $T_k^{\alpha}:H^1_{\#}(\alpha, Y)^3\mapsto H^1_{\#}(\alpha, Y)^3$ be the associated linear operator such that 
\begin{equation}
\label{H7}
\langle T_k^{\alpha}u,v\rangle=B_k(u,v)\; \text{for all}\; v\in H^1_{\#}(\alpha, Y)^3.
\end{equation}
Hence from \eqref{H5} the eigenvalue problem becomes finding the pair $\omega^2,u$ such that $\omega^2>0$ and  $u\in H^1_{\#}(\alpha, Y)^3$ for which
\begin{equation}
    \label{weakspectrum}
\langle T_k^{\alpha}u,v\rangle=\omega^2(\rho u,v).
\end{equation}
Now we identify the operator associated with this eigenvalue problem.
Let $F(v):H^1_{\#}(\alpha, Y)^3\mapsto\mathbb{C}$ be the linear functional such that $F(v)=(\rho u,v)$ for fixed $u\in H^1_{\#}(\alpha, Y)^3$. Then by the Riesz Representation Theorem, there is a unique $z_{\rho u}\in H^1_{\#}(\alpha, Y)^3$ such that
\begin{equation}\label{weaksolution}
\langle z_{u\rho },v\rangle=F(v)=(\rho u,v)\; \text{for all}\; v\in H^1_{\#}(\alpha, Y)^3.
\end{equation}
Let $-\mathcal{L}_\alpha$ be the Lam\'e operator associated with the bilinear form $\langle\cdot,\cdot\rangle$ defined on $H^1_\#(\alpha,Y)^3.$
so
\begin{equation*}\label{rep}
z_{\rho u}=-\mathcal{L}_\alpha^{-1}\rho u,
\end{equation*}
and
\[
\langle T_k^{\alpha}u,v\rangle=\omega^2\langle z_{\rho u},v\rangle=\omega^2\langle -\mathcal{L}_\alpha^{-1}\rho u,v\rangle\; \text{for all}\; v\in H^1_{\#}(\alpha, Y)^3,
\]
or equivalently
\[
T_k^{\alpha}u=-\omega^2\mathcal{L}_\alpha^{-1}\rho u\;\; \text{as elements of}\; H^1_{\#}(\alpha, Y)^3.
\]
Now we aim to find the $k$ values for a given $\alpha$ such that $(T_k^{\alpha})^{-1}$ exists and write
\begin{equation}
\label{a-1}
\frac{1}{\omega^2}u=(T_k^{\alpha})^{-1}(-\mathcal{L}_\alpha)^{-1}\rho u.
\end{equation}
This is equivalent to solving the original eigenvalue problem \eqref{I1} on writing
\begin{equation}
\label{representation}
    -\nabla\cdot (\mathbf{C}(x)\mathcal{E}u)=-\mathcal{L}_\alpha T_k^{\alpha } u
\end{equation}

and noting
\begin{equation}
\label{a-2}
\mathcal{L}_\alpha T_k^{\alpha}u=\omega^2  \rho u\;\text{as elements of}\;L^2_{\#}(\alpha,Y)^3.
\end{equation}
Thus we will consider the operator $(T_k^{\alpha})^{-1}(-\mathcal{L}_\alpha)^{-1}\rho$ of $\eqref{a-1}$ and show that for a given subset of $k$ in $\mathbb{C}$ it is a bounded operator from $L^2_{\#}(\alpha, Y)^3$ to $H^1_{\#}(\alpha, Y)^3$. 
In order to accomplish this we express $(T_k^{\alpha})$ explicitly and discern the values $k$ in $\mathbb{C}$ for which $(T_k^{\alpha})$ is invertible. 

We start by decomposing  $H^1_{\#}(\alpha, Y)^3$ into invariant subspaces of source free modes associated with a quasi-periodic resonance spectra. This decomposition provides the explicit spectral representation for the operator $(T_k^{\alpha})$, see Theorem \ref{Ht2.5}. Consider the quasi-periodic case given by $\alpha\in Y^*\setminus\{0\}$. Set {$W^\alpha_1=\{u\in H^1_{\#}(\alpha, Y)^3:\mathcal{E}(u)=0\; \text{in}\; D\}$} and {$W^\alpha_2=\{u\in H^1_{\#}(\alpha, Y)^3:\mathcal{E}(u)=0\; \text{in}\; Y\setminus D\}$}. One checks that these spaces are orthogonal in the $\langle\cdot,\cdot\rangle$ inner product. We define $W^\alpha_3:=(W^\alpha_1\oplus W^\alpha_2)^\perp$ and
\begin{equation}
\label{H8}
H^1_{\#}(\alpha, Y)^3=W^\alpha_1\oplus W^\alpha_2\oplus W^\alpha_3.
\end{equation}

Now consider  $\alpha=0$ and decompose $H^1_{\#}(0, Y)^3$. Set {$W^0_1=\{u\in H^1_{\#}(0, Y)^3:\mathcal{E}(u)=0\; \text{in}\; D\}$} and $W^0_2=\{u\in H^1_{\#}(0, Y)^3:\mathcal{E}(u)=0\; \text{in}\; Y\setminus D\}$. One checks that these spaces are orthogonal in the $\langle\cdot,\cdot\rangle$ inner product. Set $\langle\rho\rangle=\int_Y\,\rho\,dx$, then one also has the equivalent representation of $W^0_2$ given by
\begin{lemma}
\label{W2}
The subspace $W^0_2$  of  $H^1_{\#}(0, Y)^3$ has the representation
\begin{equation}
    \label{H9}
    W^0_2=\{u=\tilde{u}-\langle \rho\rangle^{-1}\int_{D}\rho^1\,\tilde{u}\,dx\;1_Y\;|\;\tilde{u}\in \tilde{H}^1_0(D)^3\}
\end{equation}
where $\tilde{H}^1_0(D)^3$ is the subspace of $H_0^1(Y)^3$ given by all $H^1_0(D)^3$  functions extended by zero into $Y\setminus D$ and and $1_Y$ is the indicator function of $Y$.
\end{lemma}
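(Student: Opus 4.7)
The plan is to exploit the rigidity consequence of $\mathcal{E}(u)=0$ together with periodicity to show every element of $W^0_2$ is determined by an $H^1_0(D)^3$ bulk part plus a constant needed to satisfy the mean-zero density condition, then verify the two-way inclusion.

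First I would argue that if $u\in W^0_2$, then $u$ is identically equal to some constant vector $c\in\mathbb{C}^3$ on $Y\setminus D$. Since $u\in H^1_{\text{loc}}(\mathbb{R}^3)^3$ is periodic and $\mathcal{E}(u)=0$ holds distributionally in $Y\setminus D$, the same holds on the connected open set $\mathbb{R}^3\setminus\Omega$. By the classical rigidity result for Korn's inequality (i.e., $\mathcal{E}(u)=0$ on a connected Lipschitz domain implies $u$ is a rigid motion $a+Bx$ with $B$ skew), $u$ is a rigid motion on $\mathbb{R}^3\setminus\Omega$. Periodicity of $u$ forces $B=0$ and identifies the translational part with a constant $c\in\mathbb{C}^3$; in particular $u\equiv c$ on $Y\setminus D$.

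Next, set $\tilde{u}:=u-c\,1_Y$. Then $\tilde{u}\equiv 0$ on $Y\setminus D$ and $\tilde{u}|_D\in H^1(D)^3$ with zero trace on $\partial D$ (because $u$ is $H^1$ across $\partial D$ and matches $c$ on the outside), so $\tilde{u}\in \tilde{H}^1_0(D)^3$. Now impose the mean-density constraint $\int_Y\rho\,u\,dx=0$ that defines $H^1_\#(0,Y)^3$:
\begin{equation*}
0=\int_D\rho^1(\tilde{u}+c)\,dx+\int_{Y\setminus D}\rho^2 c\,dx =\int_D\rho^1\tilde{u}\,dx+c\langle\rho\rangle,
\end{equation*}
which uniquely determines $c=-\langle\rho\rangle^{-1}\int_D\rho^1\tilde{u}\,dx$. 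This shows $u$ has exactly the claimed form, establishing the inclusion $\subseteq$.

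For the converse, given any $\tilde{u}\in\tilde{H}^1_0(D)^3$, the function $u=\tilde{u}-\langle\rho\rangle^{-1}(\int_D\rho^1\tilde{u}\,dx)\,1_Y$ is periodic, lies in $H^1_{\text{loc}}(\mathbb{R}^3)^3$, is constant on $Y\setminus D$ so that $\mathcal{E}(u)=0$ there, and by construction satisfies $\int_Y\rho u\,dx=0$; hence $u\in W^0_2$. The main subtlety (and the only step that is not bookkeeping) is the rigidity-plus-periodicity argument in the first paragraph; care must be taken that the Korn rigidity is applied to the periodic extension on the connected exterior phase $\mathbb{R}^3\setminus\Omega$, rather than separately on potentially disconnected components of $Y\setminus D$, so that the compatibility forced by periodicity eliminates the skew and translational degrees of freedom except for a single global constant.
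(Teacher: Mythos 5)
Your proof is correct and follows essentially the same route as the paper's: rigidity of $\mathcal{E}(u)=0$ plus periodicity forces $u$ to equal a constant $c$ on $Y\setminus D$, then the mean-density constraint pins down $c$ in terms of $\tilde u$. You spell out the converse inclusion and the care needed in applying Korn rigidity on a connected set, neither of which the paper makes explicit, but these are refinements of the same argument rather than a different approach.
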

\noindent This Lemma is proved in the Appendix.
Clearly $W^0_1$ and $W^0_2$ are orthogonal subspaces of $ H^1_{\#}(0, Y)^3$ and define $W^0_3:=(W^0_1\oplus W^0_2)^\perp$ and
\begin{equation}
\label{H10}
H^1_{\#}(0, Y)^3=W^0_1\oplus W^0_2\oplus W^0_3.
\end{equation}

With these definitions in hand we write $W^\alpha_1$, $W^\alpha_2$, $W^\alpha_3$ for all $\alpha\in Y^\ast$.
To set up the spectral analysis we  observe that orthogonality and integration by parts shows that 
for $u\in W^\alpha_3$, 
\begin{equation}\label{harmonic}
    \mathcal{L}u=0
\end{equation}
separately in $D$ and $Y\setminus D$, for all $\alpha\in Y^\ast$ and this
implies that elements of $W^\alpha_3$ can be represented in terms of single layer potentials supported on $\partial D$. We introduce the 3-dimensional $\alpha$-quasi-periodic Green's function
\begin{equation}
\label{H11}
\mathbf{G}^{\alpha}(x,y)=\frac{1}{\mu_1}\sum_{n\in\mathbb{Z}^3}e^{i(2\pi n+\alpha)\cdot(x-y)}\Big(\frac{-\delta_{ij}}{|2\pi n+ \alpha|^2}+\frac{\lambda_1+\mu_1}{\lambda_1+2\mu_1}\frac{(2\pi n+ \alpha)_i(2\pi n+ \alpha)_j}{|2\pi n+ \alpha|^4}\Big),
\end{equation}
and for $\alpha=0$ the periodic Green's function by 
\begin{equation}
\label{H12}
\mathbf{G}^0(x,y)=\frac{1}{\mu_1}\sum_{n\in\mathbb{Z}^3\setminus \{0\}}e^{i(2\pi n)\cdot(x-y)}\Big(\frac{-\delta_{ij}}{|2\pi n|^2}+\frac{\lambda_1+\mu_1}{\lambda_1+2\mu_1}\frac{4\pi^2 n_in_j}{|2\pi n|^4}\Big)\;\text{for}\;\alpha=0.
\end{equation}
Let $H^{1/2}(\partial D)^3$ be the fractional Sobolev space on $\partial D$ with dual  $(H^{1/2}(\partial D)^3)^*=H^{-1/2}(\partial D)^3$. For $\phi\in H^{-1/2}(\partial D)^3$, and $\alpha\in Y^*$ define the single layer potential $\mathcal{S}^\alpha_{D}[\phi](x)$  associated with the Lam\'e system
\begin{equation}
\label{H13}
\mathcal{S}^\alpha_{D}[\phi](x):=\int_{\partial D}\mathbf G^{\alpha}(x,y)\phi(y)\;ds(y)\:\:,
x\in Y.
\end{equation}
It follows from \cite{AmmariKangLee}, for any $\phi\in H^{-1/2}(\partial D)^3$
\begin{equation}
\label{H14}
\begin{aligned}
\mathcal{L}\mathcal{S}^\alpha_D\phi&=0\; \text{in}\;D\; \text{and}\; Y\setminus D,\\
\mathcal{S}^\alpha_D\phi|_{\partial D}^- &=\mathcal{S}^\alpha_D\phi|_{\partial D}^+,\\
\frac{\partial}{\partial_\nu}\mathcal{S}^\alpha_D\phi|_{\partial D}^{\pm}&=(\pm\frac{1}{2}I+\mathcal{(\tilde{K}^{-\alpha})^*})[\phi],
\end{aligned}
\end{equation}
where $\nu$ is the outward unit normal to $\partial D$ and $(\tilde{\mathcal{K}}^{-\alpha}_D)^*$ is the Neumann Poincar\'e operator defined by
\begin{equation}
\label{H15}
(\tilde{\mathcal{K}}^{-\alpha}_D)^*[\phi](x)=p.v\int_{\partial D}\partial_{\nu_{x}}\mathbf{G}^\alpha(x-y)\phi(y)\;ds(y),\;\; x\in\partial D
\end{equation}
where $\tilde{\mathcal{K}}^{\alpha}_D$ is the Neumann Poincar\'e operator
\begin{equation}
\label{H16}
\tilde{\mathcal{K}}^{\alpha}_D[\phi](x)=p.v\int_{\partial D}\partial_{\nu_{y}}\mathbf{G}^\alpha(x-y)\phi(y)\;ds(y),\;\; x\in\partial D.
\end{equation}
Define $\mathcal{S}^\alpha_{\partial D}\phi=\mathcal{S}^\alpha_D\phi|_{\partial D}$ for all $\phi\in H^{-1/2}(\partial D)^3$, then we have the following Lemma.
\begin{lemma}
$\mathcal{S}^\alpha_{\partial D}:H^{-1/2}(\partial D)^3\mapsto H^{1/2}(\partial D)^3$ is invertible. 
\end{lemma}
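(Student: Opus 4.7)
The plan is to establish invertibility via the Fredholm alternative: show that $\mathcal{S}^\alpha_{\partial D}$ is Fredholm of index zero and then prove injectivity. For the Fredholm property, I would decompose the kernel of the quasi-periodic Green's function $\mathbf G^\alpha(x,y)$ into the free-space Kelvin-Somigliana kernel plus a remainder. The free-space single layer $\mathcal{S}_{\partial D}: H^{-1/2}(\partial D)^3 \to H^{1/2}(\partial D)^3$ is known to be invertible in three dimensions (see Kupradze or Ammari-Kang-Lee), and the remainder has a smooth kernel on $\partial D \times \partial D$ since only the singular frequency contribution produces the Cauchy-type singularity. Hence the remainder term is compact, and $\mathcal{S}^\alpha_{\partial D}$ is a compact perturbation of an invertible operator, giving Fredholm index zero.

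For injectivity, suppose $\phi \in H^{-1/2}(\partial D)^3$ with $\mathcal{S}^\alpha_{\partial D}[\phi] = 0$ on $\partial D$, and let $u(x) := \mathcal{S}^\alpha_D[\phi](x)$ for $x \in Y$. By the properties in \eqref{H14}, $u$ is $\alpha$-quasi-periodic, satisfies $\mathcal{L}u = 0$ separately in $D$ and in $Y \setminus D$, is continuous across $\partial D$, and vanishes on $\partial D$. Uniqueness for the interior Dirichlet problem of the Lamé system on $D$ immediately yields $u \equiv 0$ in $D$.

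In $Y \setminus D$, $u$ is an $\alpha$-quasi-periodic solution of $\mathcal{L}u = 0$ with vanishing Dirichlet data on $\partial D$. Integration by parts against $u$ on $Y \setminus D$ produces no contribution from the outer cell faces because boundary terms on opposite faces cancel by $\alpha$-quasi-periodicity, and no contribution from $\partial D$ since $u = 0$ there. Therefore $\int_{Y \setminus D} \mathbf C^1 \mathcal{E}(u) : \overline{\mathcal{E}(u)}\, dx = 0$, and the ellipticity \eqref{ellip} forces $\mathcal{E}(u) = 0$ in $Y \setminus D$. For $\alpha \neq 0$, the space $H^1_\#(\alpha,Y)^3$ contains no nonzero rigid motions, so $u \equiv 0$ in $Y \setminus D$; for $\alpha = 0$, any rigid motion that vanishes on the (nontrivial) surface $\partial D$ is already zero, reaching the same conclusion. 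Applying the jump relation in \eqref{H14}, $\phi = \partial_\nu u|^+_{\partial D} - \partial_\nu u|^-_{\partial D} = 0$, which establishes injectivity; combined with the Fredholm index zero property, this yields invertibility.

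The main obstacle I anticipate is the careful treatment of the $\alpha = 0$ periodic Green's function \eqref{H12}, where the $n = 0$ mode has been deleted and $u$ must be interpreted modulo a constant. One has to verify that the resulting representation and jump formulas of \eqref{H14} remain valid in this quotient setting so that the energy and jump argument carry through unchanged. The compactness of the correction term in the Fredholm step likewise requires showing that the subtracted constant mode does not spoil the smoothness of the kernel, which follows because the excluded term is itself a constant tensor and contributes only to the range via a finite-dimensional perturbation.
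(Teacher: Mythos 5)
Your injectivity argument matches the paper's almost verbatim: set $u=\mathcal{S}^\alpha_D[\phi]$, use uniqueness for the interior Dirichlet problem in $D$, an energy argument with quasi-periodicity in $Y\setminus D$, and then read $\phi$ off the Neumann jump. Where you diverge is the second half. The paper does not invoke Fredholm theory at all; it proves surjectivity directly. Given $g\in H^{1/2}(\partial D)^3$ it uses that the trace map $G:W^\alpha_3\to H^{1/2}(\partial D)^3$ is onto to produce $u\in W^\alpha_3$ with $Gu=g$, sets $\phi_u=\partial_n u|_{\partial D^+}-\partial_n u|_{\partial D^-}$ and $w=\mathcal{S}^\alpha_D[\phi_u]$, and then shows $l:=u-w$ solves a homogeneous transmission problem whose energy vanishes, so $l$ is a rigid motion and hence zero in $H^1_\#(\alpha,Y)^3$; thus $\mathcal{S}^\alpha_{\partial D}[\phi_u]=g$. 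Your route instead passes through the kernel splitting $\mathbf{G}^\alpha=\mathbf{\Gamma}+\mathbf{F}$, invertibility of the free-space Kelvin--Somigliana single layer in $\mathbb{R}^3$, and compactness of the smooth remainder $\mathbf{F}$ on $\partial D\times\partial D$, giving Fredholm index zero; injectivity then forces invertibility. Both proofs are sound. The paper's is self-contained within its Hilbert space framework and does not require the (true but external) input that the free-space $3$D Lam\'e single layer is invertible; your proof imports that fact but is closer to the standard layer-potential playbook and makes the structure $\mathcal{S}^\alpha_{\partial D}=\mathcal{S}_{\partial D}+\text{compact}$ explicit, which the paper uses elsewhere anyway (in its polynomial compactness discussion of the Neumann--Poincar\'e operator). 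Your caveat about the $\alpha=0$ case and the deleted $n=0$ mode is a genuine subtlety --- for $\alpha=0$ one has $\mathcal{L}\mathcal{S}^0_D[\phi]=-\int_{\partial D}\phi\,ds$ rather than $0$, so \eqref{H14} as stated needs a supplement there --- but this affects the paper's proof equally and both arguments are clean for $\alpha\neq 0$.
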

\begin{proof}
We show that $Ker\{\mathcal{S}^\alpha_{\partial D}\}=\{0\}$. Let $\phi\in H^{-1/2}(\partial D)^3$ and suppose $\mathcal{S}^\alpha_{\partial D}\phi=0$. Set $u=\mathcal{S}^\alpha_D\phi$.  Then $u\in W^\alpha_3$, and satisfy $\mathcal{L}u=0,\; \text{in}\; D$ with $u|_{\partial D^-}=0$. Hence we have $u=0$ in $D$. Likewise since $\mathcal{L}u=0,\; \text{in}\; Y\setminus D$ and $u|_{\partial D^+}=0$ with quasi-periodic boundary conditions on $\partial Y$, we conclude $u=0$ in $Y\setminus D$. Then since $\phi=\partial_nu|_{\partial D^+}-\partial_nu|_{\partial D^-}$, it yields $\phi=0$. To show the surjectivity of $\mathcal{S}^\alpha_{\partial D}$, let $g\in H^{1/2}(\partial D)^3$. Since the trace map $G:W^\alpha_3\mapsto H^{1/2}(\partial D)^3$ is onto, there exists $u\in W^\alpha_3$ such that $Gu=g$. Define $\phi_u:=\partial_nu|_{\partial D^+}-\partial_nu|_{\partial D^-}$ and $w(x):=\mathcal{S}^\alpha_{D}[\phi_u](x)=\int_{\partial D}\mathbf G^{\alpha}(x,y)\phi_u(y)\;ds(y)$. Thus $u,w\in W^\alpha_3$ and $l:=u-w$ satisfy
\begin{align*}
    \mathcal{L}l &=0\; \text{in}\; D\;\text{and}\; Y\setminus D,\\
    l|_{\partial D^-}&=  l|_{\partial D^+},\\
    n\cdot\mathbf{C}^1\mathcal{E} (l)|_{\partial D^-}&=n\cdot\mathbf{C}^1\mathcal{ E}(l)|_{\partial D^+}.
\end{align*}
Then we see that
\[
\int_Y \mathbf{C}^1\mathcal{E}(l):\mathcal{E}(l)\;dx=0.
\]
This implies that $l$ is a rigid body motion and therefore we conclude that $l=0$. Hence $u=w$.
\end{proof}
Let $\mathbf{\Gamma}=(\Gamma_{j,k})_{j,k=1}^3$ denote the Kelvin matrix associated with the fundamental solution of the Lam\'e operator and has the following representation 
\begin{equation}
\label{H17}
\Gamma_{j,k}(x)=-\frac{b_1\delta_{jk}}{4\pi|x|}-\frac{b_2}{4\pi}\frac{x_jx_k}{|x|^3}
\end{equation}
with 
\begin{equation}
\label{H18}
b_1:=\frac{1}{2}\Big(\frac{1}{\mu_1}+\frac{1}{2\mu_1+\lambda_1}\Big)\; \text{and}\;\; b_2:=\frac{1}{2}\Big(\frac{1}{\mu_1}-\frac{1}{2\mu_1+\lambda_1}\Big).
\end{equation}
We define $\mathbf{F}(x,y):=\mathbf{G}^\alpha(x,y)-\mathbf{\Gamma}(x,y)$. One has the identities
\begin{equation}
\label{H19}
\mathcal{L}\mathbf{G}^\alpha(x,y) =\sum_{n\in\mathbb{Z}^3}\delta(x-y-n)I\; , \alpha\neq 0,
\end{equation}
\begin{equation}
\label{H20}
\mathcal{L}\mathbf{G}^0(x,y) =\sum_{n\in\mathbb{Z}^3}\delta(x-y-n)I-I.
\end{equation}

We also know that
\begin{equation}
\label{a1}
-\mathcal{L}\mathbf{\Gamma}(x,y)=\delta(x-y)I.
\end{equation}
Thus on $Y$ for $\alpha\neq 0$, we have 
\begin{equation}
\label{a2}
\begin{cases}
-\mathcal{L}\mathbf{F}(x,y)=0\;\text{for}\;x\;\text{in}\;Y\\

\mathbf{F}(x,y)|_{\partial Y}\text{is continuous.}
\end{cases}
\end{equation}
\\
This shows that $\mathbf{F}$ satisfies the homogeneous Lam\'e equation on $Y$ and we form
\begin{equation}
\label{a3}
\partial_{\nu_x}\mathbf{G}^\alpha(x,y)=\partial_{\nu_x}\mathbf{\Gamma}(x,y)+\partial_{\nu_x}\mathbf{F}(x,y).
\end{equation}
So the Neumann Poincar\'e operator has the equivalent representation
\begin{align}\label{hilbertkernel}
(\tilde{\mathcal{K}}^{-\alpha}_D)^*[\phi](x)=p.v\int_{\partial D}\partial_{\nu_{x}}\mathbf{G}^\alpha(x,y)\phi(y)\;ds(y)&=p.v\{\int_{\partial D}\partial_{\nu_{x}}\mathbf{\Gamma}{(x,y)}\phi(y)\;ds(y)\\
&+\int_{\partial D}\partial_{\nu_{x}}\mathbf{F}(x,y)\phi(y)\;ds(y)\}.\nonumber
\end{align}

Recall the identity
\begin{equation}
\label{H21}
\partial_{\nu_{x}}\mathbf{\Gamma(x,y)}=k_0\mathbf{K}_1(x,y)+\mathbf{K}_2(x,y)
\end{equation}

where 
\begin{equation}
\label{H22}
k_0=\frac{-\mu_1}{2(2\mu_1+\lambda_1)},
\end{equation}

\begin{align}
\label{H23}
\mathbf{K_1}(x,y)&=\frac{n_x(x-y)^T-(x-y)n_x^T}{2\pi|x-y|^3},\\
\label{H24}
\mathbf{K_2}(x,y)&=\frac{\mu_1}{2\mu_1+\lambda_1}\frac{(x-y)\cdot n_y}{4\pi|x-y|^3}I+\frac{2(\mu_1+\lambda_1)}{2\mu_1+\lambda_1}\frac{(x-y)\cdot n_y}{4\pi|x-y|^5}(x-y)(x-y)^T.
\end{align}
Here $I$ is the $3\times 3$ identity matrix.
Then by substituting this to \eqref{a3} yields
\begin{equation}
\label{a4}
\partial_{\nu_x}\mathbf{G}^\alpha(x,y)=k_0\mathbf{K}_1(x,y)+\mathbf{K}_2(x,y)+\partial_{\nu_x}\mathbf{F}.
\end{equation}
We now argue as in the fundamental paper  \cite{AndoKangMiyanishi}. Define 
\begin{equation}
\label{H25}
\mathbf{T}[\phi](x)=p.v\int_{\partial D}\mathbf{K}_1(x,y)\phi(y)\;ds(y),\;\;x\in \partial D.
\end{equation}
Since $\mathbf{K}_2$ and $\mathbf{F}$ satisfy the weakly singular conditions $|\mathbf{K}_2(x,y)|\leq C|x-y|^{-1}$ and $|\partial_{n(x)}\mathbf{F}(x,y)|\leq C|x-y|^{-1}$ the integral operators  $\int_{\partial D}\mathbf{K}_2(x,y)\phi(y)\;ds(y)$ and $\int_{\partial D}\partial_{n(x)}\mathbf{F}_2(x,y)\phi(y)\;ds(y)$ are compact on $H^{-1/2}(\partial D)^3$. Therefore we write
\[
(\tilde{\mathcal{K}}^{-\alpha}_D)^*[\phi](x)=k_0\mathbf{T}[\phi]+\text{compact operator}.
\]
so $(\tilde{\mathcal{K}}^{-\alpha}_D)^*-k_0\mathbf{T}$ is compact on $Y$ as elements of $H^{-1/2}(\partial D)^3$ to conclude that 
\[
p_3((\tilde{\mathcal{K}}^{-\alpha}_D)^*):=((\tilde{\mathcal{K}}^{-\alpha}_D)^*)^3-k_0^2(\tilde{\mathcal{K}}^{-\alpha}_D)^*,
\]
is compact.

The spectrum of $(\tilde{\mathcal{K}}^{-\alpha}_D)^*$ is denoted by $\sigma((\tilde{\mathcal{K}}^{-\alpha}_D)^*)$. We conclude  using the spectral mapping theorem  that $p_3(\sigma((\tilde{\mathcal{K}}^{-\alpha}_D)^*)))=\sigma(p_3((\tilde{\mathcal{K}}^{-\alpha}_D)^*))$. Since $p_3((\tilde{\mathcal{K}}^{-\alpha}_D)^*)$ is compact, $p_3(\sigma((\tilde{\mathcal{K}}^{-\alpha}_D)^*))$ consists of eigenvalues (of finite multiplicities) converging to $0$.
Let $\{r_n(\alpha)\}$ be the set of eigenvalues of $p_3((\tilde{\mathcal{K}}^{-\alpha}_D)^*)$ and suppose $\zeta\in\sigma((\tilde{\mathcal{K}}^{-\alpha}_D)^*)$. Then for each $n$, we have the equation\[ \zeta^3-k_0^2\zeta=r_n(\alpha).\]
Now set $\gamma_n^\pm=\Big[\frac{1}{2}(-27r_n(\alpha)\pm \sqrt{729r^2_n+108k^6_0})\Big]^{1/3}$. By solving this cubic polynomial we get three roots and denote them by $\zeta_{n_1}, \zeta_{n_2}, \zeta_{n_3}$ where
\begin{equation}
  \label{roots}  
\begin{aligned}
\zeta_{n_1}&=-\frac{1}{3}\gamma_n^{+}-\frac{1}{3}\gamma_n^{-}\\
\zeta_{n_2}&=\frac{1-i\sqrt 3}{6}\gamma_n^{+}+\frac{1+i\sqrt 3}{6}\gamma_n^{-}\\
\zeta_{n_3}&=\frac{1+i\sqrt 3}{6}\gamma_n^{+}+\frac{1-i\sqrt 3}{6}\gamma_n^{-}.
\end{aligned}
\end{equation}

Thus for each $n$ we get $3$ different eigenvalues and therefore we will have $3$ different sequences of eigenvalues with accumulation points $0, k_0$ and $-k_0$.
We emphasize that while not indicated explicitly these eigenvalues depend on $\alpha\in Y^\ast$.  We summarize the results below.

\begin{lemma}
\label{neumannPoincareSpectra}
The point spectra of $(\tilde{\mathcal{K}}^{-\alpha}_D)^*$ is given by the three sequences of eigenvalues given by \eqref{roots} and the sequences converge to the three accumulation points $0$, $k_0$, and $-k_0$.
\end{lemma}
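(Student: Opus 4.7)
The plan is to formalize the argument sketched in the paragraph immediately preceding the statement. First, I would record the decomposition already derived from \eqref{a4}, writing
\[
(\tilde{\mathcal{K}}^{-\alpha}_D)^* = k_0\,\mathbf{T} + \mathcal{R},
\]
where $\mathcal{R}$ gathers the contributions of $\mathbf{K}_2$ and $\partial_{\nu_x}\mathbf{F}$. Both of these kernels satisfy a weak singularity estimate of the form $|\cdot|\leq C|x-y|^{-1}$, so the standard theory of weakly singular integral operators on smooth surfaces gives that $\mathcal{R}$ is compact on $H^{-1/2}(\partial D)^3$. The smoothness of $\mathbf{F}$ (which solves the homogeneous Lam\'e equation on $Y$) makes $\partial_{\nu_x}\mathbf{F}$ no worse than the Kelvin part in this regard, and the argument is uniform in $\alpha\in Y^\ast$.

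Second, I would establish that $p_3((\tilde{\mathcal{K}}^{-\alpha}_D)^*)=((\tilde{\mathcal{K}}^{-\alpha}_D)^*)^3-k_0^2(\tilde{\mathcal{K}}^{-\alpha}_D)^*$ is compact. Expanding with $(\tilde{\mathcal{K}}^{-\alpha}_D)^* = k_0\mathbf{T}+\mathcal{R}$, every cross term contains a factor of $\mathcal{R}$ and is therefore compact; what remains is $k_0^3(\mathbf{T}^3-\mathbf{T})$. At this point I would invoke the key algebraic identity from \cite{AndoKangMiyanishi}, namely that on a $C^\infty$ boundary the free-space singular integral operator $\mathbf{T}$ satisfies $\mathbf{T}^3-\mathbf{T}$ compact on $H^{-1/2}(\partial D)^3$. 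This is exactly the polynomial-compactness assertion for the elastostatic Neumann--Poincar\'e operator and is imported as a black box.

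Third, with $p_3((\tilde{\mathcal{K}}^{-\alpha}_D)^*)$ compact, Riesz--Schauder gives its spectrum as a sequence $\{r_n(\alpha)\}$ of eigenvalues of finite multiplicity with the only possible accumulation point at $0$. The spectral mapping theorem then yields
\[
p_3(\sigma((\tilde{\mathcal{K}}^{-\alpha}_D)^*)) = \sigma(p_3((\tilde{\mathcal{K}}^{-\alpha}_D)^*)),
\]
so every $\zeta\in\sigma((\tilde{\mathcal{K}}^{-\alpha}_D)^*)$ is a root of the depressed cubic $\zeta^3-k_0^2\zeta=r_n(\alpha)$ for some $n$. Applying Cardano's formula produces the three explicit roots $\zeta_{n_1},\zeta_{n_2},\zeta_{n_3}$ listed in \eqref{roots}; sending $n\to\infty$ forces $r_n(\alpha)\to 0$, so each of the three branches converges to one of the roots of $\zeta^3-k_0^2\zeta=\zeta(\zeta-k_0)(\zeta+k_0)=0$, i.e.\ to $0$, $k_0$, $-k_0$. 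A brief continuity check on the Cardano expressions at $r=0$ confirms which branch limits to which accumulation point.

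The main obstacle is the polynomial-compactness identity $\mathbf{T}^3-\mathbf{T}$ compact; this is not a soft consequence of Calder\'on--Zygmund theory but rests on a delicate principal-symbol computation for the elastic double layer on a smooth surface, done in \cite{AndoKangMiyanishi}. Once it is cited, the remaining work (verifying compactness of $\mathcal{R}$ using \eqref{H23}--\eqref{H24} and the regularity of $\mathbf{F}$, then reading off the accumulation points from the cubic) is routine and essentially already contained in the text above the statement.
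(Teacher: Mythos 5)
Your argument matches the paper's approach exactly; the paper's proof is the paragraph immediately preceding the lemma, which decomposes $(\tilde{\mathcal{K}}^{-\alpha}_D)^*$ as $k_0\mathbf{T}$ plus a compact remainder built from $\mathbf{K}_2$ and $\partial_{\nu_x}\mathbf{F}$, invokes the polynomial compactness of the free-space singular part from \cite{AndoKangMiyanishi}, and then reads off the point spectrum via the spectral mapping theorem and Cardano's formula applied to $\zeta^3-k_0^2\zeta=r_n(\alpha)$. Your write-up is a bit more explicit about why $p_3(k_0\mathbf{T}+\mathcal{R})-k_0^3(\mathbf{T}^3-\mathbf{T})$ is compact (every cross term carries a factor of $\mathcal{R}$), a step the paper leaves implicit, but it is the same route.
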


Let $G:\,W^\alpha_3\rightarrow H^{-1/2}(\partial D)^3$ be the trace operator which is bounded one to one and onto.
\begin{lemma}
\label{Hl2.2}
 $\mathcal{S}^\alpha_D:H^{-1/2}(\partial D)^3\mapsto W^\alpha_3$ is one to one and onto and moreover $(\mathcal{S}^\alpha_D)^{-1}=(\mathcal{S}^\alpha)^{-1}_{\partial D} G$.
\end{lemma}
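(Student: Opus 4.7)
The plan is to verify three things in sequence: (i) $\mathcal{S}^\alpha_D[\phi]$ actually lies in $W^\alpha_3$, (ii) $\mathcal{S}^\alpha_D$ is injective, and (iii) it is surjective with the claimed inverse formula. All three steps rely on the invertibility of $\mathcal{S}^\alpha_{\partial D}$ established in the preceding lemma, together with uniqueness for the Lam\'e equation in each subregion under quasi-periodic boundary conditions.

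For (i), fix $\phi \in H^{-1/2}(\partial D)^3$ and set $u := \mathcal{S}^\alpha_D[\phi]$. Standard mapping properties of single layer potentials combined with the quasi-periodicity of $\mathbf{G}^\alpha$ place $u$ in $H^1_\#(\alpha,Y)^3$, and \eqref{H14} yields $\mathcal{L}u = 0$ in $D$ and in $Y\setminus D$ with common trace $Gu = \mathcal{S}^\alpha_{\partial D}[\phi] \in H^{1/2}(\partial D)^3$. By surjectivity of $G:W^\alpha_3 \to H^{1/2}(\partial D)^3$ noted just before the lemma, choose $w \in W^\alpha_3$ with $Gw = \mathcal{S}^\alpha_{\partial D}[\phi]$. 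Then $u-w$ is $\alpha$-quasi-periodic, satisfies $\mathcal{L}(u-w)=0$ in each subregion, and has zero trace on $\partial D$; testing against itself and applying Korn's inequality---exactly as in the vanishing step of the preceding lemma's proof---forces $u = w \in W^\alpha_3$. Claim (ii) is then immediate: if $\mathcal{S}^\alpha_D[\phi]=0$ in $Y$, then its trace $\mathcal{S}^\alpha_{\partial D}[\phi]=0$, and the preceding lemma forces $\phi=0$. For (iii), given $u \in W^\alpha_3$, define $\phi := (\mathcal{S}^\alpha_{\partial D})^{-1}Gu \in H^{-1/2}(\partial D)^3$; then $\mathcal{S}^\alpha_D[\phi]$ and $u$ share the boundary trace $Gu$ and both lie in $W^\alpha_3$, so the uniqueness argument from (i) applied to their difference yields $u=\mathcal{S}^\alpha_D[\phi]$. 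This simultaneously proves surjectivity and the explicit formula $(\mathcal{S}^\alpha_D)^{-1}=(\mathcal{S}^\alpha_{\partial D})^{-1}G$.

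The essential technical point is the quasi-periodic transmission uniqueness invoked in (i) and (iii): any $v \in H^1_\#(\alpha,Y)^3$ with $\mathcal{L}v=0$ in $D$ and in $Y\setminus D$ and zero trace on $\partial D$ must vanish. One obtains this via the energy identity $\int_Y \mathbf{C}^1\mathcal{E}(v):\overline{\mathcal{E}(v)}\,dx = 0$, where boundary terms on $\partial Y$ cancel by $\alpha$-quasi-periodicity and those on $\partial D$ vanish by the zero trace; Korn's inequality combined with the fact that $H^1_\#(\alpha,Y)^3$ contains no rigid motions then gives $v \equiv 0$. Once this uniqueness is available, the rest of the argument is a one-line composition of the already-established invertibility of $\mathcal{S}^\alpha_{\partial D}$ with the trace operator $G$.
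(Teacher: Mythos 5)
Your proof is correct, but it takes a genuinely different route from the paper for the key surjectivity step. The paper argues as follows: given $u \in W^\alpha_3$, set $w = \mathcal{S}^\alpha_D(\mathcal{S}^\alpha_{\partial D})^{-1}Gu$; then $u - w \in W^\alpha_3$ and $G(u-w) = 0$, and a vanishing trace on $\partial D$ places $u-w$ in $W^\alpha_1 \oplus W^\alpha_2$ (the difference can be split by extending its restriction to each subregion by zero, each piece landing in $W^\alpha_1$ or $W^\alpha_2$); since $W^\alpha_3 = (W^\alpha_1 \oplus W^\alpha_2)^\perp$, the difference vanishes. You instead invoke a transmission uniqueness theorem: any $v \in H^1_\#(\alpha,Y)^3$ satisfying $\mathcal{L}v = 0$ in $D$ and in $Y\setminus D$ with $v|_{\partial D}=0$ must vanish, proved via the energy identity and absence of rigid motions in $H^1_\#(\alpha,Y)^3$. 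Both arguments are sound. The paper's is slicker and stays strictly within the orthogonal-decomposition framework of the section; yours is more elementary (plain energy estimates plus Korn), avoids the implicit extension-by-zero step the paper uses without comment, and has the side benefit of being reusable as a named uniqueness lemma. You are also more explicit than the paper about verifying that the range of $\mathcal{S}^\alpha_D$ actually lies in $W^\alpha_3$ --- the paper simply asserts this; your step (i) supplies the argument.
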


\begin{proof}

Let $\phi\in H^{-1/2}(\partial D)^3$.  As shown earlier $\mathcal{S}^\alpha_D:H^{-1/2}(\partial D)^3\mapsto W^\alpha_3$ has $Ker(\mathcal{S}^\alpha_D)=0$ so $\mathcal{S}^\alpha_D$ is one to one and $f=\mathcal{S}_D^\alpha\phi\in W^\alpha_3$ for every $\phi\in H^{-1/2}(\partial D)^3$. Now suppose $u\in W^\alpha_3$, and consider $Gu=u|_{\partial D}\in   H^{1/2}(\partial D)^3$. Define $w=\mathcal{S}^\alpha_D(\mathcal{S}^\alpha)^{-1}_{\partial D} Gu)$. Since $u, w\in W^\alpha_3$, we have $u-w\in W^\alpha_3$. Also $Gu=Gw$ and hence $G(u-w)=0$, so $w-u\in (W^\alpha_1\oplus W^\alpha_2)$. But $W^\alpha_3=(W^\alpha_1\oplus W^\alpha_2)^{\perp}$ and therefore $w=u$.
\end{proof}

We define an auxiliary operator $T: W^\alpha_3\mapsto W^\alpha_3$ such that
\begin{equation}
\label{H27}
\langle Tu,v\rangle =\frac{1}{2}\int_{Y\setminus D}\mathbf{C}^1\mathcal{E}(u):\overline{\mathcal{E}(v)}\; dx -\frac{1}{2}\int_{D}\mathbf{C}^1\mathcal{E}(u):\overline{\mathcal{E}(v)}\;dx.
\end{equation}
If $\xi$ is an eigenvalue of $T$, then there exists $u\in W^\alpha_3$ such that 
\begin{equation}
    \label{specq}
    Tu=\xi u.
\end{equation}
From $\eqref{H27}$, it is clear that,
\begin{align*}
-\frac{1}{2}\langle u,u\rangle &=-\frac{1}{2}\int_{{Y\setminus D}}\mathbf{C}^1\mathcal{E}(u):\overline{\mathcal{E}(u)}\;dx-\frac{1}{2}\int_{D}\mathbf{C}^1\mathcal{E}(u):\overline{\mathcal{E}(u)}\; dx \\
&\leq\langle Tu,u\rangle=\xi \langle u,u\rangle\\
&\leq \frac{1}{2}\int_{{Y\setminus D}}\mathbf{C}^1\mathcal{E}(u):\overline{\mathcal{E}(u)}\;dx+\frac{1}{2}\int_{D}\mathbf{C}^1\mathcal{E}(u):\overline{\mathcal{E}(u)}\;dx=\frac{1}{2}\langle u,u\rangle.
\end{align*}
Thus for any eigenvalue $\xi$ of $T$, we have the following.
\[
-\frac{1}{2}\leq\xi\leq\frac{1}{2}.
\]
The upper bound $1/2$ is the eigenvalue associated with the eigenspace $W_1^\alpha$.

The next theorem shows the relation between the auxiliary operator $T$ restricted to $W^\alpha_3$ and the elastic Neumann Poincar\'e  operator.

\begin{theorem}
\label{Ht2.3}
For $u\in W^\alpha_3$ the operator $T$ is given by 
\begin{equation}
\label{H28}
T=\mathcal{S}^\alpha_{D}(\tilde{\mathcal{K}}^{-\alpha}_{D})^*(\mathcal{S}_{D}^\alpha)^{-1}
\end{equation} and is self-adjoint.
\end{theorem}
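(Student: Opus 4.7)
The plan is to reduce everything to a single-layer potential computation on $\partial D$ and then read off the operator identity by testing against arbitrary $v \in W_3^\alpha$.

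First I would nail down the representation of an arbitrary $u\in W_3^\alpha$. Using that $W_3^\alpha$ is the $\langle\cdot,\cdot\rangle$-orthogonal complement of $W_1^\alpha\oplus W_2^\alpha$, I would test $\langle u, v\rangle = 0$ against $v\in H^1_0(D)^3\subset W_2^\alpha$ (extended by zero) and against $v\in H^1_\#(\alpha,Y)^3$ vanishing on $D$ (which lies in $W_1^\alpha$). Integration by parts then forces $\mathcal{L}u=0$ separately in $D$ and $Y\setminus D$, as already stated in \eqref{harmonic}. Combined with continuity of $u$ across $\partial D$ and $\alpha$-quasi-periodicity, Lemma \ref{Hl2.2} gives a unique $\phi\in H^{-1/2}(\partial D)^3$ with $u=\mathcal{S}_D^\alpha[\phi]$ and $\phi=(\mathcal{S}_D^\alpha)^{-1}u$.

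Next I would compute $\langle Tu,v\rangle$ for an arbitrary $v\in W_3^\alpha$ by integrating by parts in $D$ and in $Y\setminus D$ separately, using $\mathcal{L}u=0$ in each subregion and noting that the boundary contributions on $\partial Y$ cancel because $(\mathbf{C}^1\mathcal{E}(u))\nu\cdot\overline{v}$ is periodic while opposite faces carry opposite unit normals. Taking $\nu$ to be the outward normal to $D$ and using the continuity of $v$ across $\partial D$, this yields
\begin{equation*}
\langle Tu,v\rangle = -\tfrac{1}{2}\int_{\partial D}\bigl(\partial_\nu u|^+ + \partial_\nu u|^-\bigr)\cdot\overline{v}\,ds.
\end{equation*}
Then the jump relation \eqref{H14} gives $\partial_\nu u|^+ + \partial_\nu u|^- = 2(\tilde{\mathcal{K}}_D^{-\alpha})^*[\phi]$, so
\begin{equation*}
\langle Tu,v\rangle = -\int_{\partial D}(\tilde{\mathcal{K}}_D^{-\alpha})^*[\phi]\cdot\overline{v}\,ds.
\end{equation*}

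To identify the right-hand side as an inner product in $W_3^\alpha$, I would set $w:=\mathcal{S}_D^\alpha\bigl[(\tilde{\mathcal{K}}_D^{-\alpha})^*\phi\bigr]\in W_3^\alpha$ and repeat the same integration by parts for $\langle w,v\rangle$, this time applying the jump of the conormal derivative to $w$; the jump equals $(\tilde{\mathcal{K}}_D^{-\alpha})^*\phi$, which reproduces the identical surface integral. Therefore $\langle Tu,v\rangle = \langle w,v\rangle$ for every $v\in W_3^\alpha$, and the Riesz representation in the Hilbert space $W_3^\alpha$ gives $Tu = w = \mathcal{S}_D^\alpha(\tilde{\mathcal{K}}_D^{-\alpha})^*(\mathcal{S}_D^\alpha)^{-1}u$.

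Self-adjointness is essentially free from the defining form \eqref{H27}: because $\mathbf{C}^1$ has the full symmetries of an isotropic elasticity tensor, $\mathbf{C}^1\mathcal{E}(u):\overline{\mathcal{E}(v)} = \overline{\mathbf{C}^1\mathcal{E}(v):\overline{\mathcal{E}(u)}}$ pointwise, so conjugating and swapping $u,v$ in \eqref{H27} gives $\overline{\langle Tv,u\rangle}=\langle Tu,v\rangle$. The main obstacle I anticipate is purely bookkeeping: keeping the sign conventions for the outward normal to $D$ versus to $Y\setminus D$ consistent with the $\pm$ sign in the jump relation \eqref{H14}, and verifying carefully that the $\partial Y$ boundary terms really cancel under $\alpha$-quasi-periodicity (including the $\alpha=0$ case, where one also uses that $v$ is admissible in $W^0_3$). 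Once these sign choices are fixed coherently, the identity $T=\mathcal{S}_D^\alpha(\tilde{\mathcal{K}}_D^{-\alpha})^*(\mathcal{S}_D^\alpha)^{-1}$ drops out.
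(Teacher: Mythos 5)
Your proof is correct and follows essentially the same route as the paper's: both rest on integrating by parts separately over $D$ and $Y\setminus D$ and invoking the jump relations \eqref{H14} for the single-layer potential, with the only difference being direction (you expand $\langle Tu,v\rangle$ and build up to $\mathcal{S}_D^\alpha(\tilde{\mathcal{K}}_D^{-\alpha})^*(\mathcal{S}_D^\alpha)^{-1}u$, whereas the paper starts from $\langle\mathcal{S}_D^\alpha(\tilde{\mathcal{K}}_D^{-\alpha})^*(\mathcal{S}_D^\alpha)^{-1}u,v\rangle$ and reduces to $\langle Tu,v\rangle$). The self-adjointness observation via the symmetry of $\mathbf{C}^1$ is a correct and welcome addition the paper leaves implicit.
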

\begin{proof}
Let $u,v\in W^\alpha_3$. Consider
\begin{equation}
\label{H29}
    \langle \mathcal{S}^\alpha_{D}(\tilde{\mathcal{K}}^{-\alpha}_{D})^*(\mathcal{S}_{D}^\alpha)^{-1}u,v\rangle= \int_{Y}\mathbf{C}^1\mathcal{E}[\mathcal{S}^\alpha_{D}(\tilde{\mathcal{K}}^{-\alpha}_{D})^*(\mathcal{S}^\alpha_{D})^{-1}u]:\overline{\mathcal{E}(v)}\;dx
\end{equation}
\begin{equation*}
    \begin{aligned}
    &=\int_{Y\setminus D}\mathbf{C}^1\mathcal{E}[\mathcal{S}^\alpha_{D}(\tilde{\mathcal{K}}^{-\alpha}_{D})^*(\mathcal{S}_{D}^\alpha)^{-1}u]:\overline{\mathcal{E}(v)}\;dx\\
    &+\int_{D}\mathbf{C}^1\mathcal{E}[\mathcal{S}^\alpha_{D}(\tilde{\mathcal{K}}^{-\alpha}_{D})^*(\mathcal{S}_{D}^\alpha)^{-1}u]:\overline{\mathcal{E}(v)}\;dx.
    \end{aligned}
\end{equation*}
Since $\mathcal{L}\mathcal{S}^\alpha_D\phi=0$ in $D$ and $Y\setminus D$, for any $\phi\in H^{-1/2}(\partial D)^3$, using integration by parts gives
\[
 \langle \mathcal{S}^\alpha_{D}(\tilde{\mathcal{K}}^{-\alpha}_{D})^*(\mathcal{S}_{D})^\alpha)^{-1}u,v\rangle=\int_{\partial D}(n\cdot \mathbf{C}^1\mathcal{E}[\mathcal{S}^\alpha_{D}(\tilde{\mathcal{K}}^{-\alpha}_{D})^*(\mathcal{S}_{D}^\alpha)^{-1}u]|_{\partial D^-}-n\cdot \mathbf{C}^1\mathcal{E}[\mathcal{S}^\alpha_{D}(\tilde{\mathcal{K}}^{-\alpha}_{D})^*(\mathcal{S}_{D}^\alpha)^{-1}u]|_{\partial D^+})\overline{v}.
\]
Applying the jump conditions from $\eqref{H14}$ gives
\begin{equation}
\label{H30}
   \langle \mathcal{S}^\alpha_{D}(\tilde{\mathcal{K}}^{-\alpha}_{D})^*(\mathcal{S}_{D}^\alpha)^{-1}u,v\rangle=-\int_{\partial D} (\tilde{\mathcal{K}}^{-\alpha}_{D})^*(\mathcal{S}_{D}^\alpha)^{-1}u\overline{v}\;ds.
\end{equation}
The same jump conditions gives
\[
(\tilde{\mathcal{K}}^{-\alpha}_{D})^*[\phi]=\frac{1}{2}(\frac{\partial}{\partial_\nu}\mathcal{S^\alpha}_D\phi|_{\partial D^-}+\frac{\partial}{\partial_\nu}\mathcal{S}^\alpha_D\phi|_{\partial D^+})
\]
where $\phi=\mathcal({S}^\alpha_{D})^{-1}u$ . Thus $\eqref{H30}$ yields
\begin{equation}
\label{H31}
     \langle \mathcal{S}^\alpha_{D}(\tilde{\mathcal{K}}^{-\alpha}_{D})^*(\mathcal{S}_{D}^\alpha)^{-1}u,v\rangle=-\int_{\partial D}(\frac{1}{2}\frac{\partial}{\partial_\nu}u|_{\partial D^-}+\frac{1}{2}\frac{\partial}{\partial_\nu}u|_{\partial D^+})\overline{v}\;ds
\end{equation}
\begin{equation*}
    \begin{aligned}
    &=-\frac{1}{2}\int_{D}\mathbf{C}^1\mathcal{E}(u):\overline{\mathcal{E}(v)}\;dx-\frac{1}{2}(-\int_{Y\setminus D}\mathbf{C}^1\mathcal{E}(u):\overline{\mathcal{E}(v)}\;dx)\\
    &=\langle Tu,v\rangle.
    \end{aligned}
\end{equation*}
\end{proof}

\begin{lemma}
\label{Hl2.4}
When restricted to $W^\alpha_3$ the point spectrum of $T$ is given by the point spectrum of $(\tilde{\mathcal{K}}^{-\alpha}_{D})^*$ and the essential spectrum of $T$ is given by the accumulation points $0$, $k_0$, and $-k_0$ of the eigenvalues of operator $(\tilde{\mathcal{K}}^{-\alpha}_{D})^*$.
\end{lemma}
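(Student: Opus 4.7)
The plan is to leverage Theorem~\ref{Ht2.3} and Lemma~\ref{Hl2.2} to transfer spectral information from $(\tilde{\mathcal{K}}^{-\alpha}_D)^*$ on $H^{-1/2}(\partial D)^3$ to $T$ on $W^\alpha_3$ through the similarity
\[
T = \mathcal{S}^\alpha_D\,(\tilde{\mathcal{K}}^{-\alpha}_D)^*\,(\mathcal{S}^\alpha_D)^{-1},
\]
where $\mathcal{S}^\alpha_D : H^{-1/2}(\partial D)^3 \to W^\alpha_3$ is a bounded bijection by Lemma~\ref{Hl2.2}. Since similarity through a bounded invertible map preserves the spectrum and every component of its standard decomposition (point, continuous, residual, essential), the two operators are spectrally identical; what remains is to identify each piece with the explicit description supplied by Lemma~\ref{neumannPoincareSpectra} and the polynomial compactness observed just before it.

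For the point spectrum, I would argue directly: given an eigenpair $(\lambda,\phi)$ of $(\tilde{\mathcal{K}}^{-\alpha}_D)^*$, set $u = \mathcal{S}^\alpha_D\phi \in W^\alpha_3$; then
\[
T u \;=\; \mathcal{S}^\alpha_D(\tilde{\mathcal{K}}^{-\alpha}_D)^*(\mathcal{S}^\alpha_D)^{-1} u \;=\; \mathcal{S}^\alpha_D(\tilde{\mathcal{K}}^{-\alpha}_D)^*\phi \;=\; \lambda\,u,
\]
and $u \neq 0$ since $\mathcal{S}^\alpha_D$ is injective. Conversely, any eigenpair $(\lambda,u)$ of $T$ in $W^\alpha_3$ produces an eigenpair $(\lambda,(\mathcal{S}^\alpha_D)^{-1}u)$ of $(\tilde{\mathcal{K}}^{-\alpha}_D)^*$ in $H^{-1/2}(\partial D)^3$. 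So the point spectra coincide, and by Lemma~\ref{neumannPoincareSpectra} they are exactly the three sequences $\{\zeta_{n_1}\},\{\zeta_{n_2}\},\{\zeta_{n_3}\}$ in \eqref{roots}.

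For the essential spectrum, I would start from the polynomial compactness statement preceding Lemma~\ref{neumannPoincareSpectra}: the operator $p_3((\tilde{\mathcal{K}}^{-\alpha}_D)^*) = ((\tilde{\mathcal{K}}^{-\alpha}_D)^*)^3 - k_0^2(\tilde{\mathcal{K}}^{-\alpha}_D)^*$ is compact on $H^{-1/2}(\partial D)^3$. Conjugating by the isomorphism $\mathcal{S}^\alpha_D$ and using that compactness is preserved by similarity with a bounded invertible operator, one obtains that
\[
p_3(T) \;=\; T^3 - k_0^2\,T \;=\; \mathcal{S}^\alpha_D\,p_3\!\bigl((\tilde{\mathcal{K}}^{-\alpha}_D)^*\bigr)\,(\mathcal{S}^\alpha_D)^{-1}
\]
is a compact operator on $W^\alpha_3$. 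Since $T$ is self-adjoint on $W^\alpha_3$ by Theorem~\ref{Ht2.3}, its essential spectrum is stable under compact perturbations, so the spectral mapping theorem for the polynomial $p_3$ gives
\[
p_3\!\bigl(\sigma_{\mathrm{ess}}(T)\bigr) \;=\; \sigma_{\mathrm{ess}}\!\bigl(p_3(T)\bigr) \;=\; \{0\},
\]
forcing $\sigma_{\mathrm{ess}}(T) \subseteq \{\lambda : \lambda^3 - k_0^2\lambda = 0\} = \{0, k_0, -k_0\}$.

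Finally, to show equality I would argue that each of $0$, $k_0$, $-k_0$ is an accumulation point of eigenvalues of $T$, hence cannot be an isolated eigenvalue of finite multiplicity, and therefore must belong to the essential spectrum of the self-adjoint operator $T$. This follows directly from the point-spectrum identification: the three sequences in \eqref{roots} accumulate precisely at $0$, $k_0$, and $-k_0$, and via the similarity they produce three sequences of eigenvalues of $T$ with the same accumulation points. The main technical point to handle carefully is the transfer of compactness under $\mathcal{S}^\alpha_D$ (which needs boundedness of both $\mathcal{S}^\alpha_D$ and its inverse from Lemma~\ref{Hl2.2}) and the justification of the spectral mapping theorem for $p_3$ on the self-adjoint operator $T$, which I would invoke in its standard form for bounded self-adjoint operators together with the identity $\sigma_{\mathrm{ess}}(A) = \sigma_{\mathrm{ess}}(A + K)$ for compact $K$.
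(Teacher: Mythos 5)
Your proof is correct, and the point-spectrum half coincides exactly with the paper's similarity argument via $T=\mathcal{S}^\alpha_D(\tilde{\mathcal{K}}^{-\alpha}_D)^*(\mathcal{S}^\alpha_D)^{-1}$. Where you genuinely diverge from the paper is in the essential-spectrum half, and your route is arguably the more complete one. The paper's written proof only establishes the inclusion $\{-k_0,0,k_0\}\subseteq\sigma_{\mathrm{ess}}(T)$, by pushing forward a Weyl sequence of eigenvectors of $(\tilde{\mathcal{K}}^{-\alpha}_D)^*$ through $\mathcal{S}^\alpha_D$; the reverse containment $\sigma_{\mathrm{ess}}(T)\subseteq\{-k_0,0,k_0\}$ is not explicitly argued there. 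You supply that missing half by conjugating the polynomial compactness established before Lemma~\ref{neumannPoincareSpectra}, so that $p_3(T)=T^3-k_0^2T=\mathcal{S}^\alpha_D\,p_3((\tilde{\mathcal{K}}^{-\alpha}_D)^*)\,(\mathcal{S}^\alpha_D)^{-1}$ is compact on $W^\alpha_3$, and then invoking the spectral mapping theorem for the essential spectrum of the self-adjoint operator $T$ to get $p_3(\sigma_{\mathrm{ess}}(T))=\sigma_{\mathrm{ess}}(p_3(T))=\{0\}$, forcing $\sigma_{\mathrm{ess}}(T)\subseteq p_3^{-1}(0)=\{0,\pm k_0\}$. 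Your reverse inclusion (accumulation of eigenvalues at $0,\pm k_0$ cannot be isolated, hence belongs to $\sigma_{\mathrm{ess}}$) is morally the same as the paper's Weyl-sequence construction, phrased through the characterization $\sigma_{\mathrm{ess}}=\sigma\setminus\sigma_{\mathrm{disc}}$. Two technical points worth being explicit about if you write this up: the spectral mapping theorem for $\sigma_{\mathrm{ess}}$ under polynomials holds for bounded self-adjoint (more generally normal) operators, which $T$ is by Theorem~\ref{Ht2.3}, so the step is legitimate; and you should note $W^\alpha_3$ is infinite-dimensional so that $\sigma_{\mathrm{ess}}(p_3(T))=\{0\}$ and not $\emptyset$, which follows since $(\tilde{\mathcal{K}}^{-\alpha}_D)^*$ has infinitely many eigenvalues by Lemma~\ref{neumannPoincareSpectra}.
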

\begin{proof}
Since $T$ is self-adjoint, it has empty residual spectrum and $\sigma(T)=\sigma_p(T)\cup\sigma_{ess}(T)$, see, e.g., \cite{ReedSimonV1}. We prove the theorem by showing $\sigma_p(T)=\sigma_p((\tilde{\mathcal{K}}^{-\alpha}_{D})^*)$ and $\sigma_{ess}(T)=\{-k_0,0,k_0\}$. To prove the claim on the point spectrum, suppose $(\xi,u)\in(-1/2,1/2]\times W^\alpha_3$ satisfies $Tu=\xi u.$ Then \begin{align*}
\mathcal{S^\alpha}_{D}(\tilde{\mathcal{K}}^{-\alpha}_{D})^*(\mathcal{S}_{D}^\alpha)^{-1}u&=\xi u\\
(\tilde{\mathcal{K}}^{-\alpha}_{D})^*(\mathcal{S}_{D}^\alpha)^{-1}u&=\xi (\mathcal{S}_{D}^\alpha)^{-1}u.
\end{align*}
This shows that $(\mathcal{S}^\alpha_{D})^{-1}u$ is an eigenfunction for $(\tilde{\mathcal{K}}^{-\alpha}_{D})^*$ associated the with eigenvalue $\xi.$ On the other hand, if $(\xi,w)\in(-1/2,1/2]\times H^{-1/2}(\partial D)^3$ satisfy $(\tilde{\mathcal{K}}^{-\alpha}_{D})^*w=\xi w$, then since the trace map is onto, there is a $u\in W^\alpha_3$ such that $w=(\mathcal{S}^\alpha_{D})^{-1}u$. Thus replacing $(\mathcal{S}^\alpha_{D})^{-1}u$ for $w$ yields $(\tilde{\mathcal{K}}^{-\alpha}_{D})^*(\mathcal{S}^\alpha_{D})^{-1}u=\xi (\mathcal{S}^\alpha_{D})^{-1}u$ and therefore we obtain $\mathcal{S}^\alpha_{D}(\tilde{\mathcal{K}}^{-\alpha}_{D})^*(\mathcal{S}_{D}^\alpha)^{-1}u=\xi u$. Which shows that $u$ is an eigenfunction of $T$ associated with the eigenvalue $\xi.$

We now prove $\sigma_{ess}(T)=\{-k_0,0,k_0\}$. Suppose $\xi\in \{-k_0,0,k_0\}$. Note first there exists a sequence $\{\xi_n,\rho_n\}\in \mathbb{R}\times H^{-1/2}(\partial D)^3$ such that $\rho_n$ is a sequence of eigenvectors, $\Vert\rho_n\Vert=1$, and $\xi_n\rightarrow\zeta$ for which \[|\xi_n-\xi|=\|((\tilde{\mathcal{K}}^{-\alpha}_{D})^*-\xi I)\rho_n\|.\]
Now since $(\mathcal{S}_D^\alpha)^{-1}$ is onto, we can find $u_n\in W^\alpha_3$ such that $\rho_n=(\mathcal{S}_D^\alpha)^{-1}u_n$. Therefore
\begin{align*}
  \|(T-\xi I)u_n\|&=\|  \mathcal{S}^\alpha_{D}((\tilde{\mathcal{K}}^{-\alpha}_{D})^*-\xi I)(\mathcal{S}_{D}^\alpha)^{-1}u_n\|\\
  &=\|  \mathcal{S^\alpha}_{D}((\tilde{\mathcal{K}}^{-\alpha}_{D})^*-\xi I)\rho_n\|\\
  &\leq\| \mathcal{S}^\alpha_{D}\|\|((\tilde{\mathcal{K}}^{-\alpha}_{D})^*-\xi I)\rho_n\|\\
  &\leq M\|\xi_n-\xi|.
\end{align*}
Since $T$ is selfadjoint the eigenfunctions $\{u_n\}$ form an orthogonal system and $\{-k_0,0,k_0\}$ constitute the essential spectrum of $T$.
\end{proof}


From Lemma \eqref{Hl2.4}, we see that the eigenvalues of the elastic NP operator lie in $(-1/2,1/2]$. The accumulation points of eigenvalues $k_0,-k_0,0$ also lie in $(-1/2,1/2]$. To see that $k_0,-k_0$ lie in $(-1/2,1/2]$, we use the relation $K_1-\frac{2\mu_1}{3}=\lambda_1$ where $K_1>0$ is the bulk modulus. Since
\[
\pm k_0=\mp\frac{\mu_1}{2(2\mu_1+\lambda_1)},
\]
and we have
\begin{align*}
    |k_0|=\frac{\mu_1}{2(2\mu_1+\lambda_1)}&=\frac{\mu_1}{4\mu_1+2\lambda_1}\\
    &=\frac{\mu_1}{4\mu_1+2(K_1-2\mu_1/3)}=\frac{\mu_1}{8\mu_1/3+2K}\\
    &\leq\frac{\mu_1}{8\mu_1/3}=\frac{3}{8}<\frac{1}{2}.
\end{align*}

We now derive an appropriate resolution of the identity on the space $W^\alpha_3$ and an associated spectral representation formula for $T$.  In what follows we do not explicitly indicate dependence of eigenvalues on $\alpha\in Y^\ast$ for ease of exposition.  Let the three sequences of eigenvalues for $T$ be denoted by $\{\xi_i^k\}_{i=1}^\infty$, $k=1,2,3$, associated with the three accumulation points $\xi^{1}=-k_0$, $\xi^2=0$, and $\xi^3=k_0$. The invariant subspace associated with each eigenvalue is denoted by $E_i^k=\{u\in W^\alpha_3\,;Tu=\xi_i^k u\}$
and the orthogonal projection onto this subspace is denoted by $P_i^k$, here orthogonality is with respect to the $\langle\cdot,\cdot\rangle$ inner product. We write $E^1=\sum_{i=1}^\infty \oplus E_i^1$, $E^2=\sum_{i=1}^\infty \oplus E_i^2$, $E^3=\sum_{i=1}^\infty \oplus E_i^3$, $E^4=ker\{\tilde{T}\}$, and the projection operators onto the spaces are  $P^k$, $k=1,2,3$.
We define
\begin{align}
    \label{component1}
    \tilde{T}=\sum_{k=1}^3\sum_{i=1}^\infty (\xi_i^k-\xi^k)P^k_i,
\end{align}
and $E^4=ker\{\tilde{T}\}$, with projection $P^4$.
This operator is selfadjoint and compact. Compactness follows as it is the limit of the rank one operators
\begin{align}
    \label{component2}
    \tilde{T}^n=\sum_{k=1}^3\sum_{i=1}^n (\xi_i^k-\xi^k)P^k_i.
\end{align}
One has the following decomposition of $T$ restricted to $W^\alpha_3$:
\begin{lemma}
\label{decompofT}
\begin{align}
    \label{component3}
    {T}=\sum_{k=1}^3 \sum_{i=1}^\infty \xi_i^k P^k_i.
\end{align}
The identity on $W_3$ is given by
\begin{align}
    \label{resolution}
    I=\sum_{k=1}^3\sum_{i=1}^\infty P_i^k+P^4.
\end{align}
\end{lemma}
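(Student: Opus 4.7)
The plan is to derive both identities from the spectral theorem applied to the compact self-adjoint operator $\tilde T$ introduced in \eqref{component1}, then lift the resulting decomposition back to $T$ using the fact that $T$ and $\tilde T$ share the same eigenspaces, differing only by the three accumulation-point shifts.

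First I would verify that $\tilde T$ is compact and self-adjoint on $W_3^\alpha$. Each truncation $\tilde T^n$ of \eqref{component2} is finite rank since every eigenvalue $\xi_i^k$ of $T$ has finite multiplicity (Lemma \ref{neumannPoincareSpectra}). Because $\xi_i^k \to \xi^k$ for each $k$, the operator norm tail $\|\tilde T - \tilde T^n\|$ is bounded by $\sup_{i>n, k}|\xi_i^k - \xi^k|$, which tends to zero, so $\tilde T$ is compact as a norm limit of finite rank operators. Self-adjointness is inherited from that of each orthogonal projection $P_i^k$ (onto an eigenspace of the self-adjoint operator $T$ established in Theorem \ref{Ht2.3}) together with the real scalars $\xi_i^k - \xi^k$.

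Next, the spectral theorem for compact self-adjoint operators applied to $\tilde T$ yields the orthogonal Hilbert space decomposition
\begin{equation*}
W_3^\alpha = \Bigl(\overline{\bigoplus_{k,i} E_i^k}\Bigr) \oplus E^4, \qquad E^4 = \ker \tilde T,
\end{equation*}
whose associated orthogonal projections satisfy $I = \sum_{k,i} P_i^k + P^4$, which is exactly \eqref{resolution}. The nonzero eigenvalues of $\tilde T$ are precisely $\{\xi_i^k - \xi^k\}$, whose corresponding eigenspaces coincide with the eigenspaces $E_i^k$ of $T$ by Lemma \ref{Hl2.4}; the mutual orthogonality among the $P_i^k$, and between each $P_i^k$ and $P^4$, follows from the self-adjointness of $\tilde T$ and the distinctness of its eigenvalues.

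To obtain \eqref{component3}, I would apply $T$ to the resolution of identity: writing $u = \sum_{k,i} P_i^k u + P^4 u$ and using $T P_i^k u = \xi_i^k P_i^k u$ on each eigenspace gives $T u = \sum_{k,i} \xi_i^k P_i^k u + T P^4 u$. The bound $\|T\| \le 1/2$ coming from \eqref{H27} guarantees convergence of the series in $W_3^\alpha$. The remaining step, which is the main obstacle, is to verify that $T P^4 = 0$. The argument I would use is that $P^4 u$ is orthogonal to every eigenvector of $T$ whose eigenvalue lies outside the accumulation set $\{-k_0, 0, k_0\}$, so by Lemma \ref{Hl2.4} combined with the spectral theorem for the bounded self-adjoint operator $T$, the spectral measure of $T$ on $P^4 W_3^\alpha$ is supported in $\{-k_0, 0, k_0\}$; under the convention that the sequences $\{\xi_i^k\}_i$ exhaust the entire point spectrum of $T$, any eigenvector of $T$ with eigenvalue in this accumulation set already lies in some $E_i^k$, leaving no eigenvector inside $E^4$ and forcing $T|_{E^4} = 0$. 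With this identification in hand, both identities of the lemma are established.
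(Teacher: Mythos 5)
Your proposal follows the same route as the paper's proof: establish compactness and self-adjointness of $\tilde T$, invoke the spectral theorem to obtain the orthogonal decomposition of $W_3^\alpha$ and hence \eqref{resolution}, and then lift the decomposition back to $T$. Where you differ is that you isolate, as ``the main obstacle,'' the claim $T P^4 = 0$, and you are right to do so. The paper's own proof never states this fact; it instead asserts the operator-norm convergence $\Vert T-(-k_0 P^1+k_0P^3)-\tilde{T}^n\Vert\rightarrow 0$, but that claim is \emph{equivalent} to $T P^4 = 0$: expanding $u=\sum_{k,i} P_i^k u +P^4 u$ and using $T P_i^k=\xi_i^k P_i^k$ gives $\bigl(T-(-k_0 P^1+k_0P^3)-\tilde{T}^n\bigr)u = \sum_{k}\sum_{i>n}(\xi_i^k-\xi^k)P_i^k u + T P^4 u$, whose norm tends to zero only if the second term vanishes. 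So the paper quietly assumes what you set out to prove, and your more explicit treatment is an improvement in that respect.

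One caution about the argument you then give: the claim ``any eigenvector of $T$ with eigenvalue in the accumulation set already lies in some $E_i^k$, leaving no eigenvector inside $E^4$'' is only valid if none of the accumulation points $-k_0, 0, k_0$ is itself an eigenvalue of $T$. If $\xi_{i_0}^{k_0}=\xi^{k_0}$ for some $(i_0,k_0)$, then the coefficient $(\xi_{i_0}^{k_0}-\xi^{k_0})$ in $\tilde T$ vanishes, so $E_{i_0}^{k_0}\subseteq \ker\tilde T=E^4$ rather than being orthogonal to it. In that degenerate case the resolution \eqref{resolution} double-counts $E_{i_0}^{k_0}$, and the representation \eqref{component3} misses the contribution of $T$ on the eigenspaces of $\pm k_0$ sitting inside $E^4$. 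The paper's setup appears to assume implicitly that $\sigma_p(T)\cap\{-k_0,0,k_0\}=\emptyset$, in which case every isolated point of $\sigma(T|_{E^4})\subseteq\{-k_0,0,k_0\}$ would be an eigenvalue of $T$, forcing $E^4=\{0\}$ and making both identities trivially valid. You should make that hypothesis explicit rather than burying it in a ``convention,'' since without it both \eqref{resolution} and \eqref{component3} can fail.
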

\begin{proof}

Since $\tilde{T}$ is compact and selfadjoint on $W^\alpha_3$, it follows immediately from the theory that $W^\alpha_3=(\sum_{k=1}^3\sum_{i=1}^\infty\oplus E_i^k)\oplus ker\{\tilde{T}\}$ and \eqref{resolution} follows. 
Writing any element of $u\in W^\alpha_3$ as $u=\sum_{k=1}^3\sum_{i=1}^\infty P_i^k u +P^4 u$ shows
\begin{equation}
    \label{sn to T}
    \Vert T-(-k_0 P^1+k_0P^3)-\tilde{T}^n\Vert\rightarrow 0,
\end{equation}
in the operator norm.
Thus given $\epsilon>0$ we can find $N$ such that
\begin{equation}
    \label{triangle}
    \Vert T-(-k_0 P^1+k_0P^3)-\tilde{T}\Vert\leq\Vert T-(-k_0 P^1+k_0P^3)-\tilde{T}^n\Vert+\Vert \tilde{T}^n-\tilde{T}\Vert\leq \epsilon,
\end{equation}
for all $n>N$ 
hence
\begin{equation}
\label{decpT}
T-(-k_0 P^1+k_0P^3)=\tilde{T}.
\end{equation}
and the Lemma  follows.
\end{proof}

To simplify the exposition we collect all eigenvalues of $T$ restricted to $W^\alpha_3$ and denote them as the sequence  $\{\tau_n(\alpha)\}_{n=1}^\infty\in (-1/2,1/2)$ and we have the definition

\begin{definition}\label{structural}
The structural spectra of the crystal is defined as $\cup_{\alpha\in Y^\ast}\{\tau_i(\alpha)\}_{i=1}^{\infty}$.
\end{definition}
\noindent This spectra is independent of contrast encodes the geometry of the crystal.

For fixed $\alpha$ the projections onto their eigenspaces in $W_3^\alpha$ are denoted as $P_{\tau_n(\alpha)}$. We also write $\tau_0=0$ and the projection onto $ker\{T\}$ as $P_{\tau_0}$.
On writing the projections on $W^\alpha_1$ and $W^\alpha_2$ as $P^\alpha_1$, $P^\alpha_2$ respectively, we arrive at the desired partition of unity for  $u$, $v$ in $H^1_\#(\alpha,Y)^3=W^\alpha_1\oplus W^\alpha_2\oplus W^\alpha_3$ given by
\begin{equation}
\label{partition of 1}
\langle u,v\rangle=\langle P^\alpha_1u + P^\alpha_2u+ (\sum_{-\frac{1}{2}<\tau_i(\alpha)<\frac{1}{2}}P_{\tau_i(\alpha)})u,v\rangle.
\end{equation}
The spectral decomposition for $T_{k}^\alpha$ associated with the sesquilinear form is given by

\begin{theorem}
\label{Ht2.5}
The linear operator $T_k^{\alpha}:H^1_{\#}(\alpha, Y)^3\mapsto H^1_{\#}(\alpha, Y)^3$ associated with the

sesquilinear form $B_k$ is given by
\[ \langle T_{k}^\alpha u,v\rangle=\langle k P^\alpha_1u+P^\alpha_2u+\sum_{-\frac{1}{2}<\tau_i(\alpha)<\frac{1}{2}}[k(1/2+\tau_i(\alpha))+(1/2-\tau_i(\alpha))]P_{\tau_i(\alpha)}u,v\rangle
\]

for all $u,v\in H^1_{\#}(\alpha, Y)^3$.
\end{theorem}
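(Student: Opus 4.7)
The plan is to compute the bilinear form $B_k(u,v)$ block by block relative to the orthogonal decomposition $H^1_\#(\alpha,Y)^3 = W_1^\alpha \oplus W_2^\alpha \oplus W_3^\alpha$, then translate the result into the spectral language furnished by Lemma \ref{decompofT}. First I would verify that the form $B_k$ respects this decomposition, i.e.\ that the off-diagonal blocks vanish. For $u\in W_1^\alpha$ and $v\in W_2^\alpha$ this is immediate since $\mathcal{E}(u)=0$ on $D$ and $\mathcal{E}(v)=0$ on $Y\setminus D$. For $u\in W_1^\alpha$ and $v\in W_3^\alpha$, the identity $\langle u,v\rangle = 0$ together with $\mathcal{E}(u)|_D = 0$ forces the remaining integral over $Y\setminus D$ to vanish, so $B_k(u,v)=0$; the analogous argument handles $u\in W_2^\alpha$, $v\in W_3^\alpha$. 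Consequently $T_k^\alpha$ leaves each subspace invariant and it suffices to diagonalize it on each piece separately.

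Next I would handle the two ``easy'' blocks. On $W_1^\alpha$ the definition \eqref{H6} collapses to $B_k(u,v) = k\int_{Y\setminus D}\mathbf{C}^1\mathcal{E}(u):\overline{\mathcal{E}(v)}\,dx = k\langle u,v\rangle$, so $T_k^\alpha|_{W_1^\alpha} = k\,\mathrm{Id}$. On $W_2^\alpha$ it collapses to $B_k(u,v) = \int_D \mathbf{C}^1\mathcal{E}(u):\overline{\mathcal{E}(v)}\,dx = \langle u,v\rangle$, so $T_k^\alpha|_{W_2^\alpha} = \mathrm{Id}$. These two statements yield the terms $\langle kP_1^\alpha u,v\rangle$ and $\langle P_2^\alpha u,v\rangle$ in the desired formula.

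The substantive step is the $W_3^\alpha$ block. Starting from \eqref{H27} and the definition of $\langle\cdot,\cdot\rangle$, for $u,v\in W_3^\alpha$ one obtains the two identities
\begin{equation*}
\int_{Y\setminus D}\mathbf{C}^1\mathcal{E}(u):\overline{\mathcal{E}(v)}\,dx = \langle (\tfrac{1}{2}I + T)u,v\rangle,
\qquad
\int_{D}\mathbf{C}^1\mathcal{E}(u):\overline{\mathcal{E}(v)}\,dx = \langle (\tfrac{1}{2}I - T)u,v\rangle,
\end{equation*}
simply by adding and subtracting $\frac{1}{2}\langle u,v\rangle$ and $\langle Tu,v\rangle$. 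Substituting these into \eqref{H6} gives $B_k(u,v) = \langle[k(\tfrac{1}{2}I + T) + (\tfrac{1}{2}I - T)]u,v\rangle$ on $W_3^\alpha$. Inserting the spectral resolution $T = \sum_i \tau_i(\alpha) P_{\tau_i(\alpha)}$ from Lemma \ref{decompofT} (the zero eigenvalue contribution $P_{\tau_0}$ corresponding to $\ker T|_{W_3^\alpha}$ is included in the sum) produces the factor $k(\tfrac{1}{2}+\tau_i(\alpha)) + (\tfrac{1}{2}-\tau_i(\alpha))$ on each eigenspace, which is precisely the weight appearing in the theorem.

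The only point requiring care is that the spectral sum for $T$ on $W_3^\alpha$ is not a norm-convergent sum of finite-rank operators — the accumulation points $\pm k_0$ prevent that — but the decomposition $T = (-k_0 P^1 + k_0 P^3) + \widetilde{T}$ established in Lemma \ref{decompofT} reduces the convergence question to the compact operator $\widetilde{T}$, for which the representation converges in operator norm as shown in \eqref{sn to T}. Thus the strong-operator identity $B_k(u,v) = \langle \sum_i[k(\tfrac{1}{2}+\tau_i(\alpha))+(\tfrac{1}{2}-\tau_i(\alpha))]P_{\tau_i(\alpha)}u,v\rangle$ is well defined for every $u,v\in W_3^\alpha$. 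Combining the three blocks yields the theorem. I do not anticipate a serious obstacle; the main technical point is bookkeeping the convergence of the $W_3^\alpha$ spectral series, and this has already been done in Lemma \ref{decompofT}.
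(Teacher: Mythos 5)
Your proof is correct and follows essentially the same route as the paper: decompose $H^1_\#(\alpha,Y)^3 = W_1^\alpha\oplus W_2^\alpha\oplus W_3^\alpha$, read off the scalar action on $W_1^\alpha$ and $W_2^\alpha$, and on $W_3^\alpha$ use the definition \eqref{H27} of $T$ to rewrite $\int_{Y\setminus D}$ and $\int_D$ in terms of $T$, then apply the spectral decomposition from Lemma \ref{decompofT}. The one organizational difference is that you extract the operator-level identities $\int_{Y\setminus D}\mathbf{C}^1\mathcal{E}(u):\overline{\mathcal{E}(v)}\,dx=\langle(\tfrac12 I+T)u,v\rangle$ and $\int_{D}\mathbf{C}^1\mathcal{E}(u):\overline{\mathcal{E}(v)}\,dx=\langle(\tfrac12 I-T)u,v\rangle$ once and for all $u,v\in W_3^\alpha$, giving $T_k^\alpha|_{W_3^\alpha}=k(\tfrac12 I+T)+(\tfrac12 I-T)$ directly, whereas the paper derives the weight $k(\tfrac12+\tau_i)+(\tfrac12-\tau_i)$ eigenprojection by eigenprojection using $\langle T(P_{\tau_i}u),v\rangle=\tau_i\langle P_{\tau_i}u,v\rangle$. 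Your version is a bit cleaner, and your explicit check that the off-diagonal blocks of $B_k$ vanish (so $T_k^\alpha$ reduces the decomposition) is a useful piece of bookkeeping that the paper leaves implicit; that check is also what justifies extending the $W_3^\alpha$ identities from $v\in W_3^\alpha$ to general $v\in H^1_\#(\alpha,Y)^3$, which is needed for the stated formula. One minor note on the convergence remark: the relevant identity $T_k^\alpha|_{W_3^\alpha}=\tfrac{k+1}{2}I+(k-1)T$ expresses the sum $\sum_i[k(\tfrac12+\tau_i)+(\tfrac12-\tau_i)]P_{\tau_i}$ as a linear combination of the (strongly convergent) resolution of the identity and the spectral decomposition of the bounded self-adjoint $T$, so convergence is immediate without needing to invoke the $\widetilde T$ splitting; the point you raise is valid but the justification is a touch more direct than the route through \eqref{sn to T}.
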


\begin{proof}
Recall that for all  $u,v\in H^1_\#(\alpha,Y)^3$
\begin{align*}
B_{k}(u,v)&=\int_{Y}{\mathbf{C}(x)\mathcal{E}( u):\overline{\mathcal{E}( v)}\; dx}\\
&=\int_{Y\setminus D}\mathbf{C}^2\mathcal{E}( u):\overline{\mathcal{E}(v)}\; dx+\int_{ D}\mathbf{C}^1\mathcal{E}(u):\overline{\mathcal{E}(v)}\; dx\\
&=k\int_{Y\setminus D}\mathbf{C}^1\mathcal{E}( u):\overline{\mathcal{E}(v)}\; dx+\int_{ D}\mathbf{C}^1\mathcal{E}(u):\overline{\mathcal{E}(v)}\; dx.
\end{align*}
Let $u,v\in W_3$. Since $P_{\tau_i(\alpha)}u$ is the eigenvector corresponding to eigenvalue $\tau_i(\alpha)$ of $T$, we have $\langle T(P_{\tau_i(\alpha)}u),v\rangle=\tau_i(\alpha)\langle P_{\tau_i(\alpha)}u,v\rangle$ or equivalently after manipulation
\begin{align*}
  \int_{Y\setminus D}\mathbf{C}^1\mathcal{E} (P_{\tau_i(\alpha)}u):\overline{\mathcal{E}(v)}\; dx
  &=\frac{(\frac{1}{2}+\tau_i(\alpha))}{(\frac{1}{2}-\tau_i(\alpha))}\int_{D}\mathbf{C}^1\mathcal{E} (P_{\tau_i(\alpha)}u):\overline{\mathcal{E}(v)}\; dx.
\end{align*}
So we find that
\begin{align*}
    B_{k}(P_{\tau_i(\alpha)}u,v)&=[k\frac{(\frac{1}{2}+\tau_i(\alpha))}{(\frac{1}{2}-\tau_i(\alpha))}+1]\int_{D}\mathbf{C}^1\mathcal{E} (P_{\tau_i(\alpha)}u):\overline{\mathcal{E}(v)}\; dx.
\end{align*}
We also have 
\[
\int_{D}\mathbf{C}^1\mathcal{E} (P_{\tau_i(\alpha)}u):\overline{\mathcal{E}(v)}\; dx=(1/2-\tau_i(\alpha))\int_{Y}\mathbf{C}^1\mathcal{E} (P_{\tau_i(\alpha)}u):\overline{\mathcal{E}(v)}\; dx.
\]
Therefore
\[
 B_{k}(P_{\tau_i(\alpha)}u,v)=[k(1/2+\tau_i(\alpha))+(1/2-\tau_i(\alpha))]\int_{Y}\mathbf{C}^1\mathcal{E} (P_{\tau_i(\alpha)}u):\overline{\mathcal{E}(v)}\; dx.
\]
It is easily seen that
\begin{align*}
 B_{k}(P^\alpha_1u,v) &=k \int_{Y\setminus D}\mathbf{C}^1\mathcal{E} (P^\alpha_1u):\overline{\mathcal{E}(v)}\; dx\\
 B_{k}(P^\alpha_2u,v) &=k \int_{D}\mathbf{C}^1\mathcal{E} (P^\alpha_2u):\overline{\mathcal{E}(v)}\; dx,
 \end{align*}
so
\[ \langle T_{k}^\alpha u,v\rangle=\langle k P^\alpha_1u+P^\alpha_2u+\sum_{-\frac{1}{2}<\tau_i<\frac{1}{2}}[k(1/2+\tau_i(\alpha))+(1/2-\tau_i(\alpha))]P_{\tau_i(\alpha)}u,v\rangle.
\]
\end{proof}
It is clear that $T_{k}^\alpha:H^1_\#(\alpha,Y)^3\mapsto H^1_\#(\alpha,Y)^3$ is invertible when 
\begin{equation}
\label{HZ}
k\in \mathbb{C}\setminus Z^\alpha\;\text{ where}\; Z^\alpha=\{\frac{\tau_i(\alpha)-1/2}{\tau_i(\alpha)+1/2}\}_{\{-\frac{1}{2}<\tau_i(\alpha)<\frac{1}{2}\}}.
\end{equation}
So for $z=k^{-1}$, we have
\begin{equation}
\label{H34}
 (T_{k}^\alpha)^{-1}= z P^\alpha_1u+P^\alpha_2u+\sum_{-\frac{1}{2}<\tau_i(\alpha)<\frac{1}{2}}z[(1/2+\tau_i(\alpha))+z(1/2-\tau_i(\alpha))]P_{\tau_i(\alpha)}.
\end{equation}
For future reference we also introduce the set $S^\alpha$ of $z\in\mathbb{C}$ for which $T_k^\alpha$ is not invertible given by
\begin{equation}
    \label{H35}
    S^\alpha=\{\frac{\tau_i(\alpha)+1/2}{\tau_i(\alpha)-1/2}\}_{\{-1/2\leq \tau_i(\alpha)\leq 1/2\}}
\end{equation}
which also lies on the negative real axis. 
Collecting results, the spectral representation of the operator $ -\nabla\cdot (\mathbf{C}^1\chi_{D}(x)+\mathbf{C}^2\chi_{Y\setminus D}(x))\mathcal{E}$ on $H^1_\#(\alpha,Y)^3$ is given by
\begin{equation}
    \label{H36}
    -\nabla\cdot (\mathbf{C}^1\chi_{D}(x)+\mathbf{C}^2\chi_{Y\setminus D}(x))\mathcal{E}= -\mathcal{L}_\alpha T^\alpha_{k},
\end{equation}
in the sense of linear functionals over the space $H^1_\#(\alpha,Y)^3$ and recall that $-\mathcal{L}_\alpha$ is the Lam\'e operator associated with the bilinear form $\langle\cdot,\cdot\rangle$ defined on $H^1_\#(\alpha,Y)^3.$ This formulation is useful since it separates the effect of the contrast $k$ from the underlying geometry of the crystal. We note for future use that
$(-\mathcal{L}_\alpha)^{-1}$ is given by 
\begin{equation}
\label{B3}
(-\mathcal{L}_\alpha)^{-1}u(x)=-\int_Y\mathbf{G}^\alpha(x,y)u(y)\;dy.
\end{equation}
\section{Band Structure for Complex Coupling Constant}
\label{Band structure}
We set $\omega^2=\xi$ in $\eqref{I1}$. The operator representation is applied to write the Bloch eigenvalue problem as
\begin{equation}
\label{B1}
\begin{aligned}
 -\nabla\cdot (\mathbf{C}^1\chi_{D}(x)+\mathbf{C}^2\chi_{Y\setminus D}(x))\mathcal{E}(u) &= -\mathcal{L}_\alpha T^\alpha_{k} u=\xi \rho u\\
 (T^\alpha_{k})^{-1}(-\mathcal{L}_\alpha)^{-1}\rho u &= \frac{1}{\xi} u.
\end{aligned}
\end{equation}
We characterize the Bloch spectra by analysing the operator
\begin{equation}
\label{B2}
B^\alpha(k)=(T^\alpha_{k})^{-1}(-\mathcal{L}_\alpha)^{-1},
\end{equation}
and the operator given by the product $B^\alpha(k)\rho$.

It is shown in Theorem \ref{Dt5} that the operator  $B^\alpha(k):L^2_\#(\alpha,Y)^3\mapsto H^1_\#(\alpha, Y)^3$ is bounded for $k\notin Z^\alpha$. Thus the product $B^\alpha(k)\rho:L^2_\#(\alpha,Y)^3\mapsto H^1_\#(\alpha, Y)^3$ is also bounded. It follows from the compact embedding of $H^1_\#(\alpha, Y)^3$ into $L^2_\#(\alpha,Y)^3$ that $B^\alpha(k)\rho$ is compact on $L^2_\#(\alpha,Y)^3$ and therefore has a discrete spectrum $\{ \gamma_i(k,\alpha)\}_{i\in\mathbb{N}}$ with a possible accumulation point at $0$. The corresponding eigenspaces are finite dimensional and the eigenfunctions $p_i\in L^2_\#(\alpha,Y)^3$ satisfy
\begin{equation}
    \label{B4}
    [B^\alpha(k)\rho ]p_i(x)=\gamma_i(k,\alpha)p_i(x)\;\text{for}\;x\;\text{in}\;Y
\end{equation}
and also belong to $H^1_\#(\alpha, Y)^3$. Note further for $\gamma_i\neq 0$ that \eqref{B4} holds if and only if \eqref{B1} holds with $\xi_i(k,\alpha)=\gamma_i^{-1}(k,\alpha)$, and $-\mathcal{L}_\alpha T^\alpha_{k}  p_i=\rho \xi_i(k,\alpha)p_i$. Collecting results we have the following theorem
\begin{theorem}
\label{BT1}
Let $Z^\alpha$ denote the set of points on the negative real axis defined by \eqref{HZ}. Then the Bloch eigenvalue problem \eqref{I1} for the operator $-\nabla\cdot (\mathbf{C}(x)\mathcal{E} h(x))$ associated with the sesquilinear form \eqref{H6} can be extended for values of the coupling constant $k$ off the positive real axis into $\mathbb{C}\setminus Z^\alpha$, i.e., for each $\alpha\in Y^*$ the Block eigenvalues are of finite multiplicity and denoted by $\xi_j(k,\alpha)=\gamma_j^{-1}(k,\alpha), j\in\mathbb{N}$, and the band structure 
\begin{equation}
    \label{B5}
    \xi_j(k,\alpha)=\omega^2,\; j\in\mathbb{N}
\end{equation}
\end{theorem}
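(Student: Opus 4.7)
The plan is to read Theorem \ref{BT1} as a repackaging of the two assertions (i) that the inverse operator $B^\alpha(k)$ is well defined and well behaved on $\mathbb{C}\setminus Z^\alpha$, and (ii) that the Bloch eigenvalue equation \eqref{I1} is equivalent, away from $\gamma = 0$, to the eigenvalue equation \eqref{B4} for a compact operator on $L^2_\#(\alpha,Y)^3$. Once both items are in place the conclusion that $\xi_j(k,\alpha)=\gamma_j^{-1}(k,\alpha)$ has finite multiplicity and extends analytically in $k$ to $\mathbb{C}\setminus Z^\alpha$ becomes a direct consequence of Riesz--Schauder theory.

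First I would fix $k\in\mathbb{C}\setminus Z^\alpha$ and invoke the explicit spectral formula \eqref{H34} for $(T^\alpha_k)^{-1}$ on $H^1_\#(\alpha,Y)^3$. Because the denominators $(1/2+\tau_i(\alpha))+z(1/2-\tau_i(\alpha))$ are bounded away from zero uniformly in $i$ for $k\notin Z^\alpha$ (the closure of $Z^\alpha$ on the negative real axis having only the three admissible accumulation points identified in Lemma \ref{neumannPoincareSpectra}), the operator $(T^\alpha_k)^{-1}$ is bounded on $H^1_\#(\alpha,Y)^3$. Combined with the single-layer/Green's formula representation \eqref{B3} for $(-\mathcal{L}_\alpha)^{-1}:L^2_\#(\alpha,Y)^3\to H^1_\#(\alpha,Y)^3$, this yields that $B^\alpha(k)=(T^\alpha_k)^{-1}(-\mathcal{L}_\alpha)^{-1}$, and therefore $B^\alpha(k)\rho$, is a bounded map from $L^2_\#(\alpha,Y)^3$ into $H^1_\#(\alpha,Y)^3$. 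This is the content of Theorem \ref{Dt5} cited in the excerpt, which I would use as a black box.

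Next I would invoke the Rellich--Kondrachov compact embedding $H^1_\#(\alpha,Y)^3\hookrightarrow L^2_\#(\alpha,Y)^3$ (valid on the bounded fundamental cell $Y$ with quasi-periodic boundary values) to conclude that $B^\alpha(k)\rho$ is a compact operator on $L^2_\#(\alpha,Y)^3$. Standard spectral theory for compact operators on Hilbert spaces then gives a discrete set of eigenvalues $\{\gamma_j(k,\alpha)\}_{j\in\mathbb{N}}$, each of finite multiplicity, with $0$ as the only possible accumulation point, and associated eigenfunctions $p_j\in L^2_\#(\alpha,Y)^3$ satisfying \eqref{B4}; an elliptic regularity bootstrap using boundedness of $B^\alpha(k)\rho$ into $H^1_\#(\alpha,Y)^3$ shows the $p_j$ are actually in the Hilbert space $H^1_\#(\alpha,Y)^3$.

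Finally I would carry out the equivalence: applying $-\mathcal{L}_\alpha T^\alpha_k$ to \eqref{B4} for $\gamma_j(k,\alpha)\neq 0$ returns \eqref{B1} with $\xi_j(k,\alpha)=\gamma_j^{-1}(k,\alpha)$, and conversely any solution of \eqref{B1} satisfies \eqref{B4} once $B^\alpha(k)$ is well defined. This establishes \eqref{B5} and the finite multiplicity claim. The only real obstacle is the uniform-boundedness step for $(T^\alpha_k)^{-1}$ at complex $k$ near, but not in, $Z^\alpha$; this is handled by the explicit projector decomposition of Theorem \ref{Ht2.5} because the coefficient in front of each $P_{\tau_i(\alpha)}$ in \eqref{H34} is a rational function of $k$ whose only poles are the points of $Z^\alpha$, with the three admissible accumulation points of $\{\tau_i(\alpha)\}$ translating into isolated accumulation points of $Z^\alpha$ on the negative real axis separated from any chosen $k\in\mathbb{C}\setminus Z^\alpha$. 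Everything else is a routine application of compact-operator spectral theory.
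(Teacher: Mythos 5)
Your proposal is correct and follows essentially the same route as the paper: boundedness of $B^\alpha(k)\rho$ for $k\notin Z^\alpha$ via the projector decomposition of $(T^\alpha_k)^{-1}$ (the paper's Theorem \ref{Dt5}), compactness from the embedding $H^1_\#(\alpha,Y)^3\hookrightarrow L^2_\#(\alpha,Y)^3$, Riesz--Schauder theory for the discrete finite-multiplicity spectrum $\{\gamma_j(k,\alpha)\}$, and the equivalence of \eqref{B4} with \eqref{B1} away from $\gamma=0$ giving $\xi_j(k,\alpha)=\gamma_j^{-1}(k,\alpha)$. The extra remarks you make about the regularity of the eigenfunctions via the mapping property of $B^\alpha(k)\rho$ and about the accumulation points of $Z^\alpha$ are additional detail rather than a different strategy.
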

extends to complex coupling constants $k\in\mathbb{C}\setminus Z^\alpha$.

\section{Power series representation of Bloch eigenvalues for high contrast periodic media}
\label{Representation}
We set $\gamma=\xi^{-1}(k,\alpha)$ and analyse the spectral problem
\begin{equation}
\label{P1}
    [B^{\alpha}(k)\rho]u=\gamma(k,\alpha)u.
\end{equation}
We analyse the high contrast limit by developing a power series in $z=\frac{1}{k}$ about $z=0$ for the spectrum of the family of operators associated with \eqref{P1}.
\begin{align*}
B^{\alpha}(k)&:=(T_{k}^\alpha)^{-1}(-\mathcal{L}_\alpha)^{-1}\\
&=( z P^\alpha_1u+P^\alpha_2u+\sum_{-\frac{1}{2}<\tau_i(\alpha)<\frac{1}{2}}z[(1/2+\tau_i(\alpha))+z(1/2-\tau_i(\alpha))]P_{\tau_i(\alpha)})(-\mathcal{L}_\alpha)^{-1}\\
&=A^{\alpha}(z).
\end{align*}
We define the operator $A^\alpha(z)$ such that $A^\alpha(1/k)=B^\alpha(k)$ and the associated eigenvalues $\beta(1/k,\alpha)=\gamma(k,\alpha)$ and the spectral problem is $[A^\alpha(z)\rho] u=\beta(z,\alpha)u$ for $u\in L^2_\#(\alpha,Y)^3$.\\
From the above representation, it is easily seen that $A^\alpha(z)$  is self-adjoint for $k\in\mathbb{R}$ and is a family of bounded operators taking $L^2_\#(\alpha,Y)^3$ into itself.
\begin{lemma}
\label{PL1}
$A^\alpha(z)$ is holomorphic on $\Omega_0:=\mathbb{C}\setminus S^\alpha$, where $S^\alpha=\cup_{i\in\mathbb{N}} z_i$ is the collection of points $z_i=(1/2+\tau_i(\alpha))/(\tau_i(\alpha)-1/2)$ on the negative real axis associated with the eigenvalues $\{\tau_i(\alpha)\}_{i\in\mathbb{N}}$. The set $S^\alpha$ consists of poles of $A^\alpha(z)$ with accumulation points $z\in\{ -\frac{3\mu_1+\lambda_1}{\mu_1+\lambda_1}, -1, -\frac{\mu_1+\lambda_1}{3\mu_1+\lambda_1}\}$.
\end{lemma}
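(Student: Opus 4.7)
The plan is to leverage the spectral decomposition of $T_k^\alpha$ from Theorem \ref{Ht2.5} to write $A^\alpha(z) = (T_k^\alpha)^{-1}(-\mathcal{L}_\alpha)^{-1}$ explicitly as a sum over invariant subspaces, each scaled by a rational function of $z = 1/k$, and then read off both holomorphy and the pole structure directly from this representation.

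First I would use Theorem \ref{Ht2.5} together with the resolution of the identity \eqref{partition of 1} to invert $T_k^\alpha$ on the invariant subspaces $W_1^\alpha$, $W_2^\alpha$ and each eigenspace of the auxiliary operator $T$. On $W_1^\alpha$ the operator $T_k^\alpha$ acts as multiplication by $k$, on $W_2^\alpha$ by $1$, and on the $\tau_i(\alpha)$-eigenspace by $k(1/2+\tau_i(\alpha))+(1/2-\tau_i(\alpha))$. Inverting each scalar and substituting $z=1/k$ yields
$$A^\alpha(z) = \Bigl( z\,P_1^\alpha + P_2^\alpha + \sum_{-1/2<\tau_i(\alpha)<1/2} \frac{z}{(1/2+\tau_i(\alpha)) + z(1/2-\tau_i(\alpha))}\, P_{\tau_i(\alpha)}\Bigr)(-\mathcal{L}_\alpha)^{-1}.$$
Since $(-\mathcal{L}_\alpha)^{-1}$ is $z$-independent and bounded from $L^2_\#(\alpha,Y)^3$ into $H^1_\#(\alpha,Y)^3$ via the Green's representation \eqref{B3}, questions of holomorphy and poles of $A^\alpha$ reduce to the same questions for the bracketed operator on $H^1_\#(\alpha,Y)^3$.

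Next I would identify the singular set. The $i$-th coefficient $f_i(z) = z/[(1/2+\tau_i(\alpha)) + z(1/2-\tau_i(\alpha))]$ is a Möbius-type function with a unique simple pole at $z_i = (1/2+\tau_i(\alpha))/(\tau_i(\alpha)-1/2)$, which lies on the negative real axis because $\tau_i(\alpha) \in (-1/2, 1/2)$. Collecting these yields $S^\alpha = \{z_i\}_{i \in \mathbb{N}}$. Invoking Lemma \ref{neumannPoincareSpectra}, which gives the three accumulation points $-k_0,\,0,\,k_0$ of $\{\tau_i(\alpha)\}$, a direct computation using $k_0 = -\mu_1/(2(2\mu_1+\lambda_1))$ shows that the images $z_i$ accumulate at precisely $-(3\mu_1+\lambda_1)/(\mu_1+\lambda_1)$, $-1$, and $-(\mu_1+\lambda_1)/(3\mu_1+\lambda_1)$, the three values stated in the lemma.

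The main obstacle is proving uniform operator-norm convergence of this series on compact subsets $K$ of $\Omega_0$ that remain bounded away from the three accumulation values. Here I would exploit that the projections $\{P_1^\alpha, P_2^\alpha, P_{\tau_i(\alpha)}\}_{i}$ are mutually orthogonal with respect to the inner product $\langle\cdot,\cdot\rangle$ on $H^1_\#(\alpha,Y)^3$, so that for any tail
$$\Bigl\Vert \sum_{i>N} f_i(z)\, P_{\tau_i(\alpha)} \Bigr\Vert_{\mathrm{op}} = \sup_{i>N} |f_i(z)|.$$
Uniform convergence on $K$ thereby reduces to a purely scalar tail estimate, and since $\tau_i(\alpha)$ clusters only at $\{-k_0, 0, k_0\}$ the denominators of $f_i(z)$ are bounded below uniformly for $i$ large and $z \in K$. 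A Weierstrass-type argument then identifies $A^\alpha(z)$ as a locally uniform limit of meromorphic operator-valued functions, hence meromorphic on $\Omega_0$ with a simple pole at each $z_i$ whose residue is a scalar multiple of $P_{\tau_i(\alpha)}(-\mathcal{L}_\alpha)^{-1}$. The delicate bookkeeping is passing between the $L^2$ norm (on the domain of $A^\alpha$) and the $H^1$ norm (where the orthogonality of projections lives); once this is pinned down the holomorphy statement follows cleanly.
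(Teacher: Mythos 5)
Your proof is correct and follows the same route the paper intends: invert the spectral decomposition from Theorem \ref{Ht2.5} subspace-by-subspace, substitute $z=1/k$, and read off the Möbius poles and their accumulation points from the three clusters $\{-k_0,0,k_0\}$ of the structural spectrum. Two things you did well that the paper elides: you restored the inverse in the coefficient $z\bigl[(1/2+\tau_i)+z(1/2-\tau_i)\bigr]^{-1}$, which is dropped (a typo) in both \eqref{H34} and the display preceding Lemma \ref{PL1} but appears correctly in \eqref{L1}; and you explicitly used the mutual orthogonality of the projections $P_1^\alpha,P_2^\alpha,P_{\tau_i(\alpha)}$ to reduce operator-norm convergence to a scalar supremum, which the paper leaves implicit. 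One small precision issue: since the accumulation points $-1$, $-(\mu_1+\lambda_1)/(3\mu_1+\lambda_1)$, $-(3\mu_1+\lambda_1)/(\mu_1+\lambda_1)$ need not themselves lie in $S^\alpha$, the operator $(T_k^\alpha)^{-1}$ fails to be bounded at those three points as well, so holomorphy really holds on $\mathbb{C}\setminus\overline{S^\alpha}$; your requirement that $K$ stay bounded away from the accumulation values addresses exactly this, and is in fact slightly more careful than the lemma's own phrasing. Finally, your statement that $A^\alpha$ is "meromorphic on $\Omega_0$ with a simple pole at each $z_i$" should say holomorphic on $\Omega_0$ with the $z_i$ lying on the complement and being simple poles in the usual isolated-singularity sense (the accumulation points are non-isolated singularities, not poles); this is a wording slip, not a gap in the argument.
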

In section 8 we develop explicit $\alpha$ independent lower bounds $-1/2<\tau^-\leq\tau^-(\alpha)=\min_i\{\tau_i(\alpha)\}$, that hold for generic classes of inclusion domains $D$ and for every $\alpha\in Y^*$. The corresponding upper bound $z^+$ on $S^\alpha$ is written
\begin{equation}
\label{P2}
    \max_i\{z_i\}=\frac{\tau^-(\alpha)+1/2}{\tau^-(\alpha)-1/2}=z^*\leq z^+<0.
\end{equation}

Let $\beta_0^\alpha\in\sigma(A^\alpha(0)\,\rho)$ with spectral projection $P(0)$, and let $\Gamma$ be a closed contour in $\mathbb{C}$ enclosing $\beta_0^\alpha$ but no other element in $\sigma(A^\alpha(0)\,\rho).$ The spectral projection associated with $\beta^\alpha(z)\in\sigma(A^\alpha(z)\,\rho)$ for $\beta^\alpha(z)\in int(\Gamma)$ is denoted by $P(z).$ We write $M(z)=P(z)L^2_\#(\alpha, Y)^3$ and suppose for the moment that $\Gamma$ lies in the resolvent of $A^\alpha(z)\,\rho$ and $\dim(M(0)=\dim(M(z))=m.$ Now define $\hat{\beta}^\alpha(z)=\frac{1}{m}\operatorname{tr}(A^\alpha(z)\,\rho \,P(z)),$ the weighted mean of the eigenvalue group $\{\beta^\alpha_1(z),\dots\beta^\alpha_m(z)\}$ corresponding to $\beta^\alpha_0=\beta^\alpha_1(0)=\dots=\beta^\alpha_m(0).$ We write the weighted mean as 
\begin{equation}
    \label{P3}
\hat{\beta }^\alpha(z)=\beta^\alpha_0+\frac{1}{m}\operatorname{tr}[(A^\alpha(z)\,\rho-\beta^\alpha_0)P(z)].
\end{equation}
Since $A^\alpha(z)$ is analytic in a neighborhood of the origin we write 
\begin{equation}
    \label{P4}
    A^\alpha(z)=A^\alpha(0)+\sum_{n=1}^{\infty}z^n A^\alpha_n.
\end{equation}
The explicit form of the sequence $\{A^\alpha_n\}_{n\in\mathbb{N}}$ will be given later. Define the resolvent of the operator  $A^\alpha(z)\,\rho$ by
\[
R(\zeta,z)=(A^\alpha(z)\,\rho-\zeta)^{-1},
\]
and expanding successively in Neumann series and power series as in \cite{TKato3}  we obtain the resolvent as power series in $z$ with coefficents depending on $\zeta$ and $\rho$
\begin{equation}
    \label{P5}
    \begin{aligned}
    R(\zeta,z)&=R(\zeta,0)[I+(A^\alpha(z)\,\rho-A^\alpha(0)\rho)R(\zeta,0)]^{-1}\\
    &=R(\zeta,0)\sum_{p=0}^{\infty}[-(A^\alpha(z)\,\rho-A^\alpha(0)\,\rho)R(\zeta,0)]^p\\
    &=R(\zeta,0)+\sum_{n=1}^{\infty}z^nR_n(\zeta,\rho),
    \end{aligned}
\end{equation}
where
\[
R_n(\zeta,\rho)=\sum_{\stackrel{k_1+\dots+k_p=n}{k_j\geq 1}}(-1)^p R(\zeta,0)\, A^\alpha_{k_1}\,\rho\,R(\zeta,0) A^\alpha_{k_2}\,\rho \,R(\zeta,0)\cdots R(\zeta,0)A^\alpha_{k_p}\,\rho\, R(\zeta,0),
\]
where the sum is taken for all combinations of positive integers $p$ and $\{k_1,\ldots,k_p\}$ such that $1 \leq p\leq n$, $k_1+\cdots+k_p=n$.
Application of the contour integral formula for spectral projections delivers the spectral projection 
\begin{equation}
    \label{P6}
    \begin{aligned}
    P(z)&=-\frac{1}{2\pi i}\oint_\Gamma R(\zeta,z)\;d\zeta\\
    &=P(0)+\sum_{n=1}^{\infty}z^n P_n
    \end{aligned}
\end{equation}
where $P_n=-\frac{1}{2\pi i}\oint_\Gamma R_n(\zeta,\rho)\;d\zeta$. Now we develop the series for the weighted mean of the eigenvalue group associated with an eigenvalue $\beta^\alpha_0$ of geometric multiplicity $m$. start with 
\begin{equation}
\label{P7}
   (A^\alpha(z)\,\rho-\beta^\alpha_0)R(\zeta,z)=I+(\zeta-\beta^\alpha_0)R(\zeta,z)
\end{equation}
and we have 
\begin{equation}
\label{P8}
      (A^\alpha(z)\,\rho-\beta^\alpha_0)P(z)=-\frac{1}{2\pi i}\oint_\Gamma(\zeta-\beta^\alpha_0) R(\zeta,z)\;d\zeta,
\end{equation}
so from \eqref{P3}
\begin{equation}
\label{P9}
    \hat{\beta}(z)-\beta^\alpha_0=-\frac{1}{2m\pi i}\operatorname{tr}\oint_\Gamma(\zeta-\beta^\alpha_0) R(\zeta,z)\;d\zeta.
\end{equation}
Manipulation and integration by parts  as in \cite{TKato3}, Chap. 2, Sec. 2.2  yields
\begin{equation}
\label{P-10}
    \hat{\beta}(z)=\beta^\alpha_0+\sum_{n=1}^{\infty}z^n\beta^\alpha_n,
\end{equation}
where 
\begin{equation}
\label{P-11}
    \beta^\alpha_n=-\frac{1}{2m\pi i}\operatorname{tr}\sum_{\stackrel{k_1+\dots+k_p=n}{k_i\geq 1}}\frac{(-1)^p}{p}\oint_\Gamma A^\alpha_{k_1}\,\rho\,R(\zeta,0) A^\alpha_{k_2}\,\rho\,R(\zeta,0)\cdots A^\alpha_{k_p}\,\rho\,R(\zeta,0)\;d\zeta,
\end{equation}
as before the sum is taken for all combinations of positive integers $p$ and $\{k_1,\ldots,k_p\}$ such that $1 \leq p\leq n$, $k_1+\cdots+k_p=n$.
\section{Spectrum in the high contrast limit: Quasi-periodic case}
\label{Spectrum-quasiperiodic}
We now identify the limiting operator $A^\alpha(0)\,\rho$ when $\alpha\neq 0.$ Using the representation
\begin{equation}
\label{S1}
A^{\alpha}(z)= ( z P^\alpha_1+P^\alpha_2+\sum_{-\frac{1}{2}<\tau_i(\alpha)<\frac{1}{2}}z[(1/2+\tau_i(\alpha))+z(1/2-\tau_i(\alpha))]P_{\tau_i(\alpha)})(-\mathcal{L}_\alpha)^{-1},
\end{equation}
we see that
\begin{equation}
\label{S2}
    A^\alpha(0)\,\rho=P^\alpha_2(-\mathcal{L}_\alpha)^{-1}\,\rho.
\end{equation}
Denote the spectrum of $A^\alpha(0)\rho$ by $\sigma(A^\alpha(0)\rho).$ The following theorem provides the explicit characterization of $\sigma(A^\alpha(0)\rho)$. Let $-\mathcal{L}_D$ be the Lam\'e operator associated with the bilinear form $\langle\cdot,\cdot\rangle$ defined on $H^1_0(D)^3$. Recall the density is piece wise constant taking the value $\rho^1$ in $D$ and $\rho^2$ outside. Consider the Dirichlet eigenvalue problem 
\begin{equation}\label{dirichlet}
\mathcal{L}_Du=\rho^1\eta u \hbox{ for $\eta>0$ and $u\in H^1_0(D)^3$}. 
\end{equation}
The operator $-\mathcal{L}_D$ is invertable and the Dirichlet eigenvalues are given by the reciprocals of the discrete spectrum of $(-\mathcal{L}_D)^{-1}\rho^1$.
\begin{theorem}
\label{ST1}
 \[\sigma\left(A^\alpha(0)\rho\right)=\sigma\left((-\mathcal{L}_D)^{-1}\rho^1\right).\]
\end{theorem}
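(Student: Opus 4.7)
The plan is to exploit the representation $A^\alpha(0)\rho = P_2^\alpha(-\mathcal{L}_\alpha)^{-1}\rho$ from \eqref{S2} and reduce the global spectral problem on $L^2_\#(\alpha,Y)^3$ to the interior Dirichlet eigenvalue problem \eqref{dirichlet} on $D$. Since the range of $A^\alpha(0)\rho$ lies in $W_2^\alpha$, every eigenvector with nonzero eigenvalue must itself lie in $W_2^\alpha$, so it suffices to identify the restriction of the operator to this invariant subspace with $(-\mathcal{L}_D)^{-1}\rho^1$.

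The first step is to establish the isometric identification $W_2^\alpha \cong H_0^1(D)^3$ via zero extension. Any $u \in W_2^\alpha$ satisfies $\mathcal{E}(u)=0$ on the connected periodic extension of $Y\setminus D$, so $u$ is a rigid motion $u(x)=a+b\wedge x$ there. The $\alpha$-quasi-periodicity $u(x+p)=e^{i\alpha\cdot p}u(x)$ with $\alpha\neq 0$ then forces $a=b=0$, by matching the affine and linear parts and choosing $p\in\mathbb{Z}^3$ with $e^{i\alpha\cdot p}\neq 1$. Hence every element of $W_2^\alpha$ vanishes identically on $Y\setminus D$, and the map $v\mapsto \tilde v$ (zero extension) is an isometric isomorphism from $H_0^1(D)^3$ onto $W_2^\alpha$ with respect to $\langle\cdot,\cdot\rangle$, which on these functions reduces to $\int_D \mathbf{C}^1\mathcal{E}(\cdot):\overline{\mathcal{E}(\cdot)}\,dx$.

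The second step is a direct variational computation. For $v\in H_0^1(D)^3$ with zero extension $\tilde v\in W_2^\alpha$, set $w=(-\mathcal{L}_\alpha)^{-1}\rho\tilde v$, characterized by \eqref{weaksolution} as $\langle w,\phi\rangle=(\rho\tilde v,\phi)$ for all $\phi\in H^1_\#(\alpha,Y)^3$. Because $P_2^\alpha$ is the $\langle\cdot,\cdot\rangle$-orthogonal projection onto $W_2^\alpha$, testing against $\phi\in W_2^\alpha$ with $\phi|_D=\phi_D\in H_0^1(D)^3$ yields, after the identification above,
\[
\int_D \mathbf{C}^1\mathcal{E}(P_2^\alpha w):\overline{\mathcal{E}(\phi_D)}\,dx \;=\; \langle P_2^\alpha w,\phi\rangle \;=\; \langle w,\phi\rangle \;=\; \int_D \rho^1 v\cdot\overline{\phi_D}\,dx,
\]
which is exactly the weak formulation of $-\mathcal{L}_D\bigl((P_2^\alpha w)|_D\bigr)=\rho^1 v$. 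Consequently $(P_2^\alpha w)|_D=(-\mathcal{L}_D)^{-1}\rho^1 v$, so the restriction of $A^\alpha(0)\rho$ to $W_2^\alpha$ is unitarily equivalent, via zero extension, to $(-\mathcal{L}_D)^{-1}\rho^1$ on $H_0^1(D)^3$.

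The theorem then follows by matching spectra: nonzero eigenvalues correspond under this isometric equivalence, and both operators are compact and self-adjoint, so $0$ is the unique accumulation point and belongs to each spectrum. The main obstacle I expect is the rigid-motion vanishing argument of the first step; it is the only place where the hypothesis $\alpha\neq 0$ is essential, and is precisely what collapses the global quasi-periodic problem into an interior Dirichlet problem. The $\alpha=0$ case (handled separately in section \ref{Spectrum-periodic}) is genuinely different because constant rigid motions survive on $Y\setminus D$ and obstruct this identification.
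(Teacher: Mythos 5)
Your proposal is correct and follows essentially the same route as the paper: reduce to the invariant subspace $W_2^\alpha$, identify $W_2^\alpha$ with $\tilde H_0^1(D)^3$ (you make the rigid-motion vanishing argument explicit where the paper simply cites $W_2^\alpha\cap\mathcal{R}=\{0\}$), and recognize the resulting variational problem as the interior Dirichlet problem on $D$. The only cosmetic difference is that you compute $P_2^\alpha(-\mathcal{L}_\alpha)^{-1}\rho\tilde v$ directly rather than first restating the spectral problem as $(\rho u,v)=\eta\langle u,v\rangle$ on $W_2^\alpha$, but both land on the same weak formulation of $\mathcal{L}_D u=\rho^1\eta u$.
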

\begin{proof}
First we show that the eigenvalue problem
\[
P^\alpha_2(-\mathcal{L}_\alpha)^{-1}\rho^1 u=\eta u
\]
with $\eta\in\sigma(A^\alpha(0)\rho)$ and eigenfunction $u\in L^2_\#(\alpha, Y)^3$ is equivalent to finding $\eta$ and $u\in W^\alpha_2$ for which
\begin{equation}
\label{S3}
    (\rho u,v)=\eta\langle u,v\rangle,\;\text{for all}\;v\in W^\alpha_2.
\end{equation}
To see \eqref{S3}, note that we have $u=P^\alpha_2u$ and for $v\in H^1_\#(\alpha, Y)^3$,
\begin{equation}
    \label{S4}
    \langle P^\alpha_2(-\mathcal{L}_\alpha)^{-1}\rho u,v\rangle=\eta\langle u,v\rangle=\eta\langle P^\alpha_2u,v\rangle
\end{equation}
hence
\begin{equation}
\label{S5}
   \langle (-\mathcal{L}_\alpha)^{-1}\rho u,P^\alpha_2v\rangle=\eta\langle u,P^\alpha_2v\rangle.
\end{equation}
Since $\langle (-\mathcal{L}_\alpha)^{-1}\rho u,v\rangle=\int_{Y}\rho u\cdot\overline{v}\;dx=(\rho u,v)$ for any $u\in L^2_\#(\alpha,Y)^3)$ and $v\in H^1_\#(\alpha,Y)^3$, equation \eqref{S5} becomes
\begin{equation}
    \label{S6}
   (\rho u,P^\alpha_2v)=\eta\langle u,P^\alpha_2v\rangle,  
\end{equation}
since $P^\alpha_2$ is the projection of $ H^1_\#(\alpha,Y)^3$ onto $W^\alpha_2$, the equivalence follows.\\
We conclude by showing the set of eigenvalues for \eqref{S3} is given by $\sigma((-\mathcal{L}_D)^{-1}\rho^1)$. 
Let $\mathcal{R}$ be the space of rigid motions on $Y\setminus D$ and note that the kernel of the symmetric gradient on $Y\setminus D$ is $\mathcal{R}$. Define $\tilde{H}^1_0(D)^3$ to be the subspace of functions $H^1_0(D)^3$  extended by zero into $Y\setminus D$.
Since $W^\alpha_2\cap\mathcal{R}=0$ we see that $W^\alpha_2=\tilde{H}^1_0(D)^3$. Since $P^\alpha_2v$ is supported in $D$ \eqref{S3} is
\begin{equation}
   \label{S7}
   \eta^{-1}\int_D \rho^1 u\cdot\overline{P^\alpha_2v}=\int_{D}\mathbf{C}^1\mathcal{E}(u):\overline{\mathcal{E}(P^\alpha_2v)}\; dx.
\end{equation}
Now since $P^\alpha_2:H^1_\#(\alpha,Y)^3\mapsto W^\alpha_2=\tilde{H}^1_0(D)^3$ is onto, it follows that $\eta^{-1}$ is a eigenvalue of \eqref{dirichlet}.
\end{proof}

\section{Spectrum in the high contrast limit: Periodic case}
\label{Spectrum-periodic}
For the periodic case, $P^0_2$ is the projection onto $W^0_2$. The limiting operator is written 
\begin{equation}
\label{SS1}
    A^0(0)\,\rho=P^0_2(-\mathcal{L}_0)^{-1}\,\rho.
\end{equation}
Here the operator $(-\mathcal{L}_0)^{-1}$ is compact and self-adjoint on $L^2_\#(0,Y)^3$ and given by
\begin{equation}
\label{SS2}
    (-\mathcal{L}_0)^{-1}u(x)=-\int_Y \mathbf{G}^0(x,y)u(y)\;dy.
\end{equation}
Denote the spectrum of $A^0(0)\rho$ by $\sigma(A^0(0)\rho)$. To characterize this spectrum we introduce the effective mass tensor
\begin{equation}
\label{Massmatrix}
    M(\nu)=I\int_Y\rho(x)\;dx-\nu\sum_{j\in\mathbb{N}}\frac{\int_D\rho^1 \overline{\psi}_j\;dx\otimes\int_D\rho^1\psi_j\;dx}{\nu-\delta^*_j},
\end{equation}  
where $I$ is the $3\times 3$ identity and $\{\delta^*_j\}_{j\in\mathbb{N}}$ are the Dirichlet eigenvalues of $(-\mathcal{L}_D) u=\rho^1\eta u$, $u\in H_0^1(D)^3$ associated with eigenfunctions $\psi_j$ for which $\int_D\rho^1\psi_j\;dx\neq 0$.
\begin{remark}\label{mass matrix}
The effective mass tensor $M(\nu)$ is precisely the effective mass tensor of the high contrast elastic metamaterial \cite{Vondrejc} and \cite{Comi} associated with a sub-wavelength periodic lattice. Matrices of a similar type corresponding to the effective magnetic permeability tensor for photonic metamaterials with artificial magnetism are identified in \cite{bou1}, \cite{bou2}, and also appear in the homogenization theory of high contrast porous media \cite{Zhikov}.
\end{remark}

Next we introduce the sequence of numbers $\{\nu_j\}_{j\in\mathbb{N}}$ given by the positive roots $\nu$ of  the determinant of the effective mass matrix 
\begin{equation}
\label{SS3}
   det\left\{ M(\nu)\right\}=0
\end{equation}  
The following theorem provides the explicit characterization of $\sigma(A^0(0)\rho)$.
\begin{theorem}
\label{SSt6.1}
Let $\{(\delta'^{-1}_j\}_{j\in\mathbb{N}}$ denote the collection of eigenvalues for  $(-\mathcal{L}_D)^{-1}\rho^1$ associated with eigenfunctions $\psi_j$ of \eqref{S2} for which $\int_D \rho^1\psi_j=0$. Then $\sigma(A^0(0)\rho)=\{\delta'^{-1}_j\}_{j\in\mathbb{N}}\cup\{\nu^{-1}_j\}_{j\in\mathbb{N}}$.
\end{theorem}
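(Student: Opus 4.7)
My plan is to mimic the reduction carried out in Theorem~\ref{ST1}, but now using the nontrivial representation of $W^0_2$ provided by Lemma~\ref{W2} to produce a coupling between the Dirichlet modes of $-\mathcal{L}_D$ and a finite-dimensional system whose characteristic equation is $\det M(\nu)=0$.

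First I would convert the operator eigenvalue equation $P^0_2(-\mathcal{L}_0)^{-1}\rho u=\eta u$ into its weak form on $W^0_2$: exactly as in the derivation of \eqref{S3}, any nontrivial eigenfunction must lie in $W^0_2$ and satisfy $(\rho u,v)=\eta\langle u,v\rangle$ for all $v\in W^0_2$. Then I would parametrize $W^0_2$ via Lemma~\ref{W2}: write $u=\tilde u - c_{\tilde u}1_Y$ and $v=\tilde v - c_{\tilde v}1_Y$ with $\tilde u,\tilde v\in\tilde H^1_0(D)^3$ and $c_{\tilde u}=\langle\rho\rangle^{-1}\int_D\rho^1\tilde u\,dx$. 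Because $\mathcal{E}(1_Y)=0$, the stiffness side reduces to $\langle u,v\rangle=\int_D \mathbf{C}^1\mathcal{E}(\tilde u):\overline{\mathcal{E}(\tilde v)}\,dx$, while the mass side unfolds to
\begin{equation*}
(\rho u,v)=\int_D\rho^1\tilde u\cdot\overline{\tilde v}\,dx - \langle\rho\rangle^{-1}\Bigl(\int_D\rho^1\tilde u\,dx\Bigr)\cdot\overline{\Bigl(\int_D\rho^1\tilde v\,dx\Bigr)},
\end{equation*}
the cross terms collapsing into a single rank-one correction.

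Next I would expand in the Dirichlet basis. Let $\{\psi_j\}\subset\tilde H^1_0(D)^3$ be eigenfunctions of $-\mathcal{L}_D\psi_j=\rho^1\delta_j\psi_j$ normalized by $\int_D\rho^1\psi_i\cdot\overline{\psi_j}\,dx=\delta_{ij}$, write $\tilde u=\sum_j a_j\psi_j$, $\tilde v=\sum_k b_k\psi_k$, and set $m_j=\int_D\rho^1\psi_j\,dx\in\mathbb{C}^3$. Substituting and equating coefficients of each $\overline{b_k}$ yields the infinite system
\begin{equation*}
a_k(1-\eta\delta_k)=\langle\rho\rangle^{-1}(V,m_k),\qquad V:=\sum_j a_j m_j,
\end{equation*}
where $(\cdot,\cdot)$ is the Hermitian inner product on $\mathbb{C}^3$. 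I would then split indices into $I_0=\{k:m_k=0\}$ and $I_1=\{k:m_k\neq 0\}$. On $I_0$ the right-hand side vanishes, so the only admissible eigenvalues are $\eta=\delta_k^{-1}=(\delta'_j)^{-1}$, which delivers the first family in the theorem.

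For $I_1$, assuming $\eta\neq\delta_k^{-1}$ for every $k\in I_1$, I solve $a_k=(V,m_k)/(\langle\rho\rangle(1-\eta\delta_k))$ and substitute back into $V=\sum_{k\in I_1} a_k m_k$, using the identity $(V,m_k)m_k=(m_k\otimes\overline{m_k})V$, to obtain
\begin{equation*}
\Bigl(\langle\rho\rangle I-\nu\sum_{k\in I_1}\frac{m_k\otimes\overline{m_k}}{\nu-\delta^*_k}\Bigr)V=0,\qquad \nu=\eta^{-1}.
\end{equation*}
The matrix here is the transpose of $M(\nu)$ from \eqref{Massmatrix} (note $\det A=\det A^T$), so nontrivial $V$ forces $\det M(\nu)=0$, i.e.\ $\nu=\nu_j$ and $\eta=\nu_j^{-1}$. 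Conversely, given such a root, the construction reverses to produce a bona fide eigenfunction in $W^0_2$. Together the two cases give $\sigma(A^0(0)\rho)=\{(\delta'_j)^{-1}\}\cup\{\nu_j^{-1}\}$.

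The main obstacle I anticipate is bookkeeping rather than analysis: one must track the tensor convention in $M(\nu)$ carefully (which factor is conjugated) and handle eigenvalue multiplicities, in particular verifying that values $\eta=(\delta^*_k)^{-1}$ for $k\in I_1$ arise only as roots of $\det M(\nu)=0$ when the numerator rank-one correction degenerates, so that no spurious eigenvalues are introduced and no genuine ones are missed.
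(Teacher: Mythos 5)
Your proposal is correct and follows essentially the same route as the paper: reduce to the weak problem $(\rho u,v)=\eta\langle u,v\rangle$ on $W^0_2$, parametrize $W^0_2$ via Lemma~\ref{W2}, expand in the Dirichlet basis, treat the zero-mean and nonzero-mean cases separately, and recover $\det M(\nu)=0$ as the self-consistency condition. The only cosmetic difference is organizational: the paper first solves the forced boundary-value problem $\mathcal{L}_D\tilde u+\nu\rho^1\tilde u=\nu\rho^1\gamma$ in $D$ and then closes the system by integrating $\rho u$ over $Y$, whereas you read off the same coefficient equations $a_k(1-\eta\delta_k)=\langle\rho\rangle^{-1}(V,m_k)$ directly by testing against each $\psi_k$; these two computations are equivalent, and your ``transpose'' remark is fine since $M(\nu)$ is Hermitian for real $\nu$.
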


\begin{proof} We argue as in the previous section to find that the eigenvalue problem 
\[
P^0_2(-\mathcal{L}_0)^{-1}\rho=\eta u
\]
with $\eta\in\sigma(A^0(0)\rho)$ and eigenfunction $u\in L^2_\#(0,Y)^3$ is equivalent to finding $\eta$ and $u\in W^0_2$ for which 
\begin{equation}
\label{SS4}
     (\rho u,v)=\eta\langle u,v\rangle,\;\text{for all}\;v\in W^0_2.
\end{equation}
We show that the eigenvalues $\eta_j$ for \eqref{SS4} are given by the alternative:
\begin{equation}\label{alt}
\eta_j=(\delta'_j)^{-1},  \hbox{  or  } \eta_j=(\nu_j)^{-1}.
\end{equation}
From \eqref{H9} we have the dichotomy: $\int_D\tilde{u}\;dx=0$ and $u=\tilde{u}\in\tilde{H}^1_0(D)^3$ or $\int_D\tilde{u}\;dx\neq0$ and $u=\tilde{u}-\gamma1_Y$ with $\gamma=\langle\rho\rangle^{-1}\int_D\rho^1\tilde{u}\;dx$. For the first case that the eigenfunction belongs to $\tilde{H}^1_0(D)^3$ and for $v\in W^0_2$ given by
\begin{equation}
\label{SS5}
    v=\tilde{v}-\langle\rho\rangle^{-1}\big(\int_D\rho^1\tilde{v}\;dx\big)1_Y \;\text{for}\; \tilde{v}\in\tilde{H}^1_0(D)^3
\end{equation}
the problem \eqref{SS4} becomes
\begin{equation}
\label{SS6}
     \int_D \rho^1 u\cdot\overline{\tilde{v}}=\eta\int_{D}\mathbf{C}^1\mathcal{E}(u):\overline{\mathcal{E}(\tilde{v})}\; dx,\;\text{for all}\;\tilde{v}\in\tilde{H}^1_0(D)^3,
\end{equation}
and we conclude that $\tilde{u}$ is a Dirichlet eigenfunction with zero average over $D$ so $\eta\in \{\delta'^{-1}_j\}_{j\in\mathbb{N}}$. For the second case, we have $u\in W^0_2$ and again
\begin{equation}
\label{SS7}
     \int_D \rho^1 u\cdot\overline{\tilde{v}}=\eta\int_{D}\mathbf{C}^1\mathcal{E}(u):\overline{\mathcal{E}(\tilde{v})}\; dx,\;\text{for all}\;\tilde{v}\in\tilde{H}^1_0(D)^3.
\end{equation}
Writing $u=\tilde{u}-\gamma1_Y$ and integration by parts in \eqref{SS7} shows that $\tilde{u}\in\tilde{H}^1_0(D)^3$ is the solution of
\begin{equation}
    \label{SS8}
    \mathcal{L}_D\tilde{u}+\nu\rho^1\tilde{u}=\nu\rho^1\gamma\;\text{for}\;x\in D.
\end{equation}
Since $\tilde{u}\in\tilde{H}^1_0(D)^3$ we can write
\begin{equation}
\label{SS9}
    \tilde{u}=\sum_{j=1}^{\infty}c_j\psi_j
\end{equation}
where, $\psi_j$ are the Dirichlet eigenfunctions of \eqref{dirichlet} associated with eigenvalue $\delta_j$ extended by zero to $Y$. Substitution of \eqref{SS9} into \eqref{SS8} yields
\begin{equation}
\label{SS10}
   \sum_{j=1}^{\infty}(-\delta_j\rho^1+\nu\rho^1)c_j\psi_j=\nu\rho^1\gamma.
\end{equation}
Multiplying both sides of \eqref{SS10} by $\overline{\psi}_k$ over $D$ and  $\int_D\rho^1\psi_i\cdot\overline{\psi}_j dx=\delta_{ij}$ shows that $\tilde{u}$ is given by
\begin{equation}
\label{SS11}
    \tilde{u}=\nu\sum_{k\in\mathbb{N}}\frac{\gamma\cdot\,\int_D\rho^1\overline{\psi}_k}{\nu-\delta^*_k}\psi_k,
\end{equation}
where $\delta^*_k$ correspond to Dirichlet eigenvalues associated with eigenfunctions for which  $\int_D\rho^1\psi_k\;dx\neq0$. Hence
\begin{equation}
\label{SS112}
{u}=\nu\sum_{k\in\mathbb{N}}\frac{\gamma\cdot\,\int_D\rho^1\overline{\psi}_k}{\nu-\delta^*_k}\psi_k-\gamma.
\end{equation}
To find $\nu$, we multiply both sides of \eqref{SS112} by $\rho(x)$ and integrate both sides over $Y$ to recover the identity
\begin{equation}
\label{SS113}
  \operatorname{det} \left\{ M(\nu)\right\}=0.
\end{equation}
Hence we conclude that $\eta\in\{\nu^{-1}_i \}$ and the proof is complete.
\end{proof}
We conclude with a varational characterization of the eigenvalues. Let $\{\delta_j\}_{j\in\mathbb{N}}$ denote all the Dirichlet eigenvalues of $(-\mathcal{L}_D) u=\rho^1\delta u$,  $u\in H^1_0(D)^3$, i.e., $\{\delta_j\}_{j\in\mathbb{N}}=\{\delta^{'}_j\}_{j\in\mathbb{N}}\cup \{\delta_j^\ast\}_{j\in\mathbb{N}}$.
Then one readily obtains the min-max characterizations of $\eta^{-1}_j$ and $\delta_j$ given by
\begin{lemma}
\label{minmax}
\begin{equation}
    \label{deltaminmax}
    \delta_j=\min_{S^j\subseteq H^1_0(Y)^3}\Big\{\max_{0\neq u\in S^j}\frac{\int_Y\mathbf{C}^1\mathcal{E}(u):\mathcal{E}(\overline{u})dx}{(\rho u,u)}\Big\}
\end{equation}
\begin{equation}
\label{numinmax}
{\eta^{-1}_j}=\min_{S^j\subseteq H_0^1(D)^3}\Big\{\max_{0\neq u\in S^j}\frac{\int_Y\mathbf{C}^1\mathcal{E}(u):\mathcal{E}(\overline{u})dx}{(\rho u,u)-\langle\rho\rangle^{-1}|\int_Y \rho u|^2}\Big\}
\end{equation}
\end{lemma}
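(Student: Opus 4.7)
The plan is to derive both identities from the Courant--Fischer--Weyl min-max principle applied to appropriate generalized eigenvalue problems, together with the explicit parametrization of $W_2^0$ furnished by Lemma~\ref{W2}.

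For the first identity, the Dirichlet problem \eqref{dirichlet} is the classical generalized eigenvalue problem on $H^1_0(D)^3$ with the coercive symmetric form $a(u,v) = \int_D \mathbf{C}^1\mathcal{E}(u):\overline{\mathcal{E}(v)}\,dx$ and the positive $L^2$ form $(\rho^1 u,v)_D$. The compactness of the embedding $H^1_0(D)^3 \hookrightarrow L^2(D)^3$ lets the classical min-max theorem apply directly. Extending any $u \in H^1_0(D)^3$ by zero across $\partial D$ converts integrals over $D$ into integrals over $Y$ against the piecewise constant weight $\rho$ and leaves the bilinear form untouched, which yields the stated formula.

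For the second identity, Theorem~\ref{SSt6.1} together with the chain of equivalences \eqref{SS4}--\eqref{SS112} in its proof identifies $\{\eta_j\}$ with the spectrum of the generalized eigenvalue problem $(\rho U, V) = \eta \langle U, V\rangle$ for all $V \in W_2^0$, with $U \in W_2^0$. Since $W_2^0 \hookrightarrow L^2_\#(0,Y)^3$ is compact, the min-max principle applied with $\langle\cdot,\cdot\rangle$ in the numerator gives
\[
\eta_j^{-1} = \min_{S^j \subseteq W_2^0}\max_{0 \neq U \in S^j}\frac{\langle U, U\rangle}{(\rho U, U)}.
\]
I then transport this min-max to the simpler space $H^1_0(D)^3$ via the linear bijection $\tilde u \mapsto U := \tilde u - \gamma\,1_Y$ from Lemma~\ref{W2}, where $\gamma = \langle\rho\rangle^{-1}\int_D \rho^1 \tilde u\,dx$. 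Because $\mathcal{E}(1_Y) = 0$, the numerator is invariant under this map: $\langle U, U\rangle = \int_Y \mathbf{C}^1 \mathcal{E}(\tilde u):\overline{\mathcal{E}(\tilde u)}\,dx$. A direct expansion of the denominator, combined with the identity $\gamma\langle\rho\rangle = \int_D \rho^1\tilde u\,dx = \int_Y \rho\tilde u\,dx$, yields
\[
(\rho U, U) = (\rho\tilde u, \tilde u) - \langle\rho\rangle^{-1}\Big|\int_Y \rho\tilde u\,dx\Big|^2.
\]
Since the bijection carries $j$-dimensional subspaces of $H^1_0(D)^3$ to $j$-dimensional subspaces of $W_2^0$, substituting these two identities into the $W_2^0$ min-max produces the stated formula on $H^1_0(D)^3$.

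The principal obstacle is the denominator identity above: it is where the codimension-one mass constraint $\int_Y \rho U = 0$ implicit in the definition of $W_2^0$ enters, and without this subtraction the resulting min-max on $H^1_0(D)^3$ would characterize the Dirichlet eigenvalues $\delta_j$ rather than the $\eta_j^{-1}$ attached to the effective mass matrix. The rest reduces to a routine application of Courant--Fischer.
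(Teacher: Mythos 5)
Your proof is correct and follows essentially the same route as the paper's (very terse) argument: the first identity is the standard Courant–Fischer principle for the Dirichlet problem on $D$, and the second transports the $W_2^0$ min--max back to $H^1_0(D)^3$ via the parametrization $u=\tilde u-\langle\rho\rangle^{-1}\int_Y\rho\tilde u\,dx\cdot 1_Y$ of Lemma~\ref{W2}, using the invariance of the numerator under addition of constants and the identity $(\rho u,u)=(\rho\tilde u,\tilde u)-\langle\rho\rangle^{-1}|\int_Y\rho\tilde u\,dx|^2$. You have simply spelled out in full the steps the paper summarizes in a single sentence.
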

\begin{proof}
The min-max formulation for Dirichlet eigenvalues \eqref{deltaminmax} is standard \cite{TKato3}. The second identity follows from the standard min-max formulation  on noting that for $u\in W_2^0$ that $u=\tilde{u}-{\langle \rho\rangle^{-1}}\int_Y\rho\tilde{u}\,dx$ for $\tilde{u}\in H^1_0(D)^3$ and $0\leq (\rho u,u)=(\rho \tilde{u},\tilde{u})-\langle\rho\rangle^{-1}|\int_Y \rho \tilde{u}|^2$, when $\int_Y\rho\tilde{u}\,dx\not=0$. 
\end{proof}

\section{Radius of convergence and convergence rates}
\label{Radius of conv}
Fix an inclusion geometry specified by the domain $D$. Suppose that $\alpha\in Y^*$ and $\alpha\neq 0$. Recall from Theorem \ref{ST1} that the spectrum of $A^\alpha(0)\rho$ is $\sigma((-\mathcal{L}_D)^{-1}\rho^1)$. Take $\Gamma$ to be a closed contour in $\mathbb{C}$ containing an eigenvalue $\beta^\alpha_j(0)$ in $\sigma((-\mathcal{L}_D)^{-1}\rho^1)$ but no other element of $\sigma((-\mathcal{L}_D)^{-1}\rho^1)$.
\begin{figure}[h]
\centering
\begin{tikzpicture}[xscale=0.70,yscale=0.70]
\draw [-,thick] (-6,0) -- (6,0);
\draw [<->,thick] (0,0) -- (1.5,0);
\node [above] at (1,0) {$d$};
\draw [<->,thick] (1.5,0) -- (3,0);
\node [above] at (2,0) {$d$};
\draw (-4,0.2) -- (-4.0, -0.2);
\node [below] at (-4,0) {$\hat{\beta}^\alpha_{j-1}(0)$};
\draw (0,0.2) -- (0, -0.2);
\node [below] at (0,0) {$\beta^\alpha_j(0)$};
\draw (3,0.2) -- (3, -0.2);
\node [below] at (3,0) {$\check{\beta}^\alpha_{j+1}(0)$};
\draw (0,0) circle [radius=1.5];
\node [right] at (1.1,1.1) {$\Gamma$};
\end{tikzpicture} 
\caption{$\Gamma$}
\end{figure} Define $d$ to be the distance between $\Gamma$ and $\sigma((-\mathcal{L}_D)^{-1})\rho^1)$, i.e.,
\begin{equation}
\label{R1}
d=\operatorname{dist}(\Gamma,\sigma((-\mathcal{L}_D)^{-1}\rho^1)=\inf_{\xi\in\Gamma}\{\operatorname{dist}(\Gamma,\sigma((-\mathcal{L}_D)^{-1}\rho^1).
\}
\end{equation}
The component of the spectrum of $A^\alpha(0)\rho$ inside $\Gamma$ is precisely $\beta^\alpha_j(0)$ and we denote this by $\Sigma'(0)$. The part of the spectrum of $A^\alpha(0)\rho$ in the domain exterior to $\Gamma$ is denoted by $\Sigma''(0)$ and $\Sigma''(0)=\sigma((-\mathcal{L}_D^{-1})\rho^1)\setminus{\beta^\alpha_j(0)}$. The invariant subspace of $A^\alpha(0)\rho$ associated with $\Sigma'(0)$ is denoted by $M'(0)$ with $M'(0)=P(0)L^2_\#(\alpha,Y)^3$.

Suppose the lowest quasi-periodic resonance eigenvalue for the domain $D$ lies inside $-1/2<\tau^-(\alpha)$. It is noted that in the sequel a large and generic class of domains are identified for which there exists $\tau^-$, independent of $\alpha\in Y^\ast$ such that $-1/2<\tau^-\leq\tau^-(\alpha)$. The corresponding upper bound on the set $z\in S^\alpha$ for which $A^\alpha(z)\rho$ is not invertible is given by 
\begin{equation}
    \label{R2}
    z^*=\frac{\tau^-(\alpha)+1/2}{\tau^-(\alpha)-1/2}<0,
\end{equation}
see \ref{P2}. Now set
\begin{equation}
    \label{R3}
    r^*=\frac{\mu_1|\alpha|^2d|z^*|}{\frac{\|\rho\|_{L^\infty(Y)^3}}{1/2-\tau^-(\alpha)}+\mu_1|\alpha|^2d}.
\end{equation}
\begin{theorem}
\label{Rt1}
Separation of spectra and radius of convergence for $\alpha\in Y^*,\alpha\neq0$. The following properties hold for inclusions with domains $D$ that satisfy \eqref{R2}:
\begin{enumerate}
    \item If $|z|<r^*$ then $\Gamma$ lies in the resolvent of both $A^\alpha(0)\rho$ and $A^\alpha(z)\rho$ and thus separates the spectrum of $A^\alpha(z)\rho$ into two parts given by the components of spectrum of $A^\alpha(z)\rho$ inside $\Gamma$ denoted by $\Sigma'(z)$ and components exterior to $\Gamma$ denoted by $\Sigma''(z)$. The invariant subspace of subspace of $A^\alpha(z)\rho$ associated with $\Sigma'(z)$ is denoted by $M'(z)$ with $M'(z)=P(z)L^2_\#(\alpha,Y)^3$.
    
    \item The projection $P(z)$ is holomorhic for $|z|<r^*$ and $P(z)$ is given by 
    \begin{equation}
        \label{R4}
        P(z)=-\frac{1}{2\pi i}\oint_\Gamma R(\zeta,z)\;d\zeta.
    \end{equation}
    
    \item The spaces $M'(z)$ and $M'(0)$ are isomorphic for $|z|<r^*$.
    \item The power series \eqref{P-10} converges uniformly for $z\in\mathbb{C}$ inside $|z|<r^*$.
\end{enumerate}
\end{theorem}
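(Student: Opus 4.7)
The four claims are exactly the conclusions of the standard Kato perturbation framework (\cite{TKato3}, Chap.~VII) once one verifies that the contour $\Gamma$ sits in the resolvent set of $A^\alpha(z)\rho$ for all $|z|<r^*$. My plan is to bootstrap everything from a single operator-norm estimate: show that the Neumann expansion \eqref{P5}
\[
R(\zeta,z)=R(\zeta,0)\sum_{p=0}^\infty \bigl[-(A^\alpha(z)\rho-A^\alpha(0)\rho)R(\zeta,0)\bigr]^p
\]
converges uniformly in $\zeta\in\Gamma$ for $|z|<r^*$. Concretely this reduces to verifying the strict inequality
\[
\bigl\|(A^\alpha(z)\rho-A^\alpha(0)\rho)R(\zeta,0)\bigr\|_{L^2_\#(\alpha,Y)^3\to L^2_\#(\alpha,Y)^3}<1.
\]

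\textbf{Resolvent bound on $R(\zeta,0)$.} Theorem \ref{ST1} identifies the eigenvalue problem for $A^\alpha(0)\rho$ with the symmetric variational problem $(\rho u,v)=\beta\langle u,v\rangle$ on $W_2^\alpha$, so $A^\alpha(0)\rho$ is similar to a self-adjoint operator. This gives the spectral-theorem bound $\|R(\zeta,0)\|\le 1/d$ for every $\zeta\in\Gamma$, with $d$ the distance from $\Gamma$ to $\sigma((-\mathcal{L}_D)^{-1}\rho^1)$ defined in \eqref{R1}.

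\textbf{Bounding the perturbation.} Using the spectral representation \eqref{H34}, write
\[
A^\alpha(z)-A^\alpha(0)=\Bigl[zP^\alpha_1+\sum_{i}c_i(z)P_{\tau_i(\alpha)}\Bigr](-\mathcal{L}_\alpha)^{-1},\qquad c_i(z)=\frac{z}{(1/2-\tau_i(\alpha))(z-z_i)},
\]
where $z_i=(\tau_i(\alpha)+1/2)/(\tau_i(\alpha)-1/2)$. Because $P^\alpha_1$ and the $P_{\tau_i(\alpha)}$ are mutually orthogonal with respect to $\langle\cdot,\cdot\rangle$, the $L^2\to H^1_\#(\alpha,Y)^3$ norm of the bracketed operator is at most $\max\{|z|,\sup_i|c_i(z)|\}$. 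For $|z|<|z^*|$ we use $|z-z_i|\ge |z_i|-|z|$ together with the identity $(1/2-\tau_i)|z_i|=1/2+\tau_i$; the function $\tau\mapsto(1/2+\tau)-(1/2-\tau)|z|$ is monotone increasing in $\tau$, so its infimum over the spectrum is attained at $\tau^-(\alpha)$, yielding the uniform estimate
\[
\sup_i|c_i(z)|\le\frac{|z|}{(1/2-\tau^-(\alpha))(|z^*|-|z|)}.
\]
A duality argument applied to $-\mathcal{L}_\alpha$ combined with the quasi-periodic Poincar\'e--Korn inequality $\|v\|_{L^2}^2\le(\mu_1|\alpha|^2)^{-1}\langle v,v\rangle$ gives $\|(-\mathcal{L}_\alpha)^{-1}\rho\|_{L^2\to L^2}\le\|\rho\|_{L^\infty}/(\mu_1|\alpha|^2)$, so altogether
\[
\bigl\|(A^\alpha(z)-A^\alpha(0))\rho\bigr\|_{L^2\to L^2}\le\frac{\|\rho\|_{L^\infty}}{\mu_1|\alpha|^2}\cdot\frac{|z|}{(1/2-\tau^-(\alpha))(|z^*|-|z|)}
\]
in the regime where the sum dominates the $P^\alpha_1$ term.

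\textbf{Assembling the proof.} Combining the two bounds, the Neumann series converges once
\[
\frac{\|\rho\|_{L^\infty}}{(1/2-\tau^-(\alpha))\,\mu_1|\alpha|^2\,d}\cdot\frac{|z|}{|z^*|-|z|}<1,
\]
which after rearrangement is precisely the condition $|z|<r^*$ of \eqref{R3}. This gives (1): $\Gamma$ lies in the resolvent of $A^\alpha(z)\rho$, and the spectrum is split into $\Sigma'(z)$ and $\Sigma''(z)$. The representation \eqref{R4} and analyticity of $P(z)$ in (2) follow by term-by-term integration of the Neumann series over $\Gamma$, justified by uniform convergence. Claim (3) is then Kato's criterion \cite{TKato3}, I.Thm.6.32: shrinking $\Gamma$ if necessary we find $\|P(z)-P(0)\|<1$ for $|z|<r^*$, forcing the invariant subspaces to be isomorphic and to share the dimension $m$. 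Finally, (4) follows by substituting the Neumann series into the Kato formula \eqref{P-11} for $\beta^\alpha_n$ and using uniform convergence on $\Gamma$ to exchange sum and contour integral, producing the convergent series \eqref{P-10} inside $|z|<r^*$.

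\textbf{Main obstacle.} The delicate point is not the Kato machinery but the uniform control of the infinite family $\{c_i(z)\}_i$. The monotonicity observation that identifies $\tau^-(\alpha)$ as the worst case is what locks the final constant into the precise form of $r^*$; without it, one obtains a suboptimal radius. A secondary technical point is verifying the Poincar\'e--Korn constant $(\mu_1|\alpha|^2)^{-1}$ is genuinely attainable for $\alpha\neq 0$, which uses the Fourier expansion of quasi-periodic functions; this is where the hypothesis $\alpha\neq 0$ is essential, and it explains why the periodic case $\alpha=0$ must be treated separately.
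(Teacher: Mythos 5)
Your proof is correct and follows the same overall strategy as the paper's Section~12: reduce everything to the Neumann-series condition $\|(A^\alpha(z)\rho-A^\alpha(0)\rho)R(\zeta,0)\|<1$, bound $\|R(\zeta,0)\|\le 1/d$, bound $\|A^\alpha(z)-A^\alpha(0)\|_{L^2\to L^2}$ via the quasi-periodic Poincar\'e inequality applied twice (giving $(\mu_1|\alpha|^2)^{-1}$) and the operator norm of $(T_k^\alpha)^{-1}-P_2^\alpha$, then rearrange to get exactly $r^*$ and invoke Kato for (1)--(3). Where you diverge, and in a way that is slightly cleaner, is in bounding the coefficients: you write $c_i(z)=z/[(1/2-\tau_i)(z-z_i)]$ and apply the reverse triangle inequality $|z-z_i|\ge|z_i|-|z|$ directly, whereas the paper first discards $\operatorname{Im}(z)$, introduces the real-variable function $g(\operatorname{Re}(z),x)$, locates its pole $\hat{x}(u)$, and runs a monotonicity argument (Lemma \ref{Dl2}, Corollary \ref{Dc3}) to reach the same estimate $(1/2-\tau^-(\alpha))^{-1}(|z^*|-|z|)^{-1}$; your route avoids the pole analysis while producing the identical constant. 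Two small precision points worth tightening: (i) the bracketed operator $zP_1^\alpha+\sum_ic_i(z)P_{\tau_i}$ has an $H^1_\#\to H^1_\#$ operator norm equal to $\max\{|z|,\sup_i|c_i|\}$, not an $L^2\to H^1$ norm -- the two Poincar\'e factors enter when you convert $L^2\to H^1$ on the right and $H^1\to L^2$ on the left, exactly as the paper does in \eqref{D10}; and (ii) the bound $\|R(\zeta,0)\|\le 1/d$ is justified by noting $A^\alpha(0)\rho$ is self-adjoint on $W_2^\alpha$ in the $\langle\cdot,\cdot\rangle$ inner product (the paper calls this ``elementary arguments''), which as you say amounts to similarity to a self-adjoint operator; a reader should be aware the self-adjointness is in the energy inner product, not $L^2$.
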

Suppose now $\alpha=0$. Recall from Theorem \ref{SSt6.1} that the limit spectrum for $A^\alpha(0)\rho$ is $\sigma(A^\alpha(0)\rho)=\{\delta'^{-1}_j\}_{j\in\mathbb{N}}\cup\{\nu^{-1}_j\}_{j\in\mathbb{N}}$. For this case take $\Gamma$ to be the closed contour in $\mathbb{C}$ containing an eigenvalue $\beta^0_j(0)$ in $\sigma(A^0(0)\rho)$ but no other element of $\sigma(A^0(0)\rho)$ and define
\begin{equation}
    \label{R5}
    d=\inf_{\zeta\in\Gamma}\{\operatorname{dist}(\zeta,\sigma(A^0(0)\rho)\}.
\end{equation}
Suppose the lowest quasi-periodic resonance eigenvalue for the domain $D$ lies inside $-1/2<\tau^-(0)<0$ and the corresponding upper bound on $S^0$ is given by
\begin{equation}
\label{R6}
    z^*=\frac{\tau^-(0)+1/2}{\tau^-(0)-1/2}<0.
\end{equation}
Set 
\begin{equation}
    \label{R7}
   r^*=\frac{4\pi^2\mu_1d|z^*|}{\frac{\|\rho\|_{L^\infty(Y)^3}}{1/2-\tau^-(0)}+4\pi^2\mu_1d}.
\end{equation}
\begin{theorem}
\label{Rt2}
Separation of spectra and radius of convergence for $\alpha=0$.\\
The following properties hold for inclusions with domains $D$ that satisfy \eqref{R6}:
\begin{enumerate}
    \item  If $|z|<r^*$ then $\Gamma$ lies in the resolvent of both $A^0(0)\rho$ and $A^0(z)\rho$ and thus separates the spectrum of $A^0(z)\rho$ into two parts given by the components of spectrum of $A^0(z)\rho$ inside $\Gamma$ denoted by $\Sigma'(z)$ and components exterior to $\Gamma$ denoted by $\Sigma''(z)$. The invariant subspace of subspace of $A^0(z)\rho$ associated with $\Sigma'(z)$ is denoted by $M'(z)$ with $M'(z)=P(z)L^2_\#(0,Y)^3)$.
    
    \item The projection $P(z)$ is holomorhic for $|z|<r^*$ and $P(z)$ is given by 
    \begin{equation}
        \label{R8}
        P(z)=-\frac{1}{2\pi i}\oint_\Gamma R(\zeta,z)\;d\zeta.
    \end{equation}
      
    \item The spaces $M'(z)$ and $M'(0)$ are isomorphic for $|z|<r^*$.
    \item The power series \eqref{P-10}  converges uniformly for $z\in\mathbb{C}$ inside $|z|<r^*$.
\end{enumerate}
\end{theorem}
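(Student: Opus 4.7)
The plan is to follow the same Kato-style resolvent perturbation scheme used for Theorem \ref{Rt1}, but with the two ingredients that differentiate the periodic setting $\alpha=0$: the limit spectrum is described by Theorem \ref{SSt6.1} instead of Theorem \ref{ST1}, and the spectral gap of the periodic Lam\'e operator $-\mathcal{L}_0$ is controlled by the lowest nonzero Fourier mode $|2\pi n|^2\geq 4\pi^2$ (since the $n=0$ mode is excluded by the mean-zero constraint defining $H^1_\#(0,Y)^3$), so that $\|(-\mathcal{L}_0)^{-1}\|_{L^2\to L^2}\leq(4\pi^2\mu_1)^{-1}$. This replaces the $(\mu_1|\alpha|^2)^{-1}$ factor appearing in the quasi-periodic bound of Theorem \ref{Rt1} and accounts for the occurrence of $4\pi^2\mu_1$ in the definition \eqref{R7} of $r^*$. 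The central task is therefore to produce a uniform contraction estimate $\|(A^0(z)\rho-A^0(0)\rho)R(\zeta,0)\|_{L^2\to L^2}<1$ for every $\zeta\in\Gamma$ and every $|z|<r^*$; once this is in hand, items (1)--(4) follow from the standard resolvent/contour-integral machinery of \cite{TKato3}.

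The bulk of the work is the operator-norm estimate. From the spectral representation of Theorem \ref{Ht2.5} and the corresponding expression for $(T_k^0)^{-1}$ analogous to \eqref{H34}, one has
\[
A^0(z)-A^0(0)=\Bigl(zP^0_1+\sum_{-1/2<\tau_i(0)<1/2}\frac{z}{(1/2+\tau_i(0))+z(1/2-\tau_i(0))}P_{\tau_i(0)}\Bigr)(-\mathcal{L}_0)^{-1}.
\]
Factoring each denominator as $(1/2-\tau_i(0))(z-z_i)$ with pole $z_i=-(1/2+\tau_i(0))/(1/2-\tau_i(0))$, and using monotonicity of $t\mapsto(1/2+t)-(1/2-t)|z|$ in $t$, one obtains the uniform lower bound $(1/2-\tau_i(0))|z-z_i|\geq(1/2-\tau^-(0))(|z^*|-|z|)$ for all $i$ and all $|z|<|z^*|$. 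This yields the operator bound $\|(T_k^0)^{-1}-P^0_2\|_{H^1_\#\to H^1_\#}\leq|z|/[(1/2-\tau^-(0))(|z^*|-|z|)]$, and chaining with the Lam\'e bound and the multiplication factor $\|\rho\|_{L^\infty}$ gives
\[
\|A^0(z)\rho-A^0(0)\rho\|_{L^2\to L^2}\leq\frac{|z|\,\|\rho\|_{L^\infty}}{(1/2-\tau^-(0))(|z^*|-|z|)\cdot 4\pi^2\mu_1}.
\]
Multiplying by $\|R(\zeta,0)\|\leq 1/d$ and demanding the product be strictly less than one produces exactly the inequality $|z|<r^*$ on solving for $|z|$, which establishes the contraction on the required disk.

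With the contraction established, the remaining items are routine. The Neumann series \eqref{P5} for $R(\zeta,z)$ converges absolutely and uniformly in $\zeta\in\Gamma$ for $|z|<r^*$, placing $\Gamma$ in the resolvent set of $A^0(z)\rho$ and separating the spectrum into $\Sigma'(z)$ and $\Sigma''(z)$, proving (1). Termwise contour integration gives the holomorphic expansion \eqref{P6} of $P(z)$, establishing (2). Because $P(z)$ is operator-norm continuous and projections of nearby operators with equal range dimension are isomorphic (\cite{TKato3}), $\operatorname{rank}P(z)=\operatorname{rank}P(0)$ and $M'(z)\cong M'(0)$, giving (3). Finally, substituting the Neumann series into \eqref{P9} and applying the Kato integration-by-parts manipulation recovers the weighted-mean series \eqref{P-10}, whose uniform convergence on $|z|<r^*$ is inherited from the geometric decay of the Neumann terms, proving (4).

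The main obstacle is precisely the uniform bound on the spectral sum: a priori the eigenvalues $\tau_i(0)$ may cluster at $\pm k_0$ and $0$ in $W^0_3$, and controlling each denominator $(1/2+\tau_i(0))+z(1/2-\tau_i(0))$ uniformly in $i$ without introducing spurious factors is nontrivial. The algebraic factorization and the monotonicity argument sketched above collapse the control to the single quantity $|z^*|$, which depends only on the extremal resonance $\tau^-(0)$ guaranteed by the hypothesis; beyond this step, every remaining ingredient is a structural repetition of the proof of Theorem \ref{Rt1} with Theorem \ref{SSt6.1} substituted for Theorem \ref{ST1} in choosing $\Gamma$ around the limit eigenvalue $\beta^0_j(0)$.
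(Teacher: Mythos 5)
Your proposal is correct and follows the same overall architecture as the paper: replace the quasi-periodic Poincar\'e constant $(\mu_1|\alpha|^2)^{-1}$ by $(4\pi^2\mu_1)^{-1}$ via the lowest non-trivial Fourier mode of the periodic Green's function, establish the contraction bound $\|(A^0(z)\rho-A^0(0)\rho)R(\zeta,0)\|<1$ on the disk $|z|<r^*$, and then invoke the standard Kato resolvent/contour machinery to conclude (1)--(4). The paper's own proof of Theorem~\ref{Rt2} is in fact nothing more than ``proceed along identical lines to Theorem~\ref{Rt1}, substituting Lemma~\ref{Dl4} for Lemma~\ref{Dl1}.''

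The one place you deviate in a substantive way is in how the lower bound on the denominators $(1/2+\tau_i)+z(1/2-\tau_i)$ is obtained. The paper controls the real part $\operatorname{Re}(z)$, introduces the auxiliary function $g(u,x)=(\tfrac12+x+u(\tfrac12-x))^{-2}$, locates its pole $\hat{x}(u)$, establishes monotonicity in $x$ on either side of the pole, and then proves Lemma~\ref{Dl2} to identify the range of $u$ for which $g(u,\cdot)$ is monotone on $[\tau^-,\tau^+]$. You instead factor each term as $(1/2-\tau_i)(z-z_i)$ with $z_i$ the corresponding pole of $A^\alpha$, apply the reverse triangle inequality to get $|z-z_i|\geq|z_i|-|z|$, convert to $(1/2+\tau_i)-(1/2-\tau_i)|z|$, and minimize this manifestly linear function of $\tau_i$ at $\tau^-$. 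Both routes yield exactly the lower bound $(1/2-\tau^-(0))(|z^*|-|z|)$ and hence the same $r^*$ in \eqref{R7}, but your algebraic route is shorter, avoids the case analysis of Lemma~\ref{Dl2}, and does not require isolating $\operatorname{Re}(z)$ from $\operatorname{Im}(z)$. This is a legitimate simplification, not just a rephrasing, and it carries over unchanged to the proof of Theorem~\ref{Rt1}. The remainder of your proposal --- the chain $\|A^0(z)-A^0(0)\|\leq(4\pi^2\mu_1)^{-1}\|(T_k^0)^{-1}-P_2^0\|$, the $\|\rho\|_{L^\infty}$ and $\|R(\zeta,0)\|\leq d^{-1}$ factors, and the contour-integral argument for $P(z)$ and the isomorphism $M'(z)\cong M'(0)$ --- matches the paper exactly.
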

Next we provide an explicit representation of the integral operators appearing in the series expansion for the eigenvalue group.
\begin{theorem}
\label{Rt3}
Representation of integral operators in the series expansion of eigenvalues. Let $P^\alpha_3$ be the projection onto the orthogonal complement of $W^\alpha_1\oplus W^\alpha_2$  and let $\tilde{I}$ denote the identity on $L^2(\partial D)^3$, then the explicit representation for the operators $A_n^\alpha$ in the expansion \eqref{P-10},\eqref{P-11} is given by
\begin{equation}
\label{R9}
\begin{aligned}
    A^\alpha_1&=[\mathcal{S}^\alpha_D(\mathcal{K}^{-\alpha}_{D})^*+\frac{1}{2}\tilde{I})(\mathcal{S}^\alpha_D)^{-1})^{-1}P^\alpha_3+P^\alpha_1](-\mathcal{L}_\alpha)^{-1}\;\text{and}\\
     A^\alpha_n&=\mathcal{S}^\alpha_D((\mathcal{K}^{-\alpha}_{D})^*+\frac{1}{2}\tilde{I})^{-1}(\mathcal{S}^\alpha_D)^{-1})^{-1}[\mathcal{S}^\alpha_D((\mathcal{K}^{-\alpha}_{D})^*-\frac{1}{2}\tilde{I})\mathcal{S}^\alpha_D((\mathcal{K}^{-\alpha}_{D})^*+\frac{1}{2}\tilde{I})^{-1}(\mathcal{S}^\alpha_D)^{-1}]^{n-1}P^\alpha_3(-\mathcal{L}_\alpha)^{-1}.
    \end{aligned}
\end{equation}
\end{theorem}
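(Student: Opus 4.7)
The plan is to read off the Taylor coefficients of the explicit spectral representation of $A^\alpha(z)$ produced by Theorem \ref{Ht2.5} and then rewrite the resulting spectral sums in terms of layer potentials by using the similarity formula of Theorem \ref{Ht2.3}. Inverting the spectral expression for $T^\alpha_k$ term by term gives
\begin{equation*}
A^\alpha(z)=\Bigl(zP^\alpha_1+P^\alpha_2+\sum_{-1/2<\tau_i(\alpha)<1/2}\frac{z}{(\tfrac12+\tau_i(\alpha))+z(\tfrac12-\tau_i(\alpha))}P_{\tau_i(\alpha)}\Bigr)(-\mathcal{L}_\alpha)^{-1}.
\end{equation*}
With $a=\tfrac12+\tau_i(\alpha)$ and $b=\tfrac12-\tau_i(\alpha)$, the scalar identity $z/(a+bz)=\sum_{n\geq 1}(-1)^{n-1}a^{-n}b^{n-1}z^n$ is a convergent geometric series whenever $|z|<a/|b|$, which holds uniformly in $i$ under the standing hypothesis $\tau^-(\alpha)>-\tfrac12$ of Section \ref{Radius of conv}.

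Since $zP^\alpha_1$ contributes only at first order, collecting powers of $z$ yields
\begin{equation*}
A^\alpha_1=\Bigl[P^\alpha_1+\sum\frac{1}{\tfrac12+\tau_i(\alpha)}P_{\tau_i(\alpha)}\Bigr](-\mathcal{L}_\alpha)^{-1},
\end{equation*}
\begin{equation*}
A^\alpha_n=\Bigl[\sum\frac{(-1)^{n-1}(\tfrac12-\tau_i(\alpha))^{n-1}}{(\tfrac12+\tau_i(\alpha))^n}P_{\tau_i(\alpha)}\Bigr](-\mathcal{L}_\alpha)^{-1}\quad\text{for }n\geq 2.
\end{equation*}
On $W^\alpha_3$ the decomposition $T=\sum\tau_i(\alpha)P_{\tau_i(\alpha)}$ combined with bounded functional calculus gives
\begin{equation*}
\sum\frac{(-1)^{n-1}(\tfrac12-\tau_i(\alpha))^{n-1}}{(\tfrac12+\tau_i(\alpha))^n}P_{\tau_i(\alpha)}=(T+\tfrac12 I)^{-1}\bigl[(T-\tfrac12 I)(T+\tfrac12 I)^{-1}\bigr]^{n-1},
\end{equation*}
and Theorem \ref{Ht2.3} together with the isomorphism of Lemma \ref{Hl2.2} intertwines this identity with the Neumann--Poincar\'e operator: conjugation by $\mathcal{S}^\alpha_D$ produces $(T\pm\tfrac12 I)^{\pm 1}=\mathcal{S}^\alpha_D((\tilde{\mathcal{K}}^{-\alpha}_D)^*\pm\tfrac12\tilde I)^{\pm 1}(\mathcal{S}^\alpha_D)^{-1}$ on $W^\alpha_3$. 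Precomposing the spectral sum with $P^\alpha_3$, which is needed because $(\mathcal{S}^\alpha_D)^{-1}$ is defined only on $W^\alpha_3$, extends the identity to all of $H^1_\#(\alpha,Y)^3$ and delivers precisely the formulas displayed for $A^\alpha_1$ and $A^\alpha_n$.

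The delicate step is the bounded invertibility of $T+\tfrac12 I$ on $W^\alpha_3$, which is required both for termwise convergence of the geometric expansion and for the functional calculus step. This reduces to the spectral gap $\tau^-(\alpha)>-\tfrac12$: the essential spectrum of $T|_{W^\alpha_3}$ consists of the accumulation points $\{\pm k_0,0\}\subset(-\tfrac12,\tfrac12)$ (Lemma \ref{Hl2.4} together with the bound $|k_0|\leq 3/8$ obtained immediately afterwards), so only the discrete quasi-periodic eigenvalues could in principle approach $-\tfrac12$, and these are excluded by the hypothesis used throughout Section \ref{Radius of conv}. Once this gap is secured, the remainder of the argument is algebraic bookkeeping of operator products and projections.
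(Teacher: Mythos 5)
Your proposal is correct and essentially reproduces the paper's argument in Section~\ref{sec-layerpotential}: expand the geometric series for the resolvent of $T_k^\alpha$ in powers of $z$, read off the Taylor coefficients as spectral sums in the projections $P_{\tau_i(\alpha)}$, and conjugate by $\mathcal{S}^\alpha_D$ via the intertwining $T=\mathcal{S}^\alpha_D(\tilde{\mathcal{K}}^{-\alpha}_D)^*(\mathcal{S}^\alpha_D)^{-1}$ of Theorem~\ref{Ht2.3}. Your rephrasing of the paper's manipulations \eqref{L11}--\eqref{L12} as bounded functional calculus in $T$, namely $(T+\tfrac12 I)^{-1}\bigl[(T-\tfrac12 I)(T+\tfrac12 I)^{-1}\bigr]^{n-1}$, is equivalent, and your remark that invertibility of $T+\tfrac12 I$ on $W^\alpha_3$ rests on the spectral gap above $-\tfrac12$ makes explicit a point the paper leaves implicit.
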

\begin{theorem}
\label{Rt4}
Error estimates for the eigenvalue expansion.
\begin{enumerate}
\item Let $\alpha\neq 0$, and suppose $D, z^*,$ and $r^*$ are as in Theorem \ref{Rt1}. Then the following error estimate for the series \eqref{P-10} holds for $|z|<r^*$:
\begin{equation}
    \label{R10}
    \Big|\hat{\beta}^\alpha(z)-\sum_{n=0}^{p}z^n\beta^\alpha_n\Big|\leq \frac{d|z|^{p+1}}{(r^*)^p(r^*-|z|)}.
\end{equation}
\item
Let $\alpha= 0$, and suppose $D, z^*,$ and $r^*$ are as in Theorem \ref{Rt2}. Then the following error estimate for the series \eqref{P-10} holds for $|z|<r^*$:
\begin{equation}
    \label{R11}
    \Big|\hat{\beta}^0(z)-\sum_{n=0}^{p}z^n\beta^0_n\Big|\leq \frac{d|z|^{p+1}}{(r^*)^p(r^*-|z|)}.
\end{equation}
\end{enumerate}
\end{theorem}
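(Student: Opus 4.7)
The plan is to derive the remainder bound from the Neumann-series representation of the resolvent already established in Section \ref{Representation}. Writing
\[
\hat{\beta}^\alpha(z)-\sum_{n=0}^{p}z^n\beta^\alpha_n=\sum_{n=p+1}^{\infty}z^n\beta^\alpha_n,
\]
the task is to control the tail of the coefficient series. From \eqref{P-11} each $\beta_n^\alpha$ is a contour integral over $\Gamma$ of products of $R(\zeta,0)$ and the perturbation coefficients $A^\alpha_{k_j}\rho$, so I would bound it by maximising the integrand and multiplying by the length of $\Gamma$, where I take $\Gamma$ to be the circle of radius $d$ centred at $\beta_0^\alpha$ so that $|\Gamma|=2\pi d$ and $|\zeta-\beta_0^\alpha|=d$ on $\Gamma$.

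The central estimate to establish is the single-step resolvent bound
\[
\bigl\|(A^\alpha(z)\rho-A^\alpha(0)\rho)\,R(\zeta,0)\bigr\|_{L^2_\#(\alpha,Y)^3\to L^2_\#(\alpha,Y)^3}\le \frac{|z|}{r^*}\quad(\zeta\in\Gamma,\ |z|<r^*).
\]
This combines the resolvent estimate $\|R(\zeta,0)\|\le 1/d$ (from the definition of $d$ as the distance from $\Gamma$ to $\sigma(A^\alpha(0)\rho)$) with a norm estimate on $A^\alpha(z)\rho-A^\alpha(0)\rho$ arising from Theorem \ref{Ht2.5}. The latter splits into two contributions: the rigid $W_1^\alpha$-part, controlled via the Korn inequality together with the quasi-momentum lower bound on $-\mathcal{L}_\alpha$ by the factor $1/(\mu_1|\alpha|^2)$ (or $1/(4\pi^2\mu_1)$ in the periodic zero-mean case), and the $W_3^\alpha$-part whose rational prefactor $z[(\tfrac12+\tau_i(\alpha))+z(\tfrac12-\tau_i(\alpha))]$ is controlled, uniformly in $\tau_i(\alpha)\in(-1/2,1/2]$ and for $|z|<|z^*|$, by $|z|/(\tfrac12-\tau^-(\alpha))$. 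Passing from $(-\mathcal{L}_\alpha)^{-1}$ acting on $L^2$ to $(-\mathcal{L}_\alpha)^{-1}\rho$ inserts the factor $\|\rho\|_{L^\infty(Y)^3}$, reproducing precisely the denominator in the definition \eqref{R3} (resp.\ \eqref{R7}) of $r^*$.

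With this single-step estimate in hand, the $n$-th order term in the resolvent expansion \eqref{P5} has norm bounded by $(1/d)\cdot(|z|/r^*)^n$ on $\Gamma$ (one extra factor of $1/d$ for the leading $R(\zeta,0)$). Hence
\[
|\beta_n^\alpha|\le \frac{1}{2m\pi}\,|\Gamma|\,\sup_{\zeta\in\Gamma}|\zeta-\beta_0^\alpha|\,\frac{1}{d}\,\Bigl(\frac{1}{r^*}\Bigr)^n\le \frac{d}{(r^*)^n},
\]
and summing the geometric tail yields
\[
\sum_{n=p+1}^{\infty}|z|^n|\beta_n^\alpha|\le d\sum_{n=p+1}^{\infty}\Bigl(\frac{|z|}{r^*}\Bigr)^n=\frac{d|z|^{p+1}}{(r^*)^p(r^*-|z|)},
\]
giving the stated estimate in part (1). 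Part (2) is identical after replacing $\mu_1|\alpha|^2$ by $4\pi^2\mu_1$ and invoking Theorem \ref{Rt2} in place of Theorem \ref{Rt1}.

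The main obstacle is the quantitative operator-norm estimate on $A^\alpha(z)\rho-A^\alpha(0)\rho$, since the factor $(-\mathcal{L}_\alpha)^{-1}$ is only compact from $L^2$ into $H^1$: one must bound each spectral component of the decomposition in Theorem \ref{Ht2.5} separately to extract the explicit dependence on $\tau^-(\alpha)$ and on $|\alpha|$ that appears in $r^*$, while keeping the resulting bound uniform in the spectral index $i$ and in $\zeta\in\Gamma$. Everything beyond this (contour integration, trace bound, and geometric summation) is routine bookkeeping.
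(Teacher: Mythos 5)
Your final geometric-tail summation and the target coefficient bound $|\beta_n^\alpha|\le d\,(r^*)^{-n}$ match the paper, but the route you propose for obtaining that coefficient bound does not close. You want to estimate $\beta_n^\alpha$ from \eqref{P-11} (equivalently \eqref{P8}) by controlling what you call ``the $n$-th order term in the resolvent expansion \eqref{P5}'' through the single-step estimate $\|(A^\alpha(z)\rho-A^\alpha(0)\rho)R(\zeta,0)\|\le|z|/r^*$. This conflates two distinct indices: the Neumann order $p$ (the number of factors of $(A^\alpha(z)\rho-A^\alpha(0)\rho)R(\zeta,0)$ in \eqref{P5}) and the Taylor order $n$ (the power of $z$). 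The coefficient $R_n(\zeta,\rho)$ gathers contributions from \emph{every} Neumann order $p=1,\dots,n$ through the sum over compositions $k_1+\cdots+k_p=n$, and the single-step bound controls the whole function $z\mapsto(A^\alpha(z)\rho-A^\alpha(0)\rho)R(\zeta,0)$, not its individual Taylor coefficients $A_k^\alpha\rho\,R(\zeta,0)$. If one instead applies the Cauchy coefficient inequality to the operator-valued map $z\mapsto R(\zeta,z)$, the Neumann bound $\|R(\zeta,z)\|\le d^{-1}(1-|z|/r^*)^{-1}$ gives, after optimizing the radius of the Cauchy circle, only $\|R_n(\zeta,\rho)\|\lesssim d^{-1}(n+1)(r^*)^{-n}$; the extra polynomial factor in $n$ propagates to $|\beta_n^\alpha|$ and spoils the clean geometric tail, yielding a strictly weaker remainder than \eqref{R10}.

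The paper's argument (following Kato, Ch.~II \S3) avoids this by applying Cauchy's coefficient inequality not to the resolvent but to the \emph{scalar} analytic function $z\mapsto\hat\beta^\alpha(z)-\beta_0^\alpha$. The decisive input is property~1 of Theorem~\ref{Rt1}: for every $|z|<r^*$ the contour $\Gamma$ (a circle of radius $d$ about $\beta_0^\alpha$) lies in the resolvent set of $A^\alpha(z)\rho$ and continues to enclose the eigenvalue group, so the weighted mean $\hat\beta^\alpha(z)$ is an average of points interior to $\Gamma$ and therefore $|\hat\beta^\alpha(z)-\beta_0^\alpha|<d$ \emph{uniformly} for $|z|<r^*$. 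Cauchy's inequality for a scalar function bounded by $d$ on circles of radius $r\uparrow r^*$ then gives $|\beta_n^\alpha|\le d\,(r^*)^{-n}$ with no $n$-dependent loss, and the geometric tail sum you already wrote delivers \eqref{R10}; the $\alpha=0$ case is identical with $r^*$ from \eqref{R7}. Your single-step resolvent estimate is exactly the right tool for establishing the radius $r^*$ in Theorems \ref{Rt1}--\ref{Rt2}, but used coefficientwise it is not sharp enough to reproduce the error estimate; you must additionally exploit the \emph{a priori} geometric confinement of $\hat\beta^\alpha(z)$ inside $\Gamma$.
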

We summarize results in the following theorem.
\begin{theorem}
\label{Rt5}
The Bloch eigenvalue problem \eqref{I1} is defined for the coupling constant $k$ extended into the complex plane and the operator $u\mapsto\nabla\cdot(\mathbf{C}^1\chi_{D}+\mathbf{C}^2\chi_{Y\setminus D})\mathcal{E}(u)$ with domain $H_\#^1(\alpha, Y)^3$ is holomorphic for $k\in\mathbb{C}\setminus Z^\alpha$. The associated Bloch spectra is given by the eigenvalues $\xi_j(k,\alpha)=(\beta_j^\alpha(1/k))^{-1}$, for $j\in\mathbb{N}$. For $\alpha\in Y^*$ fixed, the eigenvalues are of finite multiplicity. Moreover for each $j$ and $\alpha\in Y^*$ the eigenvalue group is analytic within a neighborhood of infinity containing the disk $|k|>(r^*)^{-1}$ where $r^*$ is given by \eqref{R3} for $\alpha\neq 0$ and by \eqref{R7} for $\alpha=0$. When $\beta_j^\alpha(0)$ is simple these conditions are sufficient for the separation of spectral branches of the dispersion relation for fixed quasi-momentum within a neighborhood of infinity containing the disk $|k|>(r^*)^{-1}$.
\end{theorem}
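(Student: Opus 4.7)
The plan is to assemble Theorem \ref{Rt5} as a synthesis of the results already established, with very little new work beyond careful translation between the coupling constant $k$ and the small parameter $z=1/k$. First I would invoke Theorem \ref{BT1} to secure the meromorphic extension of the Bloch problem \eqref{I1} to $k\in\mathbb{C}\setminus Z^\alpha$, noting that by Theorem \ref{Ht2.5} the operator $T_k^\alpha$ (and hence the elastic divergence-form operator via the identity \eqref{H36}) depends analytically on $k$ off $Z^\alpha$ because its spectral representation is a linear combination of fixed projections with analytic scalar coefficients. Finite multiplicity of the Bloch eigenvalues at each fixed $\alpha\in Y^*$ is then the standard consequence of compactness of $B^\alpha(k)\rho$ on $L^2_\#(\alpha,Y)^3$: this compactness is exactly what was observed in Section~\ref{Band structure}, coming from the compact embedding $H^1_\#(\alpha,Y)^3\hookrightarrow L^2_\#(\alpha,Y)^3$ composed with the bounded operator $B^\alpha(k)$.

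Second, I would switch to the variable $z=1/k$ and use the definition $A^\alpha(z)=B^\alpha(k)$ to rewrite the Bloch relation as $\xi_j(k,\alpha)=(\beta_j^\alpha(1/k))^{-1}$, where the $\beta_j^\alpha(z)$ are the eigenvalues of $A^\alpha(z)\rho$. The analyticity of the eigenvalue group in a neighborhood of infinity in $k$ is then precisely the analyticity of $\hat{\beta}^\alpha(z)$ in a neighborhood of $z=0$. This is supplied by Theorems \ref{Rt1} and \ref{Rt2}: the weighted mean series \eqref{P-10} converges uniformly on the disk $|z|<r^*$, with $r^*$ given by \eqref{R3} for $\alpha\neq0$ and \eqref{R7} for $\alpha=0$. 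Translating the disk $|z|<r^*$ back to the $k$-plane through $k=1/z$ yields the punctured neighborhood of infinity $|k|>(r^*)^{-1}$, which is the statement of the theorem.

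Third, for the separation of spectral branches I would use the hypothesis that $\beta_j^\alpha(0)$ is simple. Choose the contour $\Gamma$ in Theorem \ref{Rt1} (respectively Theorem \ref{Rt2}) so that it encloses $\beta_j^\alpha(0)$ and no other point of $\sigma(A^\alpha(0)\rho)$. By the first three conclusions of these theorems, $\Gamma$ lies in the resolvent set of $A^\alpha(z)\rho$ for $|z|<r^*$, the spectral projection $P(z)$ is holomorphic there, and $M'(z)$ is isomorphic to $M'(0)$. Simplicity at $z=0$ gives $\dim M'(0)=1$, so $\dim M'(z)=1$ throughout the disk; hence $\Sigma'(z)$ consists of exactly one eigenvalue branch $\beta_j^\alpha(z)$ isolated from the rest of $\sigma(A^\alpha(z)\rho)$. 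The weighted mean then equals the branch itself, the series \eqref{P-10} represents $\beta_j^\alpha(z)$ itself, and inversion gives an isolated analytic branch $\xi_j(k,\alpha)$ separated from all other branches on $|k|>(r^*)^{-1}$.

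The only step requiring real care is the third one: I must check that the isomorphism $M'(z)\cong M'(0)$ from Theorems \ref{Rt1} and \ref{Rt2} does in fact promote the weighted mean $\hat{\beta}^\alpha(z)$ to an honest eigenvalue branch when $m=\dim M'(0)=1$. This is immediate once we note that for $m=1$ the trace formula \eqref{P3} collapses, since $\operatorname{tr}(A^\alpha(z)\rho\,P(z))=\beta_j^\alpha(z)\dim M'(z)=\beta_j^\alpha(z)$. All remaining assertions are then mechanical consequences of the holomorphy of a scalar function and the non-vanishing of $\beta_j^\alpha(z)$ near $z=0$ (which holds because $\beta_j^\alpha(0)\neq 0$ is guaranteed by Theorem \ref{ST1} or \ref{SSt6.1}, since $0$ is not in the spectrum of $(-\mathcal{L}_D)^{-1}\rho^1$ nor among the $\nu_j^{-1}$), so that $\xi_j(k,\alpha)=\beta_j^\alpha(1/k)^{-1}$ is itself analytic in a neighborhood of $k=\infty$.
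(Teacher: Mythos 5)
Your proposal is correct and matches the paper's intent: Theorem~\ref{Rt5} is stated in the paper as a ``summarize results'' theorem with no separate proof supplied, and your synthesis of Theorems~\ref{BT1}, \ref{Ht2.5}, \ref{Rt1}, \ref{Rt2} and the $z=1/k$ change of variables is exactly the argument the paper intends the reader to assemble. The one small caution worth noting is in your first paragraph: the operator $T_k^\alpha$, being affine in $k$ by the spectral formula of Theorem~\ref{Ht2.5}, is actually entire in $k$; what fails on $Z^\alpha$ is invertibility, so the holomorphy claim in the theorem is really about the resolvent $(T_k^\alpha)^{-1}$ and the resulting eigenvalue/eigenprojection data, which is consistent with the way you then use Theorem~\ref{BT1} and the series of Theorems~\ref{Rt1}--\ref{Rt2} in the remainder of your argument.
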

The proofs of Theorems \ref{Rt1}, \ref{Rt2} and \ref{Rt4} are given in section \ref{sec-derivation}. The proofs of Theorem \ref{Rt3} is given in section \ref{sec-layerpotential}.

\section{Bounds on Quasi-static Resonance Spectra for Periodic Scatters of General Shape}
\label{Separation of spectra}
In this section we identify an explicit condition on the inclusion geometry that guarantees a lower bound $\tau^-$ on the quasi-periodic spectra that holds uniformly for $\alpha\in Y^*$. 
$$-\frac{1}{2}<\tau^-\leq \tau^-(\alpha)=\min_i\{\tau_i\}\leq\frac{1}{2},$$
This provides a lower bound on the structural spectra $\cup_{\alpha\in Y^\ast}\{\tau_i(\alpha)\}_{i=1}^{\infty}$ that is strictly greater than $-1/2$.

To begin, note if $(\tau,w)$ is an eigenpair of $T|_{W_3}$ and $v\in H_\#^1(\alpha, Y)^3$ then,
\begin{equation}
\label{G1}
    \frac{1}{2}\int_{Y\setminus D}\mathbf{C}^1\mathcal{E}(w):\overline{\mathcal{E}(v)}\; dx -\frac{1}{2}\int_{D}\mathbf{C}^1\mathcal{E}(w):\overline{\mathcal{E}(v)}\;dx=\tau\int_{Y}\mathbf{C}^1\mathcal{E}(w):\overline{\mathcal{E}(v)}\; dx.
\end{equation}
Adding $ \frac{1}{2}\int_{Y}\mathbf{C}^1\mathcal{E}(w):\overline{\mathcal{E}(v)}\; dx$ to both sides yields
\begin{equation}
\label{G2}
    \int_{Y\setminus D}\mathbf{C}^1\mathcal{E}(w):\overline{\mathcal{E}(v)}\; dx=(\tau+\frac{1}{2})\int_{Y}\mathbf{C}^1\mathcal{E}(w):\overline{\mathcal{E}(v)}\; dx.
\end{equation}
The lower bound is obtained by  showing  that there exists a $p>0$ such that $\tau_i+\frac{1}{2}\geq p$ independent of $i\in\mathbb{N}$ and $\alpha\in Y^*$. 
\begin{theorem}
\label{Gt1}
Let $\tau^-(\alpha)$ be the lowest eigenvalue of $T$ in $W_3^\alpha\subset H_\#^1(\alpha, Y)^3$. Suppose there is a $\theta> 0$ independent of $\alpha\in Y^\ast$ such that for all $v\in W^\alpha_3$ we have 
\begin{equation}
\label{G3}
     \int_{Y\setminus D}\mathbf{C}^1\mathcal{E}(v):\overline{\mathcal{E}(v)}\; dx\geq \theta  \int_{D}\mathbf{C}^1\mathcal{E}(v):\overline{\mathcal{E}(v)}\; dx.
\end{equation}
Let $p=\min\{\frac{1}{2},\frac{\theta}{2}\}.$ Then $\tau^-(\alpha)+\frac{1}{2}\geq p$ for all $\alpha\in Y^*.$
\end{theorem}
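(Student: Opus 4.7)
The plan is to test the identity \eqref{G2} against $v=w$ and reduce the statement to a one-variable inequality. Writing $A = \int_{Y\setminus D}\mathbf{C}^1\mathcal{E}(w):\overline{\mathcal{E}(w)}\,dx$ and $B = \int_D\mathbf{C}^1\mathcal{E}(w):\overline{\mathcal{E}(w)}\,dx$, the choice $v=w$ in \eqref{G2} gives
\begin{equation*}
A \;=\; \left(\tau+\tfrac{1}{2}\right)(A+B).
\end{equation*}
Because $w$ is an eigenvector of $T$ restricted to $W^\alpha_3$, it is nonzero as an element of $H^1_\#(\alpha,Y)^3$, so $A+B = \langle w,w\rangle > 0$. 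Hence the identity rearranges to
\begin{equation*}
\tau+\tfrac{1}{2} \;=\; \frac{A}{A+B}.
\end{equation*}

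Next I would invoke the hypothesis \eqref{G3}, which with $v=w\in W^\alpha_3$ reads $A \geq \theta B$, equivalently $B \leq A/\theta$. Substituting this into the denominator yields
\begin{equation*}
\tau+\tfrac{1}{2} \;\geq\; \frac{A}{A + A/\theta} \;=\; \frac{\theta}{\theta+1}.
\end{equation*}
Since this estimate is uniform in the eigenpair $(\tau,w)$ and uniform in $\alpha$ (the constant $\theta$ is $\alpha$-independent by assumption), it applies in particular to the lowest eigenvalue $\tau^-(\alpha)$.

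The last step is an elementary comparison of $\theta/(\theta+1)$ with $p=\min\{1/2,\theta/2\}$. Splitting cases: when $\theta\geq 1$ one has $\theta+1\leq 2\theta$, so $\theta/(\theta+1)\geq 1/2\geq p$; when $0<\theta\leq 1$ one has $\theta+1\leq 2$, so $\theta/(\theta+1)\geq \theta/2\geq p$. Either way $\tau^-(\alpha)+\tfrac{1}{2}\geq p$, which is the claim.

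The proof itself is essentially a one-line Rayleigh-quotient argument once \eqref{G2} is in hand; the real content of the theorem is the hypothesis \eqref{G3}, and the genuine obstacle is not the estimate above but rather the later task of producing geometries for which a uniform-in-$\alpha$ coercivity constant $\theta>0$ actually exists. That is deferred to the buffered-inclusion construction of Section \ref{Separation of spectra} and Corollary \ref{Gc2}, where an explicit $\theta$ is extracted from the separation between $\partial D$ and $\partial Y$. For the present theorem, no such construction is required: the argument is purely algebraic once \eqref{G2} and \eqref{G3} are combined.
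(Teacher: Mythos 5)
Your argument is correct and rests on the same identity and hypothesis as the paper's proof, namely equation \eqref{G2} tested against $v=w$ combined with the coercivity \eqref{G3}. The difference is presentational but worth noting: the paper argues by contradiction (assume $\tau^-(\alpha)+\tfrac12<p$, derive $\int_{Y\setminus D}<\tfrac12$ and $\int_D<\tfrac12$, then contradict the normalization $\langle u^-,u^-\rangle=1$), whereas you compute the Rayleigh quotient directly and get
\[
\tau+\tfrac12=\frac{A}{A+B}\geq\frac{\theta}{\theta+1},
\]
which strictly dominates $\min\{\tfrac12,\tfrac{\theta}{2}\}$ except at $\theta=1$, where they agree. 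So your route is both more direct and yields a sharper lower bound on the structural spectrum; the paper's stated constant $p$ is then just a weakening of $\theta/(\theta+1)$. One small point worth making explicit in your write-up: you use $A>0$ when dividing by $A+A/\theta$; this follows because $A+B=\langle w,w\rangle>0$ and $A\geq\theta B\geq 0$ force $A>0$ (if $A=0$ then $B=0$). Your closing remark is also accurate — the analytic content lives in establishing \eqref{G3} for a class of geometries, which the paper does in Section \ref{Separation of spectra} via extension operators and Korn/Poincar\'e inequalities on the buffer shell.
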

\begin{proof}
We prove by contradiction, so in addition to \eqref{G3}, we suppose that $\tau^-(\alpha)+\frac{1}{2}< p$ for some $\alpha\in Y^*$. Then $\tau^-(\alpha)+\frac{1}{2}<\frac{1}{2}$ and $\tau^-(\alpha)+\frac{1}{2}<\frac{\theta}{2}.$ Let $u^-$ be the eigenvector of $T$ with eigenvalue $\tau^-(\alpha),$ normalized so that $\Vert u^-\Vert=1$. Then we obtain
\begin{equation}
\label{G4}
     \int_{Y\setminus D}\mathbf{C}^1\mathcal{E}(u^-):\overline{\mathcal{E}(u^-)}\; dx<\frac{1}{2}
\end{equation}
and
\begin{equation}
\label{G5}
    \frac{\theta}{2}>  \int_{Y\setminus D}\mathbf{C}^1\mathcal{E}(u^-):\overline{\mathcal{E}(u^-)}\; dx\geq\theta  \int_{D}\mathbf{C}^1\mathcal{E}(u^-):\overline{\mathcal{E}(u^-)}\; dx.
\end{equation}
This gives
\begin{equation}
\label{G6}
    \int_{D}\mathbf{C}^1\mathcal{E}(u^-):\overline{\mathcal{E}(u^-)}\; dx<\frac{1}{2}.
\end{equation}
Inequalities \eqref{G4} and \eqref{G6} yield
\[
\int_{Y}\mathbf{C}^1\mathcal{E}(u^-):\overline{\mathcal{E}(u^-)}\; dx=\|u^-\|^2<1.
\]
This is a contradiction since $\|u^-\|=1$.
\end{proof}
Clearly the parameter $\theta$ is a geometric descriptor for $D$. We define the class of inclusion configurations for which the structural spectra is bounded strictly above $-1/2$.
\begin{definition}\label{Ptheta}
The class of periodic distributions of inclusions for which Theorem \ref{Gt1} holds for a fixed positive value of $\theta$ is denoted by $P_\theta$. 
The structural spectra $\cup_{\alpha\in Y^\ast}\{\tau_i(\alpha)\}_{i=1}^{\infty}$ for this class is bounded above $-1/2$ so $z^\ast<0$ is uniformly bounded away from zero for $\alpha\in Y^*$.
\end{definition}
With this definition we have the corollary given by:
\begin{corollary}
\label{Gc2}
For every inclusion domain $D$ belonging to $P_\theta$ Theorems \ref{Rt1} through \ref{Rt5} hold with $z^*$ replaced with $z^+_\theta$ given by
\begin{equation}
\label{G7}
z^+_\theta=\frac{\tau^-+1/2}{\tau^--1/2}<0,
\end{equation}
where $\tau^-=\min\{\frac{1}{2},\frac{\theta}{2}\}-\frac{1}{2}$.
\end{corollary}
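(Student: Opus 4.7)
The plan is to derive the corollary as a direct consequence of Theorem \ref{Gt1} combined with the monotonicity of the map $\tau \mapsto (\tau+1/2)/(\tau-1/2)$ on $(-1/2,1/2)$.

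First I would invoke Theorem \ref{Gt1}: since $D \in P_\theta$, the coercivity hypothesis \eqref{G3} holds with a fixed $\theta > 0$ independent of $\alpha \in Y^*$, so
\[
\tau^-(\alpha) + \tfrac{1}{2} \;\geq\; p \;:=\; \min\!\left\{\tfrac{1}{2}, \tfrac{\theta}{2}\right\}
\]
for every $\alpha \in Y^*$. Setting $\tau^- := p - 1/2 = \min\{1/2,\theta/2\} - 1/2 \in (-1/2, 0]$ gives the uniform lower bound $\tau^- \leq \tau^-(\alpha)$ on the structural spectra, which is exactly the quantity entering the statement of the corollary.

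Next I would verify the monotonicity argument. Let $f(\tau) = (\tau+1/2)/(\tau-1/2)$. Differentiating gives $f'(\tau) = -1/(\tau-1/2)^2 < 0$ on $(-1/2, 1/2)$, so $f$ is strictly decreasing there. Together with $\tau^-(\alpha) \geq \tau^-$ this yields
\[
z^*(\alpha) \;=\; f(\tau^-(\alpha)) \;\leq\; f(\tau^-) \;=\; z^+_\theta \;<\; 0,
\]
so $|z^*(\alpha)| \geq |z^+_\theta|$ uniformly in $\alpha \in Y^*$. In other words, the distance from the origin to the nearest pole of $A^\alpha(z)\rho$ is bounded below by the single geometric quantity $|z^+_\theta|$, uniformly in quasi-momentum.

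Finally I would transfer this uniform control into Theorems \ref{Rt1}--\ref{Rt5}. The formulas \eqref{R3} and \eqref{R7} depend on $\alpha$ only through $|z^*|$ and $(1/2 - \tau^-(\alpha))^{-1}$; replacing $\tau^-(\alpha)$ by the smaller $\tau^-$ and $z^*$ by $z^+_\theta$ decreases both factors, producing a smaller but strictly positive $r^*$ that is now independent of $\alpha$. All conclusions of Theorems \ref{Rt1}--\ref{Rt5} (separation of spectra, holomorphy and contour representation of $P(z)$, isomorphism of $M'(z)$ with $M'(0)$, power series convergence, and the error estimates) then hold verbatim with this uniform $r^*$, so in particular the disk $|k| > (r^*)^{-1}$ on which the Bloch eigenvalue expansion converges can be chosen independent of $\alpha \in Y^*$. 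The only substantive content is the application of Theorem \ref{Gt1} to obtain the uniform lower bound on $\tau^-(\alpha)$; the rest is algebraic bookkeeping, so I do not anticipate any real obstacle beyond verifying that the substitutions in \eqref{R3} and \eqref{R7} go through monotonically.
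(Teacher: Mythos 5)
Your overall strategy is the right one and matches the paper's intent: the corollary really is just Theorem~\ref{Gt1} (the uniform lower bound $\tau^-(\alpha)\geq\tau^-:=\min\{1/2,\theta/2\}-1/2>-1/2$) combined with the strict monotonicity of $f(\tau)=(\tau+1/2)/(\tau-1/2)$, which gives $z^*(\alpha)=f(\tau^-(\alpha))\leq f(\tau^-)=z^+_\theta<0$, and then the observation that the formulas \eqref{R3} and \eqref{R7} still give valid (if more conservative) radii after the substitution. The paper offers no separate proof, so you have correctly reconstructed the argument.

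However, your bookkeeping step is logically off, even though the conclusion it reaches is correct. You write that replacing $\tau^-(\alpha)$ by $\tau^-$ and $z^*$ by $z^+_\theta$ ``decreases both factors, producing a smaller but strictly positive $r^*$.'' That inference does not follow, because the two quantities sit on opposite sides of the fraction in \eqref{R3}: lowering $|z^*|$ in the numerator decreases $r^*$, but lowering $(1/2-\tau^-(\alpha))^{-1}$ in the denominator term $\|\rho\|_{L^\infty}/(1/2-\tau^-(\alpha))$ shrinks the denominator and therefore \emph{increases} $r^*$. The two replacements push $r^*$ in opposite directions, so ``both decrease'' does not by itself give the sign of the net change. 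To see that the net change is a decrease, substitute $|z^*(\tau)|=(\tau+1/2)/(1/2-\tau)$ and clear $(1/2-\tau)$ from numerator and denominator, so that with $A:=\mu_1|\alpha|^2 d$ one obtains
\begin{equation*}
r^*(\tau)=\frac{A\left(\tau+\tfrac{1}{2}\right)}{\|\rho\|_{L^\infty(Y)^3}+A\left(\tfrac{1}{2}-\tau\right)},\qquad \frac{d}{d\tau}r^*(\tau)=\frac{A\|\rho\|_{L^\infty(Y)^3}+A^2}{\left(\|\rho\|_{L^\infty(Y)^3}+A(\tfrac{1}{2}-\tau)\right)^2}>0,
\end{equation*}
so $r^*$ is increasing in $\tau$ and hence $r^*(\tau^-)\leq r^*(\tau^-(\alpha))$. (Alternatively, the minimal fix is to replace only $|z^*|$ by $|z^+_\theta|$ and leave $\tau^-(\alpha)$ alone; this decreases only the numerator and is enough to give a valid, smaller radius.) Finally, a small overclaim: the substituted $r^*$ is \emph{not} independent of $\alpha$ — the factor $\mu_1|\alpha|^2 d$ survives, and indeed $r^*\to 0$ as $|\alpha|\to 0$ in the $\alpha\neq 0$ formula \eqref{R3}. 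The corollary asserts an $\alpha$-independent replacement for $z^*$, not an $\alpha$-independent radius of convergence.
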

Now we introduce a wide class of inclusion shapes belonging to $P_\theta$ for a given $\theta>0$. Consider an inclusion domain $D$ with smooth $C^\infty$ boundary. Suppose we can surround the inclusion with a security layer $R$ of given thickness such that their union ${D}\cup R=D'$ is contained inside the interior of $Y$. We show next that there is a $\theta>0$ that will depend on $R$ and $D$ but be independent of $\alpha\in Y^\ast$. Given $f'\in H^1(R)^3$ there is a bounded linear extension operator $E:H^1(R)^3\mapsto H^1(D')^3$ such that $f=E(f')$ satisfies $f(x)=f'(x)$ for $x\in R$ see \cite{Necas}. Hence there is a positive constant $C_e$ depending only on $D$ and $R$ such that
\begin{equation}
\label{G1154}
    \begin{aligned}
    \|E(f')\|_{H^1(D)^3}^2
    &\leq C_e\|f'\|^{2}_{H^1(R)^3}.
\end{aligned}
\end{equation}
The space of rigid body motions on $R$ is written $\mathcal{R}=\{u(x)=Q x+c\; ;\,\,x\in R,\,\,Q\in SO^3, c\in\mathbb{R}^3\}$ and its projection with respect to the $L^2(R)^3$ norm is written $\mathbb{P}_{\mathcal{R}}$. Now choose $u\in W_3^\alpha$
and consider $u-\mathbb{P}_\mathcal{R}u$ restricted to $R$, and we have the inequalities:
\begin{equation}
\label{G1155}
    \begin{aligned}
    \frac{1}{\beta}\int_D\,\mathbf{C}^1\mathcal{E} E(u-\mathbb{P}_\mathcal{R} u):\overline{\mathcal{E}E(u-\mathbb{P}_\mathcal{R} u)}\,dy&\leq \|E(u-\mathbb{P}_{\mathcal{R}} u)\|_{H^1(D)^3}^2
    \leq C_e\|u-\mathbb{P}_\mathcal{R} u\|^{2}_{H^1(R)^3}.
\end{aligned}
\end{equation}
To obtain the first inequality we use $\gamma<\mathbf{C}^1< \beta$ in the sense of quadratic forms, see \eqref{ellip}.  On applying Korn and Poincare inequalities to the right hand side (see, e.g., \cite{Duvaut} pg. 117) delivers positive  constants $K$ and $C$ independent of $\alpha\in Y^\ast$ such that
\begin{equation}
\label{G156}
    \begin{aligned}
    \|u-\mathbb{P}_\mathcal{R}u\|_{H^1(R)^3}^2
    &\leq K\left(\int_{R } |u-\mathbb{P}_\mathcal{R}u|^2\,dw+\int_{R}\,\mathcal{E}(u):\overline{\mathcal{E}(u)}\,dx\right),
\end{aligned}
\end{equation}
and 
\begin{equation}
\label{G157}
    \begin{aligned}
    \|u-\mathbb{P}_\mathcal{R}u\|_{L^2(R)^3}^2
    &\leq C\int_{R}\,\mathcal{E}(u):\overline{\mathcal{E}(u)}\,dx,
\end{aligned}
\end{equation}
Noting that $u\in W_3^\alpha$ satisfies $\mathcal{L} u=0$ in $D$ we see that it is a minimizer of the elastic energy on $D$ for boundary data $u-\mathbb{P}_\mathcal{R}u$ on $\partial D$  hence
\begin{equation}
\label{G158}
    \begin{aligned}
    \frac{1}{\beta}\int_D\,\mathbf{C}^1\mathcal{E} (u):\overline{\mathcal{E}(u)}\,dy&
    \leq C_e\|u-\mathbb{P}_\mathcal{R} u\|^{2}_{H^1(R)^3}.
\end{aligned}
\end{equation}
Application of \eqref{G156}, \eqref{G157}, \eqref{G158}, and \eqref{ellip} gives
\begin{equation}
\label{G159}
    \begin{aligned}
    \theta\int_D\,\mathbf{C}^1\mathcal{E} (u):\overline{\mathcal{E}(u)}\,dy&
    \leq \int_R\,\mathbf{C}^1\mathcal{E} (u):\overline{\mathcal{E}(u)}\,dy,
\end{aligned}
\end{equation}
with $\theta$ independent of $\alpha\in Y^\ast$ and $\theta=\frac{\gamma}{\beta KC_e(1+C)}$ and it follows that
\begin{equation}
\label{G160}
    \begin{aligned}
    \theta\int_D\,\mathbf{C}^1\mathcal{E} (u):\overline{\mathcal{E}(u)}\,dy&
    \leq \int_{Y\setminus D}\,\mathbf{C}^1\mathcal{E}(u):\overline{\mathcal{E}(u)}\,dy,
\end{aligned}
\end{equation}

\section{Radius of Convergence and rates of convergence for Dispersions of Spherical inclusions}
\label{Example}

We now provide an example where the radius of convergence and separation of spectra given by Theorems \ref{Rt1} and \ref{Rt2} is determined explicitly by the radii of each inclusion, the minimum distance seperating each inclusion and the Dirichlet spectra of the inclusions.
We display this for crystals in $\mathbb{R}^3$ with period cell containing a of spherical inclusion $D$  of radius $a$ surrounded by a buffer shell $R$ of inner radius $a$ and outer radius $b$.
The spherical inclusion can be located any where inside the unit cell provided that the buffer shell is also interior to the unit cell. This is a  specific example of the $P_\theta$ type geometry, see Figure \ref{plane2} introduced in the work of \cite{Bruno}.
 
\begin{figure}[h]
\centering
\begin{tikzpicture}[xscale=0.6,yscale=0.6]
\draw [thick] (-2,-2) rectangle (3,3);
\draw [fill=orange,thick] (0.95,0.95) circle [radius=1.0];
\draw (0.95,0.95) circle [radius=2.0];

\end{tikzpicture} 
\caption{\bf The shaded region is the inclusion
of radius $a$ surrounded by a shell of thickness $a$ and outer radius $2a$.}
 \label{plane2}
\end{figure}
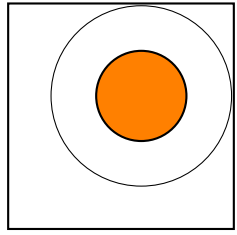

Let $q$ denote the ratio between inner and outer radius and to illustrate the ideas we choose $q=0.5$. Table 2 of
\cite{Bruno} delivers $\theta=1/2$ consequently $\tau^-=-0.25$ and together with \eqref{G7}  gives $|z^*|\leq\frac{\tau^-+1/2}{1/2-\tau^-}=1/3$. 
From Section \ref{Spectrum-quasiperiodic} we have that the limit spectrum $\sigma(A^\alpha(0)\rho)$ is given by the reciprocal of the Dirichlet eigenvalues  of
\begin{equation}\label{dirichletA}
\mathcal{L}_Du=\rho^1\hat{\eta} u \hbox{ for $\hat{\eta}>0$ and $u\in H^1_0(D)^3$}. 
\end{equation}
We set $\eta_j=\hat{\eta}^{-1}_j$, and write this as $\sigma(A^\alpha(0)\rho)=\{\eta_j\}_{j=1}^\infty$.
Let $\eta_j\in\sigma(A^\alpha(0)\rho)$ and let  $\tilde{\eta}$ be the minimizer of $\min_{j,j'\in\mathbb{N}}|\eta_j-\eta_{j'}|$.
Then $d=\frac{1}{2}|\eta_j-\tilde{\eta}|$.
Substitution of $d$ and $z^*$ into \eqref{R3} shows that the radius of convergence $r^*$ is defined explicitly in terms of the physical geometry of the inclusions and the Dirichlet spectra of the inclusions and is given by 
\begin{equation}\label{rad1}
    r^*=\frac{\mu_1|\alpha|^2|\eta_j-\tilde{\eta}|}{8\rho^{'}+3\mu_1|\alpha|^2|\eta_j-\tilde{\eta}|}\; \text{for}\;\alpha\neq0.
\end{equation}
From Section \ref{Spectrum-periodic}, theorem \ref{SSt6.1} the spectra is discrete and $\sigma(A^0(0)\rho)=\{\delta'^{-1}_j\}_{j\in\mathbb{N}}\cup\{\nu^{-1}_j\}_{j\in\mathbb{N}}$. We denote generic elements of this discrete spectra as $\gamma_j$ and for a fixed element $\gamma_j$ we let $\tilde{\gamma}$ be the minimizer of $\min_{j,j'\in\mathbb{N}}|\gamma_j-\gamma_{j'}|$.
Then $d=\frac{1}{2}|\gamma_j-\tilde{\gamma}|$.
Substitution of $d$ and $z^*$ into \eqref{R7} shows that the  radius of convergence $r^*$ is defined explicitly in terms of the physical geometry of the inclusions and the Dirichlet spectra of the inclusions and is given by 
\begin{equation}\label{rad2}
    r^*=\frac{\pi^2\mu_1|\gamma_j-\tilde{\gamma}|}{2\rho^{'}+3\pi^2\mu_1|\gamma_j-\tilde{\gamma}|}\; \text{for}\;\alpha=0.
\end{equation}

The convergence rates are given by Theorem \ref{Rt4} using.the values of $r^*$ given by \eqref{rad1} and \eqref{rad2}
\section{Emergence of Bandgaps in the high contrast limit}
\label{sec-bandgap}

In this section we identify conditions which are sufficient for the  emergence of band gaps for sufficiently large contrast. Here the contrast $k$ is real and taken to be in the interval $1\leq k<\infty$. We order the Dirichlet spectrum of the inclusion $D$ by minmax and $0<\delta_1\leq\delta_2\leq\cdots\delta_j\leq\delta_{j+1}\cdots$, $\delta_j\rightarrow\infty$ as $j\rightarrow \infty$. Let $\{\delta'_j\}_{j=1}^{\infty}$ be the Dirichlet eigenvalues associated with zero average eigenfunctions and let $\{\delta^*_j\}_{j=1}^{\infty}$ be the Dirichlet eigenvalues for which there are non-zero average eigenfunctions.
Recall the limit spectra defined in Theorem \ref{SSt6.1}
\begin{equation}
    \label{neumann}
    \sigma(A^0(0)\rho)=\{\delta'^{-1}_j\}_{j=1}^\infty\cup\{\nu^{-1}_j\}_{j=1}^\infty,
\end{equation}
and  $\{\nu_j\}_{j\in\mathbb{N}}$ are the positive roots of \eqref{SS3}. The eigenspaces of elements of $\sigma(A^0(0)\rho)$ are  orthogonal to the $3$ dimensional space of rigid translations. Let  $\omega_j=\lim_{k\rightarrow\infty}\xi_{j+3}(k,0)$ and from Theorem \ref{SSt6.1} and \eqref{alt} one has the alternative  $\omega_j=\delta'_j$ or $\omega_j=\nu_j$. In what follows we assume that the inclusions are in the class $P_\theta$.
We have the interlacing theorem.
\begin{theorem}\label{Interlacing}
\begin{equation}\label{1stinterlace}
\delta_j\leq{\omega}_j\leq\delta_{j+3}.
\end{equation}
\end{theorem}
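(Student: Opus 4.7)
I plan to deduce both inequalities from the variational characterisations of $\delta_j$ and $\omega_j$ collected in Lemma \ref{minmax}. The first step is to identify the limits $\omega_j$. By Theorem \ref{SSt6.1} and the spectral separation estimate of Theorem \ref{Rt2}, the Bloch eigenvalues $\xi_j(k,0)$ with $j\ge 4$ converge, as $k\to\infty$, to the reciprocals of the elements of $\sigma(A^0(0)\rho)=\{\delta'^{-1}_j\}\cup\{\nu^{-1}_j\}$, enumerated in increasing order. The three zero Bloch eigenvalues $\xi_1=\xi_2=\xi_3=0$ correspond to the three-dimensional space of rigid translations, which lies in the kernel of $-\nabla\cdot(\mathbf{C}\mathcal{E}(\cdot))$ acting on $L^2_\#(0,Y)^3$ but is excluded from $H_\#^1(0,Y)^3$ by the normalisation $\int_Y\rho u=0$. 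Thus $\omega_j=\eta_j^{-1}$, and Lemma \ref{minmax} furnishes min-max representations of $\omega_j$ and $\delta_j$ on the common test space $H_0^1(D)^3$, sharing the numerator $\int_Y\mathbf{C}^1\mathcal{E}(u):\mathcal{E}(\overline u)\,dx$ and differing only in that the $\omega_j$-denominator $(\rho u,u)-\langle\rho\rangle^{-1}|\int_Y\rho u|^2$ is smaller than the $\delta_j$-denominator $(\rho u,u)$ by the non-negative quantity $\langle\rho\rangle^{-1}|\int_Y\rho u|^2$. Denote the respective Rayleigh quotients by $R_\omega(u)$ and $R_\delta(u)$.

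\textbf{Lower bound $\delta_j\le\omega_j$.} Since the $\omega_j$-denominator is pointwise no larger than the $\delta_j$-denominator, we have $R_\omega(u)\ge R_\delta(u)$ on $H_0^1(D)^3\setminus\{0\}$. Applying the min-max principle over the same class of $j$-dimensional subspaces of $H_0^1(D)^3$ then gives $\omega_j\ge\delta_j$ at once.

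\textbf{Upper bound $\omega_j\le\delta_{j+3}$.} This is the step where the shift of three enters. For any $(j+3)$-dimensional trial space $S^{j+3}\subset H_0^1(D)^3$, the linear map $u\mapsto\int_Y\rho u\in\mathbb{R}^3$ has rank at most three, so its kernel $\tilde S^j:=S^{j+3}\cap\ker\bigl(u\mapsto\int_Y\rho u\bigr)$ has dimension at least $j$. On $\tilde S^j$ the correction term vanishes and the two Rayleigh quotients agree, so
\[\omega_j\;\le\;\max_{0\ne u\in\tilde S^j}R_\omega(u)\;=\;\max_{0\ne u\in\tilde S^j}R_\delta(u)\;\le\;\max_{0\ne u\in S^{j+3}}R_\delta(u).\]
Minimising the right hand side over all $(j+3)$-dimensional subspaces $S^{j+3}\subset H_0^1(D)^3$ yields $\omega_j\le\delta_{j+3}$.

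\textbf{Main obstacle.} The only genuinely delicate point is the identification $\omega_j=\eta_j^{-1}$, i.e., matching the ordered sequence $\lim_{k\to\infty}\xi_{j+3}(k,0)$ with the increasing enumeration of $\sigma(A^0(0)\rho)^{-1}$ with exactly the correct three-fold index offset. The three zero Bloch eigenvalues are invisible to $A^0(0)\rho$, since constant vector fields are excluded from $H_\#^1(0,Y)^3$, and so they must be accounted for separately before the spectral perturbation theory of Section \ref{Representation} is invoked. Once this bookkeeping is settled the remainder of the proof is a routine Rayleigh-quotient comparison on $H_0^1(D)^3$, exactly parallel to Courant's interlacing argument for the effect of a finite-dimensional constraint on Dirichlet eigenvalues.
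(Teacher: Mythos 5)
Your proposal is correct and uses precisely the route the paper indicates: the paper does not write out a proof of Theorem \ref{Interlacing}, remarking only that the variational arguments of \cite{AmmariKangLee} together with Lemma \ref{minmax} ``can be readily used'' in three dimensions, and your Courant-type comparison of the two Rayleigh quotients on $H^1_0(D)^3$ is exactly that argument, with the rank-$3$ constraint $u\mapsto\int_Y\rho u$ correctly accounting for the index shift of three. The only mild caveat, which you handle correctly by fiat, is that the paper's \eqref{deltaminmax} prints $H^1_0(Y)^3$ where the Dirichlet spectrum $\{\delta_j\}$ on $D$ requires trial subspaces in $H^1_0(D)^3$; your reading is the one consistent with the definition of $\delta_j$ given just above the lemma and is necessary for the upper bound $\omega_j\le\delta_{j+3}$ to go through.
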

One can characterize pass bands in the high contrast limit. Set
\begin{equation}\label{defpass}
a_j(k)=min_{\alpha\in Y^*}\xi_j(k,\alpha),\qquad b_j(k)=max_{\alpha\in Y^*}\xi_{j}(k,\alpha),
\end{equation}

The pass band for $k<\infty$ is given by $[a_j(k),b_j(k)]$ and
\begin{theorem}\label{Interval}
\begin{equation}\label{Band}
\lim_{k\rightarrow\infty}[a_j(k),b_j(k)]=[\omega_j,\delta_{j+3}],
\end{equation}
here $\lim_{k\rightarrow\infty}[a_j(k),b_j(k)]$ can reduce to the single point $\omega_j=\delta_{j+3}$,
and the band structure in the high contrast limit $k\rightarrow\infty$ is given by
\begin{equation}\label{Bands}
[0,\delta_1]\cup[0,\delta_2]\cup[0,\delta_3]\bigcup_{j\geq 1}[\omega_j,\delta_{j+3}].
\end{equation}
\end{theorem}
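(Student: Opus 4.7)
The plan is to combine three ingredients: (i) Lipschitz continuity of $\alpha\mapsto\xi_j(k,\alpha)$ on $Y^\ast$ at fixed $k$, with a Lipschitz constant that is uniform in $k$ for $k$ large; (ii) the pointwise large-contrast limits identified in Theorems \ref{ST1} and \ref{SSt6.1}; and (iii) the interlacing $\delta_j\le\omega_j\le\delta_{j+3}$ of Theorem \ref{Interlacing}. Lipschitz continuity gives that $\{\xi_j(k,\alpha):\alpha\in Y^\ast\}=[a_j(k),b_j(k)]$ is a genuine closed interval, and uniformity in $k$ is what will allow me to exchange $\lim_{k\to\infty}$ with $\min_\alpha$ and $\max_\alpha$.

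First I would verify the Lipschitz bound via the standard Floquet substitution $h(x)=e^{i\alpha\cdot x}u(x)$ with $u$ periodic, which recasts \eqref{I1} as a selfadjoint eigenproblem on the $\alpha$-independent space of periodic vector fields; in this formulation $\alpha$ appears only as bounded zeroth- and first-order coefficients, and standard selfadjoint perturbation theory delivers $|\xi_j(k,\alpha)-\xi_j(k,\alpha')|\le C|\alpha-\alpha'|$. For $k$-uniformity of $C$ I rely on the a priori min-max bound $\xi_j(k,\alpha)\le\delta_j$, obtained by testing with Dirichlet eigenfunctions of $D$ extended by zero outside (these are valid trial functions for every $\alpha\in Y^\ast$ and every $k\ge 1$, and the energy integral only involves $\mathbf{C}^1$ on $D$), together with the spectral representation of Section \ref{Hilbert setting} that isolates the $k$-dependence through $T^\alpha_k$.

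Next I invoke the pointwise limits. By Theorem \ref{ST1} and the series representation of Theorem \ref{Rt5}, for each fixed $\alpha\ne 0$ one has $\xi_j(k,\alpha)\to\delta_j$ as $k\to\infty$. At $\alpha=0$, Theorem \ref{SSt6.1} yields that the three lowest eigenvalues collapse to $0$ (the acoustic rigid-translation modes), while $\xi_{j+3}(k,0)\to\omega_j$ for $j\ge 1$. I would then pick a sequence $k_n\to\infty$ with minimizers and maximizers $\alpha^\pm_{k_n}\in Y^\ast$ of $\xi_j(k_n,\cdot)$, extract convergent subsequences $\alpha^\pm_{k_n}\to\alpha^\pm_\infty$, and pass to the limit using the $k$-uniform Lipschitz bound. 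The subsequential limit of $a_j(k_n)$, respectively $b_j(k_n)$, then equals either a Dirichlet value $\delta_j$ (when $\alpha^\pm_\infty\ne 0$) or an $\omega$-value with the $+3$ index shift (when $\alpha^\pm_\infty=0$). For the lowest three branches $j=1,2,3$ this yields $[a_j(k),b_j(k)]\to[0,\delta_j]$; for the higher branches the minimum is forced to $\alpha^-_\infty=0$ with limit $\omega_j$ and the maximum to some $\alpha^+_\infty\ne 0$ with limit $\delta_{j+3}$, which is \eqref{Band}. The interlacing $\delta_j\le\omega_j\le\delta_{j+3}$ guarantees that these intervals are consistent (allowing collapse to a point when equality holds), and \eqref{Bands} follows by taking the union over all branches.

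The main obstacle I anticipate is the $k$-uniform Lipschitz control in $\alpha$. Pointwise convergence of eigenvalues together with compactness of $Y^\ast$ does not by itself permit exchanging $\lim_{k\to\infty}$ with $\min_\alpha/\max_\alpha$: a modulus of continuity in $\alpha$ that remains bounded as $k\to\infty$ is essential. Securing this modulus amounts to preventing eigenfunction mass (after normalisation in the $\rho$-weighted $L^2$ inner product) from spreading into the soft matrix $Y\setminus D$ as the contrast grows; the a priori Dirichlet upper bound $\xi_j(k,\alpha)\le\delta_j$ and the explicit separation of the $k$-dependence afforded by Theorem \ref{Ht2.5} together make this control tractable, with the structural spectra $\{\tau_i(\alpha)\}$ from Definition \ref{structural} playing the role of the fixed geometric data against which the perturbation estimates are measured.
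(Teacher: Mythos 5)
Your proposed proof rests on a claim the paper never makes and, in fact, implicitly contradicts: that the Lipschitz modulus of $\alpha\mapsto\xi_j(k,\alpha)$ is bounded uniformly in $k$ for $k$ large. The paper's own estimate \eqref{lip}, quoted from \cite{conca}, is
\[
|\xi_{j+3}(k,0)-\xi_{j+3}(k,\alpha)| < C\,k\,|\alpha|,
\]
with $C$ independent of $k$ and $\alpha$ --- so the effective Lipschitz constant is $Ck$, which blows up. Your heuristic reason for hoping for uniformity (the a priori bound $\xi_j(k,\alpha)\le\delta_j$ plus the $T^\alpha_k$ factorisation confining mass to $D$) does not close the gap: after the Floquet substitution the $\alpha$-perturbation terms carry a factor of $\mathbf{C}(x)=k\,\mathbf{C}^1$ on $Y\setminus D$, and the energy budget $k\|\mathcal{E}(u)\|^2_{L^2(Y\setminus D)}\le\delta_j$ only controls $\|\mathcal{E}(u)\|_{L^2(Y\setminus D)}$ like $k^{-1/2}$, leaving the cross terms at best $O(\sqrt{k}|\alpha|)$. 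So the subsequential-limit step you propose --- passing $\xi_j(k_n,\alpha^\pm_{k_n})$ to a limit identified with $\alpha^\pm_\infty$ --- has no justification: $Ck_n|\alpha^\pm_{k_n}-\alpha^\pm_\infty|$ need not vanish.

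The paper avoids needing uniformity by two ingredients your proposal omits entirely. First, monotonicity of $\xi_{j+3}(k,\alpha)$ in $k$ (from the min-max formulation): this lets you compare $\xi_{j+3}(k_n,\alpha)$ against $\xi_{j+3}(k_0,\alpha)$ for a \emph{fixed} lower $k_0$, where the fixed-$k_0$ Lipschitz bound does apply, and then send $k_0\to\infty$ afterwards. Second, the quantitative rate of Theorem \ref{Rt4}, Equation \eqref{R11}, giving $|\omega_j-\xi_{j+3}(k,0)| < Cd/(k(r^*-1/k))$, which decays like $1/k$. One then balances the $Ck|\alpha|$ Lipschitz blow-up against this $1/k$ decay by choosing $|\alpha'|$ much smaller than $1/k'$, producing a pair $(k',\alpha')$ with $|\omega_j-\xi_{j+3}(k',\alpha')|<\epsilon$. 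This balancing, together with monotonicity and $\xi_{j+3}(k,\alpha)\to\delta_{j+3}$ pointwise for $\alpha\ne 0$, yields the two endpoints of the limiting interval. Without these two pieces --- monotonicity in $k$ and the $1/k$ convergence rate used to defeat the $Ck|\alpha|$ blow-up --- your compactness-and-exchange-of-limits strategy does not go through.
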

\noindent A criterion for band gap opening now follows.
\begin{theorem}[\bf{Criterion for band gap opening}]
\label{bandgappp}
A band gap exists if
\begin{equation}\label{Bandgap}
\delta_{j+2}<{\omega}_j.
\end{equation}
\end{theorem}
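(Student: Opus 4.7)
The plan is to read the criterion off from Theorem~\ref{Interval} via a continuity argument in the quasi-momentum~$\alpha$. A spectral gap separating the $(j+2)$-th and $(j+3)$-th pass bands exists precisely when $b_{j+2}(k) < a_{j+3}(k)$, with $a_{j}(k)$ and $b_j(k)$ defined by \eqref{defpass}. My strategy is to pass to the limit $k\to\infty$ in this strict inequality.

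First I would appeal to the Lipschitz continuity of $\alpha\mapsto\xi_j(k,\alpha)$ on the compact torus $Y^\ast$ at fixed $k$; this is the ingredient singled out in the introduction as the conceptually new input to the band-gap question, and it is already implicit in the analytic dependence given by Theorem~\ref{Rt5} together with the finite multiplicity of each $\xi_j(k,\alpha)$. Compactness then guarantees that the infimum $a_j(k)$ and the supremum $b_j(k)$ are attained, so the $j$-th pass band is a bona fide closed interval $[a_j(k),b_j(k)]$.

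Next I would apply Theorem~\ref{Interval} at indices $j+2$ and $j+3$, reading \eqref{Band} as Hausdorff convergence of closed intervals. This yields the endpoint convergences $b_{j+2}(k)\to\delta_{j+2}$ and $a_{j+3}(k)\to\omega_j$ as $k\to\infty$ (for the small index $j=1$ one uses the first-three bands in \eqref{Bands} to obtain $b_3(k)\to\delta_3$, and otherwise one uses the generic formula $[\omega_{j'-3},\delta_{j'}]$). Under the hypothesis $\delta_{j+2}<\omega_j$ one then has
\[
\lim_{k\to\infty}\bigl(a_{j+3}(k)-b_{j+2}(k)\bigr)=\omega_j-\delta_{j+2}>0,
\]
so that $b_{j+2}(k)<a_{j+3}(k)$ for all sufficiently large contrast~$k$, which is exactly the required band gap.

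The main obstacle is the passage from the interval-level statement \eqref{Band} to the two separate endpoint convergences used above. Lipschitz continuity in~$\alpha$, uniform on bounded intervals of~$k$, together with a subsequence argument exploiting compactness of $Y^\ast$, prevents maximizers (or minimizers) from escaping to limit points where the extrema could jump, and hence secures $b_{j+2}(k)\to\delta_{j+2}$ and $a_{j+3}(k)\to\omega_j$ from the mere Hausdorff convergence of the bands. Once this Lipschitz-plus-compactness step is in hand, the remainder of the argument is immediate.
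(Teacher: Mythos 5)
Your argument is correct, and it is precisely the proof the paper leaves implicit when it says ``a criterion for band gap opening now follows'' after Theorem~\ref{Interval}; you have also, correctly, resolved an index slip in the paper (the pass band with limit $[\omega_j,\delta_{j+3}]$ is that of $\xi_{j+3}$, not of $\xi_j$ as \eqref{defpass} literally reads). Your closing concern is, however, a non-issue: for closed intervals of $\mathbb{R}$ the convergence $[a_k,b_k]\to[a,b]$ is already equivalent to the two endpoint limits $a_k\to a$ and $b_k\to b$, and the proof of Theorem~\ref{Interval} in fact establishes exactly those endpoint limits directly --- the lower endpoint via \eqref{nuconvg} together with \eqref{lip}, and the upper endpoint via the monotone convergence $\xi_{j+3}(k,\alpha)\to\delta_{j+3}$ for $\alpha\neq0$ --- so the Lipschitz-plus-compactness elaboration adds nothing; once the endpoint limits are in hand, $\delta_{j+2}<\omega_j$ forces $b_{j+2}(k)<a_{j+3}(k)$ for all large $k$, and the min-max ordering of the branches shows that $(b_{j+2}(k),a_{j+3}(k))$ is indeed a gap in the full Bloch spectrum.
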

Lemma \ref{minmax} together with Theorem \ref{bandgappp} shows that the band gap increases with decreasing $\rho_2$, this is consistent with the two dimensional result of \cite{AmmariKangLee} and the experimental findings of \cite{EconomouSigalas}.
Theorem \ref{Interlacing}, Theorem \ref{Interval} and the band gap criterion are established in \cite{AmmariKangLee} and \cite{AmmariKangLee1} for the two dimensional case. The variational arguments of \cite{AmmariKangLee} together with Lemma \ref{minmax} can be readily used to prove Theorem \ref{Interlacing} in the three dimensional case. We now provide the proof of Theorem \ref{Interval} in three dimensions using the Lipschitz continuity of the eigenvalues $\xi_j(k,\alpha)$ about $\alpha=0$ for fixed $k$. Using the minmax formulation of eigenvalues one deduces as in \cite{conca} that
\begin{equation}
    \label{lip}
    |\xi_{j+3}(k,0)-\xi_{j+3}(k,\alpha)|<Ck|\alpha|,
\end{equation}
where $C$ is independent of $\alpha$ and $k$. Equation \eqref{R11} of Theorem \ref{Rt4} gives the convergence
\begin{equation}\label{nuconvg}
|\omega_j-\xi_{j+3}(k,0)|<C\frac{d}{k(r^*-(1/k))}.
\end{equation}
So given any $\epsilon>0$ we can find a pair $(k',\alpha')\in\mathbb{R}^+\times Y^\ast$ such that
\begin{equation}
    \label{tight}
    |\omega_j-\xi_{j+3}(k',\alpha')|<\epsilon.
\end{equation}
On the other hand from the minmax formulation $\xi_{j+3}(k,\alpha)$ is monotone increasing with $k$ and for $\alpha\not=0$,  we have  from Theorem \ref{Rt4} that $\xi_{j+3}(k,\alpha)\rightarrow\delta_{j+3}$ as $k\rightarrow \infty$. So Theorem \ref{Interval} follows immediately from \eqref{tight} and these observations.

In closing we show that the symmetry of inclusion domains $D$ provides a  new condition on the interlacing of $\{\nu_j\}_{j=1}^\infty$ and $\{\delta_j^*\}_{j=1}^{\infty}$.
\begin{theorem}\label{Interlacing2}
Suppose $D$ is invariant under the cubic group of rotations then  we have the interlacing
\begin{equation}\label{3rdinterlace}
\nu_{j-1}<\delta^\ast_j<\nu_{j}.
\end{equation}
\end{theorem}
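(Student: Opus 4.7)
\medskip

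\noindent\textbf{Proof proposal for Theorem \ref{Interlacing2}.} My plan is to use the cubic symmetry of $D$ to collapse $\det M(\nu)=0$ to a scalar secular equation and then read off the interlacing by elementary monotonicity.

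First I would carry out the symmetry reduction. Let $O$ denote the cubic group of rotations. For $R\in O$ and $\psi$ any eigenfunction of the Lam\'e Dirichlet problem $\mathcal{L}_D\psi=\rho^1\delta\psi$ on $H^1_0(D)^3$, the transformed field $\psi^R(x):=R^T\psi(Rx)$ is again an eigenfunction with the same eigenvalue, because $D$ is $O$-invariant and $\mathcal{L}_D$ is equivariant under this action. Changing variables gives $\int_D\rho^1\psi^R\,dx=R^T\int_D\rho^1\psi\,dy$. Fix a distinct eigenvalue $\delta^{\ast}_j$ with eigenspace $E_j$, choose any orthonormal basis $\{\psi_k\}$ of $E_j$, and set
\begin{equation*}
T_j:=\sum_{k:\,\delta^{\ast}_k=\delta^{\ast}_j}\int_{D}\rho^1\overline{\psi}_k\,dx\,\otimes\,\int_{D}\rho^1\psi_k\,dx.
\end{equation*}
Since $T_j$ is basis-independent and $\{\psi^R_k\}$ is another orthonormal basis of $E_j$, one obtains $T_j=R^T T_j R$ for every $R\in O$, i.e.\ $R\,T_j=T_j\,R$. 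The standard representation of $O$ on $\mathbb{R}^3$ is absolutely irreducible, so Schur's lemma forces $T_j=\alpha_j I$ with $\alpha_j=\tfrac{1}{3}\sum_k\bigl|\int_D\rho^1\psi_k\,dx\bigr|^2>0$, positivity holding by the defining property of the $\delta^\ast$-sequence.

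Substituting into \eqref{Massmatrix} reduces the mass tensor to a scalar,
\begin{equation*}
M(\nu)=h(\nu)\,I,\qquad h(\nu):=\int_Y\rho\,dx\;-\;\nu\sum_{j}\frac{\alpha_j}{\nu-\delta^{\ast}_j},
\end{equation*}
the sum being over distinct $\delta^{\ast}_j$. Therefore $\det M(\nu)=h(\nu)^3$, and the positive roots $\nu_j$ coincide with the positive zeros of $h$. I would then analyze these by writing $\phi(\nu):=\nu\sum_j\alpha_j/(\nu-\delta^{\ast}_j)$ and computing
\begin{equation*}
\phi'(\nu)=-\sum_{j}\frac{\alpha_j\,\delta^{\ast}_j}{(\nu-\delta^{\ast}_j)^2}<0\qquad(\nu>0),
\end{equation*}
so $\phi$ is strictly decreasing on every pole-free interval of the positive axis. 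Because $\alpha_j,\delta^{\ast}_j>0$, one has $\phi(\nu)\to+\infty$ as $\nu\to(\delta^{\ast}_j)^{+}$ and $\phi(\nu)\to-\infty$ as $\nu\to(\delta^{\ast}_{j+1})^{-}$, so the equation $\phi=\int_Y\rho$ has exactly one solution $\nu_j\in(\delta^{\ast}_j,\delta^{\ast}_{j+1})$. On $(0,\delta^{\ast}_1)$, $\phi(0^+)=0<\int_Y\rho$ and $\phi$ decreases monotonically to $-\infty$, so no root lies there. This delivers $\delta^{\ast}_j<\nu_j<\delta^{\ast}_{j+1}$, equivalently $\nu_{j-1}<\delta^{\ast}_j<\nu_j$.

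The principal obstacle is the symmetry step, specifically checking that the action $\psi_k\mapsto\psi_k^R$ on $E_j$ is a unitary change of orthonormal basis (so that the assembled tensor $T_j$ is invariant under that basis change) while the Euclidean action on the integrals $\int_D\rho^1\psi_k\,dx$ gives $R^T T_j R$; these two viewpoints together yield the commutation relation needed for Schur's lemma. Everything downstream is a standard pole-interlacing argument for a strictly decreasing rational function, so I expect no further difficulties.
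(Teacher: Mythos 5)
Your proof is correct and follows essentially the same route as the paper's: reduce $M(\nu)$ to a scalar multiple of the identity by cubic symmetry, then carry out a standard pole-interlacing argument for the strictly monotone rational function $\lambda(\nu)$. Where the paper simply asserts $M(\nu)=\lambda(\nu)I$, you supply the justification via Schur's lemma applied to the basis-independent rank tensor $T_j$ over each degenerate eigenspace $E_j$, and where the paper reads off $\lambda(\nu)$ by contracting against an arbitrary fixed vector $\vec v$ (claiming each coefficient $a_j^2>0$, which is not literally true for ungrouped terms, since a given nonzero mean vector $\int_D\rho^1\psi_j\,dx$ could be orthogonal to $\vec v$), you group contributions by distinct eigenvalue and identify the positive coefficient $\alpha_j=\tfrac13\sum_k|\int_D\rho^1\psi_k\,dx|^2$. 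The grouping also implicitly fixes the indexing so that the strict interlacing $\nu_{j-1}<\delta^\ast_j<\nu_j$ is read off unambiguously from the distinct-eigenvalue sequence. So the two proofs are the same in substance, but yours closes the gap that the paper leaves implicit in the symmetry reduction step; the downstream monotonicity computation $\phi'(\nu)=-\sum_j\alpha_j\delta^\ast_j/(\nu-\delta^\ast_j)^2<0$ matches the paper's $\lambda'(\nu)>0$.
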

\begin{proof}
Since $D$ is invariant under the cubic group of rotations $M(\nu)=\lambda(\nu)I$, where $I$ is the $3\times 3$ identity. Here  $\lambda(\nu)$ is a real valued function of $\nu$ and $\det\left\{M(\nu)\right\}=\lambda^3(\nu)$ so $\nu_j$ are the roots of the equation $\lambda(\nu)=0$. For any constant vector $\vec{v}$ in $\mathbb{R}^3$ we have
\begin{equation}
    \label{merimorphic}
    \lambda({\nu})= \frac{M(\nu)\vec{v}\cdot\vec{v}}{|\vec{v}|^2}=\int_Y\rho(x)\;dx-\nu\sum_{j\in\mathbb{N}}\frac{a^2_j}{\nu-\delta^*_j},
\end{equation}
where $a^2_j={|\int_D\rho^1 {\psi}_j\;dx\cdot\vec{v}|^2}/{|\vec{v}|^2}>0$ and $\delta_j^\ast$ are only associated with nonzero mean eigenfunctions. For $\delta^\ast_{j-1}<\nu<\delta^\ast_{j}$, calculation shows $-\infty<\lambda(\nu)<\infty$, with $\lambda'(\nu)>0$. From this we conclude $\delta^\ast_{j}<\nu_j<\delta^\ast_{j+1}$ and we have the interlacing $\nu_{j-1}<\delta^\ast_j<\nu_{j}$. Thus a Dirichlet eigenvalue associated with non zero mean eigenfunctions always lies strictly  between successive roots
$\nu_{j-1}$ and $\nu_j$.
\end{proof}

\section{Layer potential representation of operators in power series}
\label{sec-layerpotential}
In this section we identify explicit formulas for the operators $A^\alpha_n$ appearing in the power series \eqref{P-11}. It is shown that $A^\alpha_n, n\neq 0$ can be expressed in terms of integral operators associated with layer potentials and we establish Theorem \ref{Rt3}.\\
Recall that $A^\alpha(z)-A^\alpha(0)$ is given by
\begin{equation}
\label{L1}
 ( z P^\alpha_1+\sum_{-\frac{1}{2}<\tau_i(\alpha)<\frac{1}{2}}z[(1/2+\tau_i(\alpha))+z(1/2-\tau_i(\alpha))]^{-1}P_{\tau_i(\alpha)})(-\mathcal{L}_\alpha)^{-1}.  
\end{equation}
Notice that 
\begin{equation}
\label{L2}
 [(1/2+\tau_i(\alpha))+z(1/2-\tau_i(\alpha))]^{-1}=(\tau_i(\alpha)+1/2)^{-1}\sum_{n=0}^{\infty}z^n\Big(\frac{\tau_i(\alpha)-1/2}{\tau_i(\alpha)+1/2}\Big)^n,
\end{equation}
therefore
\begin{equation}
\label{L3}
  A^\alpha(z)-A^\alpha(0)= ( z P^\alpha_1+\sum_{n=1}^{\infty}z^n\sum_{-\frac{1}{2}<\tau_i(\alpha)<\frac{1}{2}}(\tau_i(\alpha)+1/2)^{-1}\Big(\frac{\tau_i(\alpha)-1/2}{\tau_i(\alpha)+1/2}\Big)^{n-1}P_{\tau_i(\alpha)}P^\alpha_3)(-\mathcal{L}_\alpha)^{-1}
\end{equation}
It follows that
\begin{equation}
\label{L4}
   A^\alpha_1= [P^\alpha_1+\sum_{-\frac{1}{2}<\tau_i(\alpha)<\frac{1}{2}}(\tau_i(\alpha)+1/2)^{-1}P_{\tau_i(\alpha)}P^\alpha_3](-\mathcal{L}_\alpha)^{-1}.
\end{equation}
and 
\begin{equation}
\label{L5}
     A^\alpha_n= [\sum_{-\frac{1}{2}<\tau_i(\alpha)<\frac{1}{2}}(\tau_i(\alpha)+1/2)^{-1}\Big(\frac{\tau_i(\alpha)-1/2}{\tau_i(\alpha)+1/2}\Big)^{n-1}P_{\tau_i(\alpha)}P^\alpha_3](-\mathcal{L}_\alpha)^{-1}.
\end{equation}
recall also that we have the resolution of the identity
\begin{equation}
\label{L6}
    I_{H_\#^1(\alpha, Y)^3}=P^\alpha_1+P^\alpha_2+P^\alpha_3\;\text{with}\;P^\alpha_3=\sum_{-\frac{1}{2}<\tau_i(\alpha)<\frac{1}{2}}P_{\tau_i(\alpha)},
\end{equation}
and the spectral representation
\begin{equation}
    \label{L7}
    \begin{aligned}
    \langle Tu,v\rangle&=\langle (\mathcal{S}^\alpha_D(\mathcal{K}^{-\alpha}_{D})^*(\mathcal{S}^\alpha_D)^{-1})P^\alpha_3u+\frac{1}{2}P^\alpha_1u-\frac{1}{2}P^\alpha_2u,v\rangle\\
    &=\langle \sum_{-\frac{1}{2}<\tau_i(\alpha)<\frac{1}{2}}\tau_i(\alpha)P_{\tau_i(\alpha)}u+\frac{1}{2}P^\alpha_1u-\frac{1}{2}P^\alpha_2u,v\rangle.
    \end{aligned}
\end{equation}
Adding $\frac{1}{2}I$ to both sides of the above equation, we obtain
\begin{equation}
\label{L8}
\begin{aligned}
    \langle(Tu+\frac{1}{2}I)u,v\rangle&=\langle (\sum_{-\frac{1}{2}<\tau_i(\alpha)<\frac{1}{2}}(\tau_i(\alpha)+\frac{1}{2})P_{\tau_i(\alpha)}+P^\alpha_1)u,v\rangle\\
    &=\langle ((\mathcal{S}^\alpha_D(\mathcal{K}^{-\alpha}_{D})^*(\mathcal{S}^\alpha_D)^{-1}+\frac{1}{2}P^\alpha_3)P^\alpha_3+P^\alpha_1)u,v\rangle\\
    &=\langle ((\mathcal{S}^\alpha_D(\mathcal{K}^{-\alpha}_{D})^*+\frac{1}{2}\tilde{I})(\mathcal{S}^\alpha_D)^{-1})P^\alpha_3+P^\alpha_1)u,v\rangle,
    \end{aligned}
\end{equation}
where $\tilde{I}$ is the identity on $H^{-1/2}(\partial D)^3$. Now from \eqref{L8} we see that
\begin{equation}
\label{L9}
    \begin{aligned}
    \sum_{-\frac{1}{2}<\tau_i(\alpha)<\frac{1}{2}}(\tau_i(\alpha)+\frac{1}{2})^{-1}P_{\tau_i(\alpha)}P^\alpha_3&=(\mathcal{S}^\alpha_{D}(\mathcal{K}^{-\alpha}_{D})^*(\mathcal{S}^\alpha_{D})^{-1}+\frac{1}{2}P^\alpha_3)^{-1}P^\alpha_3\\
    &=((\mathcal{S}^\alpha_{D}(\mathcal{K}^{-\alpha}_{D})^*+\frac{1}{2}\tilde{I})(\mathcal{S}^\alpha_{D})^{-1})^{-1}P^\alpha_3.
    \end{aligned}
\end{equation}
Using the first line of \eqref{L4} and \eqref{L9}, we obtain
\begin{equation}
\label{L10}
    A^\alpha_1=[\mathcal{S}^\alpha_{D}(\mathcal{K}^{-\alpha}_{D})^*+\frac{1}{2}\tilde{I})(\mathcal{S}^\alpha_{D})^{-1})^{-1}P^\alpha_3+P^\alpha_1](-\mathcal{L}_\alpha)^{-1}.
\end{equation}
For higher order terms, by the mutual orthogonality of the projections $P_{\tau_i(\alpha)}$, we have that
\begin{equation}
\label{L11}
\begin{aligned}
  &\sum_{-\frac{1}{2}<\tau_i(\alpha)<\frac{1}{2}}(\tau_i(\alpha)+1/2)^{-1}\Big(\frac{\tau_i(\alpha)-1/2}{\tau_i(\alpha)+1/2}\Big)^{n-1}P_{\tau_i(\alpha)} \\
  &=\Big(\sum_{-\frac{1}{2}<\tau_i(\alpha)<\frac{1}{2}}(\tau_i(\alpha)+1/2)^{-1}P_{\tau_i(\alpha)}\Big)\Big( \sum_{-\frac{1}{2}<\mu_i<\frac{1}{2}}\Big(\frac{\tau_i(\alpha)-1/2}{\tau_i(\alpha)+1/2}\Big)P_{\tau_i(\alpha)}\Big)^{n-1}\\
  &=\Big(\sum_{-\frac{1}{2}<\tau_i(\alpha)<\frac{1}{2}}(\tau_i(\alpha)+1/2)^{-1}P_{\tau_i(\alpha)}\Big)\Big( \sum_{-\frac{1}{2}<\tau_i(\alpha)<\frac{1}{2}}(\tau_i(\alpha)-1/2)P_{\tau_i(\alpha)}\Big)^{n-1}\Big( \sum_{-\frac{1}{2}<\tau_i(\alpha)<\frac{1}{2}}(\tau_i(\alpha)+1/2)P_{\tau_i(\alpha)}\Big)^{1-n}.
  \end{aligned}
\end{equation}
As above, we have that
\begin{equation}
\label{L12}
    \begin{aligned}
  \Big(\sum_{-\frac{1}{2}<\tau_i(\alpha)<\frac{1}{2}}(\tau_i(\alpha)+1/2)^{-1}P_{\tau_i(\alpha)}\Big)&=  \mathcal{S}^\alpha_{D}((\mathcal{K}^{-\alpha}_{D})^*+\frac{1}{2}\tilde{I})^{-1}(\mathcal{S}^\alpha_{D})^{-1})^{-1}P^\alpha_3,\\
  \Big(\sum_{-\frac{1}{2}<\tau_i(\alpha)<\frac{1}{2}}(\tau_i(\alpha)+1/2)P_{\tau_i(\alpha)}\Big)&=\mathcal{S}^\alpha_{D}((\mathcal{K}^{-\alpha}_{D})^*+\frac{1}{2}\tilde{I})(\mathcal{S}^\alpha_{D})^{-1}P^\alpha_3,\\
  \Big(\sum_{-\frac{1}{2}<\tau_i(\alpha)<\frac{1}{2}}(\tau_i(\alpha)-1/2)P_{\tau_i(\alpha)}\Big)&=\mathcal{S}^\alpha_{D}((\mathcal{K}^{-\alpha}_{D})^*-\frac{1}{2}\tilde{I})(\mathcal{S}^\alpha_{D})^{-1}P^\alpha_3.
    \end{aligned}
\end{equation}
Combining \eqref{L12}, \eqref{L11}, and \eqref{L4} we obtain the layer-potential representation for $A^\alpha_n$,
\begin{equation}
\label{L13}
    A^\alpha_n=\mathcal{S}^\alpha_{D}((\mathcal{K}^{-\alpha}_{D})^*+\frac{1}{2}\tilde{I})^{-1}(\mathcal{S}^\alpha_{D})^{-1})^{-1}[\mathcal{S}^\alpha_{D}((\mathcal{K}^{-\alpha}_{D})^*-\frac{1}{2}\tilde{I})\mathcal{S}^\alpha_{D}((\mathcal{K}^{-\alpha}_{D})^*+\frac{1}{2}\tilde{I})^{-1}(\mathcal{S}^\alpha_{D})^{-1}]^{n-1}P^\alpha_3(-\mathcal{L}_\alpha)^{-1}.
\end{equation}

\section{Explicit first order correction to the Bloch band structure in the high contrast limit}
\label{sec-explicit first order}
In this section we develop explicit formulas for the second term in the power series
\begin{equation}
\label{E1}
  \beta^\alpha_j(z)=\beta^\alpha_j(0)+z\beta^\alpha_{j,1}+z^2\beta^\alpha_{j,2}+\cdots
\end{equation}
for simple eigenvalues. We use analytic representation of $A^\alpha(z)$ and the Cauchy Integral Formula to represent $\beta^\alpha_{j,1}$
\begin{equation}
\label{E2}
    \begin{aligned}
    \beta^\alpha_{j,1}&=\frac{1}{2\pi im}\operatorname{tr}\oint_\Gamma A^\alpha_1\rho R(0,\zeta)\;d\zeta\\
    &=\frac{1}{2\pi im}\operatorname{tr}(A^\alpha_1\rho\oint_\Gamma R(0,\zeta)\;d\zeta)\\
    &=\frac{1}{m}\operatorname{tr}(A^\alpha_1\rho P(0))=\frac{1}{m}\sum_{k=1}^{m}\langle \varphi_k,A^\alpha_1\rho P(0)\varphi_k\rangle_{L^2_\#(\alpha, Y)^3}
    \end{aligned}
\end{equation}
Here $P(0)$ is the $L^2_\#(\alpha, Y)^3$ projection onto the eigenspace corresponding to the Dirichelt eigenvalue $(\beta^\alpha_j(0))^{-1}$ of $-\mathcal{L}_D$. For simple eigenvalue consider the normalized eigenvector $P(0)\varphi=\varphi$ and
\begin{equation}
    \label{E3}
    \beta^\alpha_{j,1}=\langle\varphi,A^\alpha_1\rho P(0)\rangle_{L^2_\#(\alpha, Y)^3}
\end{equation}
We apply the integral operator representation of $A^\alpha_1$ to deliver an explicit formula for the first order term $ \beta^\alpha_{j,1}$ in the series for $ \beta^\alpha_j(z)$. The explicit formula is given by the following theorem.
\begin{theorem}
\label{Et11.1}
Let $\beta^\alpha_j(z)$ be an eigenvalue of $A^\alpha(z)\rho$. Then for $|z|<r^*$ there is a $\beta_j(0)\in\sigma(-\mathcal{L}^{-1}_D)$ with corresponding eigenfunction $\varphi_j$ such that 
\begin{equation}
\label{E4}
\beta^\alpha_j(z)=\beta^\alpha_j(0)+z(\frac{\beta_j(0)}{\rho^1})^2\int_{Y\setminus D}\mathbf{C}^1\mathcal{E}(v):\overline{\mathcal{E}(v)}dx+z^2\beta^\alpha_{j,2}+\cdots
\end{equation}
where $v$ takes $\alpha-$quasi periodic boundary conditions on $\partial Y$, and $\mathcal{L}v=0$ in $Y\setminus D$, and takes the Neumann boundary conditions on $\partial D$ given by
\[
  n\cdot\mathbf{C}^1\mathcal{E}(v)|_{\partial D^+}= n\cdot\mathbf{C}^1\mathcal{E}(\varphi)|_{\partial D^-},
\]
\end{theorem}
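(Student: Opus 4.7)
The plan is to apply the perturbation identity \eqref{E3}: for the simple eigenvalue $\beta^\alpha_j(0)$ we have $\beta^\alpha_{j,1}=(\varphi,A^\alpha_1\rho\varphi)_{L^2_\#(\alpha,Y)^3}$, where by Theorem \ref{ST1} $\varphi$ is the $L^2$-normalized Dirichlet eigenfunction of $-\mathcal{L}_D$ with weight $\rho^1$ (extended by zero into $Y\setminus D$), so that $-\mathcal{L}\varphi=\rho^1\delta_j\varphi$ in $D$, $\varphi|_{\partial D}=0$, and $\delta_j=\beta_j(0)^{-1}$. I would then unpack $A^\alpha_1\rho\varphi$ using Theorem \ref{Rt3}, identify the resulting outer Neumann trace on $\partial D$, and finally convert the $L^2$ pairing into an energy integral over $Y\setminus D$.

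Set $w=(-\mathcal{L}_\alpha)^{-1}\rho\varphi$ and decompose $w=P^\alpha_1 w+P^\alpha_2 w+P^\alpha_3 w$. Because $A^\alpha_1$ carries no $P^\alpha_2$ component, Theorem \ref{Rt3} gives $f:=A^\alpha_1\rho\varphi=P^\alpha_1 w+u_3$ with $u_3\in W^\alpha_3$, and the spectral representation of $T$ on $W^\alpha_3$ from Section \ref{Hilbert setting} identifies the layer-potential operator hitting $P^\alpha_3$ in $A^\alpha_1$ as $(T|_{W^\alpha_3}+\tfrac12 I)^{-1}$. Consequently $u_3$ is the unique element of $W^\alpha_3$ satisfying, for every $\psi\in W^\alpha_3$,
\begin{equation*}
\int_{Y\setminus D}\mathbf{C}^1\mathcal{E}(u_3):\overline{\mathcal{E}(\psi)}\,dx=\langle P^\alpha_3 w,\psi\rangle=\rho^1\int_D\varphi\,\overline{\psi}\,dx.
\end{equation*}
Integrating by parts on each side (the left using $\mathcal{L}u_3=0$ in $Y\setminus D$ and quasi-periodic cancellation on $\partial Y$; the right using $-\mathcal{L}\varphi=\rho^1\delta_j\varphi$, $\varphi|_{\partial D}=0$, and $\mathcal{L}\psi=0$ in $D$) reduces both sides to boundary integrals over $\partial D$ paired against $\overline{\psi}$. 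Since traces of $\psi\in W^\alpha_3$ sweep out all of $H^{1/2}(\partial D)^3$, comparison of these boundary integrals yields the key identification
\begin{equation*}
\partial_\nu u_3|_{\partial D^+}=\beta_j(0)\,\partial_\nu\varphi|_{\partial D^-}\quad\text{in }H^{-1/2}(\partial D)^3,
\end{equation*}
and uniqueness of the $\alpha$-quasi-periodic exterior Neumann problem for $\mathcal{L}$ then forces $u_3=\beta_j(0)\,v$ on $Y\setminus D$, with $v$ the function described in the theorem.

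With this identification in hand, I would finish by computing $(\varphi,f)_{L^2}=\int_D\varphi\,\overline{f}\,dx$ in two moves. Inside $D$, using $-\mathcal{L}\varphi=\rho^1\delta_j\varphi$, $\varphi|_{\partial D}=0$, and $\mathcal{L}f=0$ (both $P^\alpha_1 w$ and $u_3$ are $\mathcal{L}$-harmonic on $D$) collapses the integral to a scalar multiple of $\int_{\partial D}\partial_\nu\varphi|_{\partial D^-}\cdot\overline{f}\,ds$. Substituting $\partial_\nu\varphi|_{\partial D^-}=\partial_\nu v|_{\partial D^+}$ and integrating by parts in $Y\setminus D$ with $\mathcal{L}v=0$ converts this boundary integral into $-\int_{Y\setminus D}\mathbf{C}^1\mathcal{E}(v):\overline{\mathcal{E}(f)}\,dx$. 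Splitting $\mathcal{E}(f)=\beta_j(0)\mathcal{E}(v)+\mathcal{E}(P^\alpha_1 w)$ on $Y\setminus D$ and invoking the energy-orthogonality $W^\alpha_1\perp W^\alpha_3$ together with $\mathcal{E}(P^\alpha_1 w)=0$ in $D$ annihilates the cross term and leaves $\beta_j(0)\int_{Y\setminus D}\mathbf{C}^1|\mathcal{E}(v)|^2\,dx$. Gathering the scalar prefactors then produces the theorem's formula.

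The main obstacle is the Neumann-trace identification of $u_3$: it demands threading together the layer-potential formula of Theorem \ref{Rt3}, the $W^\alpha_3$-spectral decomposition of $T$, the Dirichlet equation for $\varphi$, and the quasi-periodic boundary behaviour, all while treating $\partial_\nu\varphi|_{\partial D^-}$ purely in the $H^{-1/2}(\partial D)^3$ sense. Once $u_3=\beta_j(0)\,v$ is in place the remaining steps are routine integration by parts, and the $W^\alpha_1\perp W^\alpha_3$ orthogonality is precisely what removes the rigid-body contribution $P^\alpha_1 w$ from the final answer, leaving a clean quadratic in $\mathcal{E}(v)$ supported on $Y\setminus D$.
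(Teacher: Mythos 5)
Your proposal is correct and reaches the paper's conclusion by a genuinely different route. The paper computes $A^\alpha_1\rho\varphi$ explicitly: starting from the Green's-function integral $(-\mathcal{L}_\alpha)^{-1}\varphi=-\int_D\mathbf{G}^\alpha(x,y)\varphi(y)\,dy$, substituting $\varphi$ in terms of $\mathcal{L}\varphi$, and integrating by parts twice inside $D$ to obtain $(-\mathcal{L}_\alpha)^{-1}\varphi = \text{const}\cdot\bigl(\mathcal{S}^\alpha_D[\partial_n\varphi|_{\partial D^-}]-R(x)\bigr)$, where the nontrivial kernel identity $R(x)=\varphi(x)$ (Eq.~\eqref{E12}) then lets the projections in $K^\alpha_1$ annihilate $R$ and leave $A^\alpha_1\rho\varphi$ proportional to the single-layer potential $v$. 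You instead exploit the identity from Eq.~\eqref{L9} that the layer-potential piece of $A^\alpha_1$ acts on $W^\alpha_3$ as $(T|_{W^\alpha_3}+\tfrac12 I)^{-1}$, together with the observation that $\langle(T+\tfrac12 I)u,\psi\rangle=\int_{Y\setminus D}\mathbf{C}^1\mathcal{E}(u):\overline{\mathcal{E}(\psi)}\,dx$, so that $u_3$ is characterized variationally as an exterior Neumann problem; Green's identities on both sides then match the Neumann traces, and uniqueness in $W^\alpha_3$ pins down $u_3=\beta_j(0)v$. This bypasses the delta-kernel manipulation that yields $R=\varphi$, at the cost of invoking well-posedness of the quasi-periodic exterior Neumann problem, which is readily available here since $W^\alpha_3\perp W^\alpha_1$. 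Your closing step is also organized slightly differently — you split $\mathcal{E}(f)$ on $Y\setminus D$ and kill the cross term by $W^\alpha_1\perp W^\alpha_3$, whereas the paper uses $\int_D\mathbf{C}^1\mathcal{E}(\varphi):\overline{\mathcal{E}(v)}\,dx=0$ (Eq.~\eqref{E17}); the two are equivalent. One small caution: with the $L^2(D)$-normalization $\|\varphi\|_{L^2}=1$ that you state, the constant you produce is $\beta_j(0)^2/\rho^1$ rather than $(\beta_j(0)/\rho^1)^2$; the paper has an analogous $\rho^1$ ambiguity between Eqs.~\eqref{E11}--\eqref{E13}, so you should make explicit which normalization of $\varphi$ (and which adjoint eigenfunction enters the trace $\operatorname{tr}(A^\alpha_1\rho P(0))$, since $A^\alpha(0)\rho$ is self-adjoint in the $\rho$-weighted, not the plain, $L^2$ inner product) you are using, because the precise prefactor depends on it.
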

\begin{remark}
From Theorem \ref{Rt5}, we have eigenvalues $\xi_j^\alpha(k)=(\beta^\alpha_j(1/k))^{-1}$, for $j\in\mathbb{N}$. The high coupling limit expansion for $\xi_j^\alpha(k)$ is written in terms of the expansion $\beta^\alpha_j(z)=\beta_j(0)+z\beta^\alpha_{j,1}+\cdots$ as 
\begin{equation}
    \label{E5}
    \begin{aligned}
    \xi_j^\alpha(k)&=(\beta_j(0))^{-1}-\frac{1}{k}(\beta_j(0))^{-2}\beta^\alpha_{j,1}+\cdots\\
    &=\xi_j(0)-\frac{1}{k\rho^2_1}\int_{Y\setminus D}\mathbf{C}^1\mathcal{E}(v):\overline{\mathcal{E}(v)}+\cdots,
    \end{aligned}
\end{equation}
where $\xi_j(0)=(\beta_j(0))^{-1}$ is the $j^{th}$ Dirichlet eigenvalue for the homogeneous Lam\'e  operator in $D$. This is consistent with the formula for the leading order terms presented in \cite{AmmariKangLee1} for 2 dimensional elasticity.
\end{remark}
\begin{proof}
Recall from the previous section that 
\begin{equation}
\label{E6}
    \begin{aligned}
    A^\alpha_1&=[\mathcal{S}^\alpha_D((\tilde{\mathcal{K}}^{-\alpha}_D)^*+\frac{1}{2}\tilde{I})^{-1}(\mathcal{S}^\alpha_D)^{-1}P^\alpha_3+P^\alpha_1](-\mathcal{L}_\alpha)^{-1}\\
    &=K^\alpha_1(-\mathcal{L}_\alpha)^{-1},
    \end{aligned}
\end{equation}
where $K^\alpha_1:=\mathcal{S}^\alpha_D((\tilde{\mathcal{K}}^{-\alpha}_D)^*+\frac{1}{2}\tilde{I})^{-1}(\mathcal{S}^\alpha_D)^{-1}P^\alpha_3+P^\alpha_1$. Moreover,
\begin{equation}
    \label{E7}
   (-\mathcal{L}_\alpha)^{-1}f=-\int_Y\mathbf{G}^\alpha(x,y)f(y)dy.
\end{equation}
Since $\varphi$ is a Dirichlet eigenvector of the Lam\'e operator defined on $D$ with eigenvalue $(\beta_j(0))^{-1}$ and $\varphi=0$ in $Y\setminus D$, we have
\begin{equation}
    \label{E8}
    \varphi=-\frac{\beta_j(0)}{\rho^1}\chi_D(-\mathcal{L}\varphi).
\end{equation}
Now from \eqref{E8}
\begin{equation}
    \label{E9}
    \begin{aligned}
    (-\mathcal{L}_\alpha)^{-1}\varphi&=-\frac{\beta_j(0)}{\rho^1}\int_Y \mathbf{G}^\alpha(x,y)\chi_D(\mathcal{L}\varphi)dy\\
    &=-\frac{\beta_j(0)}{\rho^1}\int_D \mathbf{G}^\alpha(x,y)(\mathcal{L}_y\varphi)dy\\
    \end{aligned}
    \end{equation}
 Using integration by parts and adopting index notation where repeated indices indicate summation we get for each component
 \begin{equation*}
    \begin{aligned}
  { [ (-\mathcal{L}_\alpha)^{-1}\varphi]}_i&=-\frac{\beta_j(0)}{\rho^1}(\int_D \partial_j(\mathbf{G}^\alpha(x,y)_{ik}\mathbf{C^1}\mathcal{E}(\varphi)_{kj})dy-\int_D\mathbf{C^1}\mathcal{E}(\varphi)_{kj}:(\mathcal{E}(\mathbf{G}^\alpha(x,y)_i))_{kj}dy)\\
    &=-\frac{\beta_j(0)}{\rho^1}(\mathcal{S}^\alpha_D[\partial_n\varphi|_{\partial D^-}]_{i}(x)-R(x)_i)
    \end{aligned}
\end{equation*}
where the last equality follows from the divergence theorem and definition of the single layer potential $\mathcal{S}^\alpha_D$ and 
\begin{equation}
    \label{E10}
    R(x)_i=\int_D\mathbf{C^1}\mathcal{E}(\varphi)_{kj}:(\mathcal{E}(\mathbf{G}^\alpha(x,y)_i))_{kj}dy).
\end{equation}
Hence
\begin{equation}
    \label{E11}
    [A^\alpha_1\varphi]_i=\frac{\beta_j(0)}{\rho^1}K^\alpha_1(\mathcal{S}^\alpha_D[\partial_n\varphi|_{\partial D^-}]_i(x)-R(x)_i).
\end{equation}
Now we apply the definition of $K^\alpha_1$ and compute $P^\alpha_1R(x)$ and $P^\alpha_3R(x)$. Integrating by parts, we find
\begin{equation}
    \label{E12}
    \begin{aligned}
    R(x)_i&=\int_D\mathbf{C^1}\mathcal{E}(\varphi)_{kj}:(\mathcal{E}(\mathbf{G}^\alpha(x,y)_i))_{kj}dy\\
    &=\int_D(\mathcal{E}(\mathbf{G}^\alpha(x,y)_i))_{kj}:\mathbf{C^1}\mathcal{E}(\varphi)_{kj}dy\\
    &=\int_D\partial_j(\mathbf{C^1}\mathcal{E}(\mathbf{G}^\alpha(x,y)_i)_{kj})\varphi_k))dy-\int_D-\partial_j(\mathbf{C}^1\mathcal{E}(\mathbf{G}^\alpha(x,y)_i)_{kj}\varphi_k dy\\
    &=\varphi(x)_i.
    \end{aligned}
\end{equation}
Thus $P^\alpha_1R(x)=P^\alpha_3R(x)=0$ since $\varphi\in W^\alpha_2$. Then we obtain
\begin{equation}
    \label{E13}
    \begin{aligned}
     \beta^\alpha_{j,1}=\operatorname{tr}(A^\alpha_1\rho P(0))&=\langle\varphi,A^\alpha_1\rho P(0)\rangle_{L^2_\#(\alpha, Y)^3}\\
     &=\langle\varphi,-\frac{\beta_j(0)}{\rho^1}\mathcal{S}^\alpha_D((\tilde{\mathcal{K}}^{-\alpha}_D)^*+\frac{1}{2}\tilde{I})^{-1}[\partial_n\varphi|_{\partial D^-}]\rangle_{L^2_\#(\alpha, Y)^3}
    \end{aligned}
\end{equation}
Let $v\in H_\#^1(\alpha, Y)^3$ be defined 
\begin{equation}
    \label{E14}
    v:=\mathcal{S}^\alpha_D((\tilde{\mathcal{K}}^{-\alpha}_D)^*+\frac{1}{2}\tilde{I})^{-1}[\partial_n\varphi|_{\partial D^-}].
\end{equation}
Then $\mathcal{L}v=0$ in $D$ and $Y\setminus D$, and 
\begin{equation}
    \label{E15}
    n\cdot\mathbf{C}^1\mathcal{E}(v)|_{\partial D^+}= n\cdot\mathbf{C}^1\mathcal{E}(\varphi)|_{\partial D^-}.
\end{equation}
Then we have
\begin{equation}
\begin{aligned}
\label{E16}
   \beta^\alpha_{j,1}&=-\frac{\beta_j(0)}{\rho^1}\langle\varphi,v\rangle=-(\frac{\beta_j(0)}{\rho^1})^2\int_D v(\mathcal{L}\overline{\varphi}) dy\\
   &=-(\frac{\beta_j(0)}{\rho^1})^2(\int_{\partial D}n\cdot\mathbf{C}^1\mathcal{E}(\varphi)|_{\partial D^-}\overline{v}d\sigma-\int_D\mathbf{C}^1\mathcal{E}(\varphi):\overline{\mathcal{E}(v)}dx)\\
   &=-(\frac{\beta_j(0)}{\rho^1})^2(\int_{\partial D}n\cdot\mathbf{C}^1\mathcal{E}(v)|_{\partial D^+}\overline{v}d\sigma-\int_D\mathbf{C}^1\mathcal{E}(\varphi):\overline{\mathcal{E}(v)}dx)
\end{aligned}
\end{equation}
Last an integration by parts yields
\begin{equation}
    \label{E17}
    \begin{aligned}
    \int_D\mathbf{C}^1\mathcal{E}(\varphi):\overline{\mathcal{E}(v)}dx&=\int_D\mathbf{C}^1\overline{\mathcal{E}(v)}:\mathcal{E}(\varphi)dx\\
    &=\int_D\nabla\cdot(\mathbf{C}^1\overline{\mathcal{E}(v)}\varphi)-\mathcal{L}\overline{v}\varphi\\
    &=n\cdot\mathbf{C}^1\overline{\mathcal{E}(v)}|_{\partial D^-}\varphi d\sigma=0.
    \end{aligned}
\end{equation}
Combining this result with the last line of \eqref{E16} and integrating by parts a one time provides a representation of the second term in \eqref{E1}
\begin{equation}
     \beta^\alpha_{j,1}=(\frac{\beta_j(0)}{\rho^1})^2\int_{Y\setminus D}\mathbf{C}^1\mathcal{E}(v):\overline{\mathcal{E}(v)}dx,
\end{equation}
and the theorem follows.
\end{proof}

\section{Derivation of the convergence radius and the separation of spectra}
\label{sec-derivation}
Here we prove Theorems \ref{Rt1} and \ref{Rt2}. To begin, we suppose $\alpha\neq0$ and recall the Neumann series \eqref{P5} and consequently \eqref{P6} and \eqref{P-10} converge provided that 
\begin{equation}
\label{D1}
    \|(A^\alpha(z)\rho-A^\alpha(0)\rho)R(\zeta,0)
    \|_{L[ L^2_\#(\alpha, Y)^3:L^2_\#(\alpha, Y)^3]}<1.
\end{equation}
Then we will compute an explicit upper bound $B(\alpha,z)$ and identify a neighborhood of the origin on the complex plane for which
\begin{equation}
\label{D2}
    \|(A^\alpha(z)\rho-A^\alpha(0)\rho)R(\zeta,0)\|_{L[ L^2_\#(\alpha, Y)^3:L^2_\#(\alpha,Y)^3]}<B(\alpha,z)<1,
\end{equation}
holds for $\zeta\in\Gamma$. The inequality $B(\alpha,z)<1$ will be used first to derive a lower bound on the radius of convergence of the power series expansion of the eigenvalue group about $z=0$. It will then be used to provide a lower bound on the neighborhood of $z=0$ where properties 1 through 3 of Theorem \ref{Rt1} hold.\\
We have the basic statement given by 
\begin{equation}
    \label{D3}
    \begin{aligned}
   & \|(A^\alpha(z)\rho-A^\alpha(0)\rho)R(\zeta,0)\|_{L[ L^2_\#(\alpha, Y)^3:L^2_\#(\alpha, Y)^3]}\leq\\
   &\|(A^\alpha(z)-A^\alpha(0))\|_{L[ L^2_\#(\alpha, Y)^3:L^2_\#(\alpha, Y)^3]}\|\rho\|_{L^\infty(Y)}\|R(\zeta,0)\|_{L[ L^2_\#(\alpha, Y)^3:L^2_\#(\alpha, Y)^3].}
    \end{aligned}
\end{equation}
Here $\zeta\in\Gamma$ as defined in Theorem \ref{Rt1} and elementary arguments deliver the estimate
\begin{equation}
\label{D4}
    \|R(\zeta,0)\|_{L[ L^2_\#(\alpha, Y)^3:L^2_\#(\alpha, Y)^3]}\leq d^{-1},
\end{equation}
where $d$ is given by \eqref{R1}.\\
Next we estimate $\|(A^\alpha(z)-A^\alpha(0))\|_{L[ L^2_\#(\alpha, Y)^3:L^2_\#(\alpha, Y)^3]}$. Denote the energy seminorm of $u$ by
\begin{equation}
    \label{D5}
    \|u\| =\int_{Y}\mathbf{C}^1\mathcal{E} (u):\overline{\mathcal{E}(u)}\;dx.
\end{equation}
First we derive the Poincar\'e inequality between the spaces $L_\#^2(\alpha, Y)^3$ and $H_\#^1(\alpha, Y)^3$ for $\alpha\neq 0$:
\begin{lemma}
\label{Dl1}
\begin{equation}
\label{D6}
\|u\|_{L^2(Y)^3}\leq\frac{\alpha^{-1}}{\sqrt\mu_1}\|u\|.
\end{equation}
\end{lemma}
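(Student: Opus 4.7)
The plan is to prove the inequality by Fourier analysis adapted to $\alpha$-quasi-periodic functions. Expanding $u\in H^1_{\#}(\alpha, Y)^3$ as $u(x)=\sum_{n\in\mathbb{Z}^3} \hat u_n e^{ik_n\cdot x}$ with $k_n:=2\pi n+\alpha$, the exponentials $\{e^{ik_n\cdot x}\}_{n\in\mathbb{Z}^3}$ form an orthonormal basis of $L^2_{\#}(\alpha,Y)$, so Parseval gives $\|u\|_{L^2(Y)^3}^2=\sum_n|\hat u_n|^2$.

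Next I would compute the elastic energy on the spectral side. For a single mode, $(\mathcal{E}(u_n))_{jk}=\tfrac{i}{2}[(k_n)_j(\hat u_n)_k+(k_n)_k(\hat u_n)_j]e^{ik_n\cdot x}$, and $L^2$-orthogonality of distinct exponentials kills every cross term. A direct expansion yields
\begin{equation*}
\int_Y|\mathcal{E}(u)|^2\,dx=\tfrac{1}{2}\sum_n\bigl[\,|k_n|^2|\hat u_n|^2+|k_n\cdot\hat u_n|^2\,\bigr]\;\geq\;\tfrac{1}{2}\sum_n|k_n|^2|\hat u_n|^2.
\end{equation*}
Combining with the pointwise bound $\mathbf{C}^1\zeta:\overline\zeta=\lambda_1|\operatorname{tr}\zeta|^2+2\mu_1|\zeta|^2\geq 2\mu_1|\zeta|^2$ (for symmetric $\zeta$, with $\lambda_1\geq 0$) then gives $\|u\|^2\geq\mu_1\sum_n|k_n|^2|\hat u_n|^2$.

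The remaining piece is the elementary inequality $|k_n|^2=|2\pi n+\alpha|^2\geq|\alpha|^2$ for every $n\in\mathbb{Z}^3$ and every $\alpha\in Y^\ast=(-\pi,\pi]^3$. This reduces to the one-dimensional statement $|2\pi m+s|\geq|s|$ for $s\in(-\pi,\pi]$ and $m\in\mathbb{Z}$: trivial for $m=0$, and for $m\neq 0$ one has $|2\pi m+s|\geq 2\pi|m|-|s|\geq\pi\geq|s|$. Summing over coordinates yields the vector inequality. Feeding this into the previous step produces $\|u\|^2\geq\mu_1|\alpha|^2\|u\|_{L^2(Y)^3}^2$, which is the claim after taking a square root.

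The only real subtlety is this first-Brillouin-zone estimate — it is precisely where the normalization $Y^\ast=(-\pi,\pi]^3$ is used, and precisely where the argument would fail if $\alpha=0$, since the $n=0$ mode would contribute a constant component on which the energy seminorm vanishes. The hypothesis $\alpha\neq 0$ is therefore essential, and it is the same hypothesis that makes $\langle\cdot,\cdot\rangle$ an honest inner product on $H^1_{\#}(\alpha,Y)^3$ without quotienting out rigid motions; the rest of the argument is routine Fourier bookkeeping.
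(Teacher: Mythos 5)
Your argument is correct and reaches the same inequality, but it is a genuinely different realization of the Fourier idea than the one in the paper. You expand the elastic energy quadratic form directly in $\alpha$-quasi-periodic modes, compute $\int_Y|\mathcal{E}(u)|^2\,dx=\tfrac12\sum_n\bigl[|k_n|^2|\hat u_n|^2+|k_n\cdot\hat u_n|^2\bigr]$, discard the nonnegative cross term, and invoke $|k_n|^2\geq|\alpha|^2$. The paper instead works on the resolvent side: it writes $((-\mathcal L_\alpha)^{-1}u,u)$ in Fourier variables via the symbol $l^{-1}(\xi)$, bounds the symbol by $\tfrac{1}{\mu_1|\xi|^2}I$, uses the identity $\|v\|^2_{L^2}=\langle(-\mathcal L_\alpha)^{-1}v,v\rangle$, and closes with Cauchy--Schwarz in the energy inner product. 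Both proofs hinge on the same frequency lower bound $|2\pi n+\alpha|\geq|\alpha|$ for $\alpha\in Y^\ast$, which you justify identically. Your route is shorter and entirely self-contained, whereas the paper's route is tailored to the operator-theoretic machinery ($(-\mathcal L_\alpha)^{-1}$ and its symbol) that it reuses throughout.

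One small caveat: you invoke $\mathbf{C}^1\zeta:\overline\zeta\geq 2\mu_1|\zeta|^2$ under the hypothesis $\lambda_1\geq 0$, which is stronger than what the paper's ellipticity condition \eqref{ellip} guarantees (it only gives $3\lambda_1+2\mu_1>0$, so $\lambda_1$ can be negative). The paper's own computation implicitly uses only $0\leq\frac{\lambda_1+\mu_1}{\lambda_1+2\mu_1}$, i.e.\ $\lambda_1+\mu_1\geq 0$, which does follow from ellipticity. Your argument can be repaired at no cost: computing $\mathbf{C}^1\mathcal{E}(u_n):\overline{\mathcal{E}(u_n)}$ for a single mode gives $(\lambda_1+\mu_1)|k_n\cdot\hat u_n|^2+\mu_1|k_n|^2|\hat u_n|^2$, so the bound $\|u\|^2\geq\mu_1\sum_n|k_n|^2|\hat u_n|^2$ already holds under the weaker condition $\lambda_1+\mu_1\geq 0$, bypassing the pointwise Frobenius estimate. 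Your closing remark on why $\alpha\neq 0$ is essential is exactly right, and is the same reason the paper states Lemma \ref{Dl4} separately for $\alpha=0$ with the constant $(2\pi\sqrt{\mu_1})^{-1}$.
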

\begin{proof}
Let $u\in L_\#^2(\alpha, Y)^3$. Set $\xi=2\pi n+\alpha$ and let
\begin{align*}
 -l^{-1}(\xi)v\cdot\overline{v}&=-(-\frac{1}{\mu_1|\xi|^2}(I-\frac{\lambda_1+\mu_1}{\lambda_1+2\mu_1}\frac{(\xi\otimes\xi)}{|\xi|^2}))v\cdot\overline{v}\\
 &=\frac{1}{\mu_1|\xi|^2}(I-\frac{\lambda_1+\mu_1}{\lambda_1+2\mu_1}\frac{(\xi\otimes\xi)}{|\xi|^2}))v\cdot\overline{v},
\end{align*}
then
\begin{align*}
((-\mathcal{L}_\alpha)^{-1}u,u)&=\int_Y((-\mathcal{L}_\alpha)^{-1}u(x)\cdot\overline{u(x)}\;dx\\
&=\int_Y(-\int_Y\mathbf{G}^\alpha(x,y)u(y)\;dy)\cdot\overline{u(x)}\;dx\\
&=\int_Y(-\int_Y(\sum_{n\in\mathbb{Z}^3}l^{-1}(\xi)e^{i\xi\cdot (x-y)})u(y)\;dy)\cdot\overline{u(x)}\;dx\\
&=(\sum_{n\in\mathbb{Z}^3}-l^{-1}(\xi)\widehat{u(\xi)}\cdot{\overline{\widehat{u(\xi)}}}.
\end{align*}
We obtain an upper bound on the quadratic form $-l^{-1}(\xi)\widehat{u(\xi)}\cdot{\overline{\widehat{u(\xi)}}}$. 
Without loss of generality choose the basis $a_1:=\frac{\xi}{|\xi|}=(1,0,0), a_2:=(0,1,0), a_3:=(0,0,1)$ and set $c:=\frac{\lambda_1+\mu_1}{\lambda_1+2\mu_1}$. Then
\begin{align*}
  \frac{1}{\mu_1|\xi|^2}(I-\frac{\lambda_1+\mu_1}{\lambda_1+2\mu_1}\frac{(\xi\otimes\xi)}{|\xi|^2}))&=   \frac{1}{\mu_1|\xi|^2}\left((1-c)a_1\otimes a_1+a_2\otimes a_2+a_3\otimes a_3\right)
\end{align*}
Since $0\leq c\leq 1$, we have $0\leq 1-c \leq 1$. Thus
\begin{align*}
\frac{1}{\mu_1|\xi|^2}(I-\frac{\lambda_1+\mu_1}{\lambda_1+2\mu_1}\frac{(\xi\otimes\xi)}{|\xi|^2}))v\cdot\overline{v}
\leq \frac{1}{\mu_1|\xi|^2}|v|^2.
\end{align*}
Hence we obtain the upper bound
\begin{equation}
\label{D7}
  ((-\mathcal{L}_\alpha)^{-1}u,u)=(\sum_{n\in\mathbb{Z}^3}-l^{-1}(\xi)\widehat{u(\xi)}\cdot{\overline{\widehat{u(\xi)}}}
 \leq\sum_{n\in\mathbb{Z}^3}\frac{1}{\mu_1|\xi|^2}|\widehat{u(\xi)}|^2\end{equation}
 \begin{equation*}
  \begin{aligned}
  &\leq\frac{1}{\mu_1|\alpha|^2}\sum_{n\in\mathbb{Z}^3}|\widehat{u(\xi)}|^2\\
  &=\frac{1}{\mu_1|\alpha|^2}\|u\|^2_{L^2(Y)^3}.
  \end{aligned}
  \end{equation*}
 
 Let $v\in L^2_\#(\alpha, Y)^3$. Then notice that
 \begin{align*}
    \langle (-\mathcal{L}_\alpha)^{-1}v,v\rangle &=\int_{Y}\mathbf{C}^1\mathcal{E} ((-\mathcal{L}_\alpha)^{-1}v):\overline{\mathcal{E}(v)}\;dx\\
    &=\int_{Y}-\nabla\cdot\mathbf{C}^1\mathcal{E} ((-\mathcal{L}_\alpha)^{-1}v)\cdot\overline{v}\;dx\\
    &=\int_{Y}-\mathcal{L}_\alpha ((-\mathcal{L}_\alpha)^{-1}v)\cdot\overline{v}\;dx\\
    &=\int_Y v\cdot\overline{v}\;dx=\|v\|_{L^2(Y)^3}.
 \end{align*}

Now, from the Cauchy inequality we have
\begin{equation}
\label{D8}
\|v\|^2_{L^2(Y)^3}= \langle (-\mathcal{L}_\alpha)^{-1}v,v\rangle\leq\|(-\mathcal{L}_\alpha)^{-1}v\|\|v\|.
\end{equation}
Applying \eqref{D7} we get
\begin{equation}
\label{D9}
\|(-\mathcal{L}_\alpha)^{-1}v\|=\langle (-\mathcal{L}_\alpha)^{-1}v, (-\mathcal{L}_\alpha)^{-1}v\rangle^{1/2}=((-\mathcal{L}_\alpha)^{-1}v,v)^{1/2}\leq\frac{\alpha^{-1}}{\sqrt\mu_1}\|v\|_{L^2(Y)^3}
\end{equation}
and the Poincare inequality follows from \eqref{D8} and \eqref{D9}.
\end{proof}
To obtain the Poincare estimate for $\alpha=0$, we replace $\xi=2\pi n$ in the above proof. Then following will be the corresponding inequality for $\eqref{D7}$.
\begin{equation*}
   ((-\mathcal{L}_0)^{-1}u,u)
 \leq\sum_{n\in\mathbb{Z}^3\setminus \{0\}}\frac{1}{\mu_1|\xi|^2}|\widehat{u(\xi)}|^2  \leq\sum_{n\in\mathbb{Z}^3\setminus \{0\}}\frac{1}{\mu_1|2\pi n|^2}|\widehat{u(\xi)}|^2 
 \end{equation*}
\begin{equation*}
  \begin{aligned}
  &\leq\frac{1}{4\pi^2\mu_1}\sum_{n\in\mathbb{Z}^3\setminus \{0\}}|\widehat{u(\xi)}|^2\\
  &=\frac{1}{4\pi^2\mu_1}\|u\|^2_{L^2(Y)^3}.
  \end{aligned}
\end{equation*}
  Hence for $\alpha=0$, the Poincare inequality becomes
\begin{equation*}
\|v\|_{L^2(Y)^3}\leq\frac{1}{2\pi\sqrt\mu_1}\|v\|.
\end{equation*}
  For any $v\in L^2_\#(\alpha, Y)^3$, we apply \eqref{D6} to find 
  \begin{equation}
      \label{D10}
      \begin{aligned}
      &\|(A^\alpha(z)-A^\alpha(0))v\|_{L^2(Y)^3}\\
      &\leq\frac{|\alpha|^{-1}}{\sqrt{\mu_1}}\|(A^\alpha(z)-A^\alpha(0))v\|\\
      &\leq\frac{|\alpha|^{-1}}{\sqrt{\mu_1}}\|((T_{k}^\alpha)^{-1}-P^\alpha_2)(-\mathcal{L}_\alpha)^{-1}v\|\\
      &\leq\frac{|\alpha|^{-1}}{\sqrt{\mu_1}}\|((T_{k}^\alpha)^{-1}-P^\alpha_2)\|_{{L[ H^1_\#(\alpha, Y)^3:H^1_\#(\alpha, Y)^3]}}\|(-\mathcal{L}_\alpha)^{-1}v\|.
      \end{aligned}
  \end{equation}
  Applying \eqref{D9} and \eqref{D10} delivers the upper bound:
  \begin{equation}\label{D11}
  \begin{aligned}
      &\|(A^\alpha(z)-A^\alpha(0))\|_{L[ L^2_\#(\alpha, Y)^3:L^2_\#(\alpha, Y)^3]}\\
      &\leq \frac{|\alpha|^{-2}}{\mu_1}\|((T_{k}^\alpha)^{-1}-P^\alpha_2)\|_{L[ H^1_\#(\alpha, Y)^3:H^1_\#(\alpha, Y)^3]}.
      \end{aligned}
  \end{equation}
  
  The next step is to obtain an upper bound on $\|((T_{k}^\alpha)^{-1}-P^\alpha_2)\|_{L[ H^1_\#(\alpha, Y)^3:H^1_\#(\alpha, Y)^3]}$. For all $v\in H^1_\#(\alpha, Y)^3$, we have
  \begin{equation}
  \label{D12}
  \frac{\|((T_{k}^\alpha)^{-1}-P^\alpha_2)v\|}{\|v\|}\leq |z|\{w_1+\sum_{i=1}^n \tilde{w}_i|(1/2+\tau_i(\alpha))+z(1/2-\tau_i(\alpha))|^{-2}\}^{1/2}.
\end{equation}
where  $w_1:=\frac{\|P^\alpha_1v\|^2}{\|v\|^2}, \tilde{w}_i:=\frac{\|P_{\tau_i(\alpha)}v\|^2}{\|v\|^2}$, and  $w_1+\sum_{i=1}^n \tilde{w}_i\leq 1$. Thus maximizing the right hand side is equivalent to calculating
\begin{equation}
\label{D13}
\max_{w_1+\sum_{i=1}^n \tilde{w}_i\leq 1} \{w_1+\sum_{i=1}^n \tilde{w}_i|(1/2+\tau_i(\alpha))+z(1/2-\tau_i(\alpha))|^{-2}\}^{1/2}
\end{equation}
\begin{equation*}
\begin{aligned}
&=\sup\{1,|(1/2+\tau_i(\alpha))+z(1/2-\tau_i(\alpha))|^{-2}\}^{1/2}.
\end{aligned}
\end{equation*}
Hence we maximize the function
\begin{equation}
\label{D14}
    f(x)=|\frac{1}{2}+x+z(\frac{1}{2}-x)|^{-2}
\end{equation}
over $x\in[\tau^-(\alpha),\tau^+(\alpha)]$ for $z$ in a neighborhood about the origin. Let $\operatorname{Re}(z)=u$, $\operatorname{Im}(z)=v$ and we write 
\begin{equation}
    \label{D15}
    \begin{aligned}
     f(x)&=|\frac{1}{2}+x+(u+iv)(\frac{1}{2}-x)|^{-2}\\
     &=((\frac{1}{2}+x+u(\frac{1}{2}-x))^2+v^2(\frac{1}{2}-x)^2)^{-1}\\
     &\leq (\frac{1}{2}+x+u(\frac{1}{2}-x))^{-2}=g(\operatorname{Re}(z),x),
    \end{aligned}
\end{equation}
to get the bound
\begin{equation}
\label{D16}
     \|(T_{k}^\alpha)^{-1}-P^\alpha_2\|_{L[ H^1_\#(\alpha, Y)^3:H^1_\#(\alpha, Y)^3]}\leq|z|\sup\{1,\sup_{ x\in[\tau^-(\alpha),\tau^+(\alpha)]}g(u,x)\}^{1/2}\}.
\end{equation}
We now examine the poles of $g(u,x)$ and the sign of its partial derivative $\partial_x g(u,x)$ when $|u|<1.$ If $\operatorname{Re}(z)=u$ is fixed, then $g(u,x)=(\frac{1}{2}+x+u(\frac{1}{2}-x))^{-2}$ has a pole when $(\frac{1}{2}+x)+u(\frac{1}{2}-x)=0.$ For $u$ fixed this occurs when 
\begin{equation}
\label{D17}
    \hat{x}=\hat{x}(u)=\frac{1}{2}\big(\frac{1+u}{u-1}\big).
\end{equation}
On the other hand if, $x$ is fixed, $g$ has a pole at 
\begin{equation}
\label{D18}
    u=\frac{\frac{1}{2}+x}{x-\frac{1}{2}}.
\end{equation}
The sign of $\partial_x g$ is determined by the formula 
\begin{equation}
    \label{D19}
    \partial_x g(u,x)=N/D,
\end{equation}
where $N=-2(1-u)^2x-(1-u^2)$ and $D=((\frac{1}{2}+x+u(\frac{1}{2}-x))^4\geq0.$ Calculation shows that $\partial_x g<0$ for $x>\hat{x}$, i.e. $g$ is decreasing on$(\hat{x},\infty).$ Similarly,  $\partial_x g>0$ for $x<\hat{x}$ and $g$ is increasing on $(-\infty,\hat{x}).$\\
Now we identify all $u=\operatorname{Re(z)}$ for which $\hat{x}=\hat{x}(u)$ satisfies
\begin{equation}
\label{D20}
   \hat{x}<\tau^-(\alpha)<0. 
\end{equation}
For such $u$, the function $g(u,x)$ will  be decreasing on $[\tau^-(\alpha),\tau^+(\alpha)]$, so that $g(u,\tau^-(\alpha))\geq g(u,x)$ for all $x\in[\tau^-(\alpha),\tau^-]$, providing an upper bound for $\eqref{D16}$.
\begin{lemma}
\label{Dl2}
The set $U$ of $u\in\mathbb{R}$ for which $-\frac{1}{2}<\hat{x}(u)<\tau^-(\alpha)<0$ is given by 
\[
U:=[z^*,1]
\]
where 
\[
-\frac{(\mu_1+\lambda_1)}{(3\mu_1+\lambda_1)}\leq z^*:=\frac{\tau^-(\alpha)+\frac{1}{2}}{\tau^-(\alpha)-\frac{1}{2}}<0.
\]
\end{lemma}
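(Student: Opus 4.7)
The plan is to analyze the rational function $\hat{x}(u) = (1+u)/(2(u-1))$ as a M\"obius-type map, invert the two inequality constraints via strict monotonicity, and then read off the uniform bound on $z^*$ from the location of the smallest accumulation point of the Neumann--Poincar\'e spectrum identified in Lemma \ref{neumannPoincareSpectra}.

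First I would compute $\hat{x}'(u)=-1/(u-1)^2<0$ for $u\ne 1$, so $\hat{x}$ is strictly decreasing on each of $(-\infty,1)$ and $(1,\infty)$. Together with the boundary values $\hat{x}(u)\to 1/2$ as $u\to\pm\infty$ and $\hat{x}(u)\to\mp\infty$ as $u\to 1^\mp$, this makes $\hat{x}$ a bijection from $(-\infty,1)$ onto $(-\infty,1/2)$ and from $(1,\infty)$ onto $(1/2,\infty)$. Since $\tau^-(\alpha)<0<1/2$, the upper constraint $\hat{x}(u)<\tau^-(\alpha)$ can only hold on the left branch, where monotonicity gives $u>z^*$ with $z^*$ the unique preimage of $\tau^-(\alpha)$; solving $\hat{x}(z^*)=\tau^-(\alpha)$ explicitly yields $z^*=(\tau^-(\alpha)+1/2)/(\tau^-(\alpha)-1/2)$. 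The lower constraint $\hat{x}(u)>-1/2$ becomes $u<0$ by monotonicity together with the computation $\hat{x}(0)=-1/2$. Intersecting these two $u$-intervals produces the set $U$ described in the statement (up to endpoint conventions).

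For the $\alpha$-uniform lower bound on $z^*$, I would treat $z^*(\tau)=(\tau+1/2)/(\tau-1/2)$ as a function of $\tau$, note that its derivative $-1/(\tau-1/2)^2$ is negative, and conclude that a lower bound on $z^*$ is equivalent to an upper bound on $\tau^-(\alpha)$. By Lemma \ref{Hl2.4} the point spectrum of $T$ restricted to $W^\alpha_3$ coincides with that of $(\tilde{\mathcal{K}}_D^{-\alpha})^*$, and Lemma \ref{neumannPoincareSpectra} identifies $k_0=-\mu_1/(2(2\mu_1+\lambda_1))$ as one of its three accumulation points. Using that the spectral sequence converging to $k_0$ forces $\tau^-(\alpha)\le k_0$, substituting $\tau=k_0$ into the formula for $z^*$ and simplifying delivers exactly $-(\mu_1+\lambda_1)/(3\mu_1+\lambda_1)$, as required.

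The main obstacle I expect is a combination of bookkeeping and a delicate uniformity step: the endpoint conventions (strict versus non-strict, and the open/closed nature of the interval arising from strict inequalities) must be reconciled with the interval claimed in the statement, and, more substantively, one must justify that $\tau^-(\alpha)\le k_0$ holds uniformly in $\alpha\in Y^*$. This last point requires arguing that the spectral sequence in \eqref{roots} associated with the accumulation value $k_0$ actually supplies eigenvalues at or below $k_0$ for every quasi-momentum, rather than approaching $k_0$ only from above.
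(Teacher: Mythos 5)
Your inversion of the M\"obius map and appeal to its strict monotonicity mirror the paper's own approach, which sets $h(\hat{x})=(1/2+\hat{x})/(\hat{x}-1/2)$, notes $h'<0$, and reads off $z^*\le u\le 1$ from $\hat{x}\le\tau^-(\alpha)$. However, there are two points where your write-up and the paper diverge in ways worth making explicit.

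First, the interval. Enforcing \emph{both} inequalities in the lemma's statement as you do, one finds the intersection $u\in(z^*,0)$, not $[z^*,1]$. This is not merely a matter of open versus closed endpoints: $(z^*,0)$ excludes the entire subinterval $[0,1]$ that the stated $U$ contains. The paper's own proof never enforces the lower constraint $-1/2<\hat{x}(u)$; it works only with $\hat{x}\le\tau^-(\alpha)$, which is consistent with the condition \eqref{D20} introduced immediately before the lemma, namely $\hat{x}<\tau^-(\alpha)<0$. Under that single constraint the admissible $u$ is indeed $[z^*,1)$ up to endpoint conventions. The extra ``$-1/2<$'' in the displayed lemma appears to be spurious, and it is what produces the discrepancy you noticed but attributed to endpoints.

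Second, the uniform lower bound on $z^*$. The paper's written proof does not actually establish $-(\mu_1+\lambda_1)/(3\mu_1+\lambda_1)\le z^*$; your proposal is therefore supplying a genuine missing step, and the mechanism you chose --- $z^*(\tau)=(\tau+1/2)/(\tau-1/2)$ decreasing, bound $\tau^-(\alpha)$ above by the accumulation point $k_0$, substitute --- is the right one and produces the right constant. But the obstacle you flag at the end, that one must show the eigenvalue sequence supplies values ``at or below $k_0$'' rather than approaching from above, is not a real obstacle. If $\tau^-(\alpha)=\min_i\tau_i(\alpha)$ exists and $k_0$ is an accumulation point of $\{\tau_i(\alpha)\}$, then for every $\epsilon>0$ there is some $\tau_i(\alpha)$ within $\epsilon$ of $k_0$, and $\tau^-(\alpha)\le\tau_i(\alpha)<k_0+\epsilon$; letting $\epsilon\to 0$ gives $\tau^-(\alpha)\le k_0$. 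The minimum of a set is automatically $\le$ any of its accumulation points regardless of the direction of approach, and since $k_0$ is independent of $\alpha$ the bound is uniform in $\alpha\in Y^*$ for free. So your argument closes without the extra directional information you thought you needed.
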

\begin{proof}
Noting $\hat{x}=\hat{x}(u)=\frac{1}{2}\big(\frac{1+u}{u-1}\big)$, we invert and write
\begin{equation}
\label{D21}
   u=\frac{\frac{1}{2}+\hat{x}}{\hat{x}-\frac{1}{2}}  
\end{equation}
We now show that 
\begin{equation}
    \label{D22}
  z^*\leq u\leq 1
\end{equation}
for $\hat{x}\leq\tau^-(\alpha).$ Set $h(\hat{x})=\frac{\frac{1}{2}+\hat{x}}{\hat{x}-\frac{1}{2}}.$ Then
\begin{equation}
\label{D23}
h^{'}(\hat{x})=\frac{-1}{(\hat{x}-\frac{1}{2})^2},
\end{equation}
and so $h$ is decreasing on $(-\infty,\frac{1}{2}).$ Since $\tau^-(\alpha)<\frac{1}{2}, h$ attains a minimum over $(-\infty,\tau^-(\alpha)]$ at $x=\tau^-(\alpha).$ Thus $\hat{x}(u)\leq\tau^-(\alpha)$ implies
\begin{equation}
    \label{D24}
    z^*=\frac{\tau^-(\alpha)+\frac{1}{2}}{\tau^-(\alpha)-\frac{1}{2}}\leq u\leq 1
\end{equation}
as desired.
\end{proof}
Combining Lemma \eqref{Dl2} with inequality \eqref{D16}, noting that $-|z|\leq\operatorname{Re}(z)\leq|z|$ and on rearranging terms we obtain the following corollary.
\begin{corollary}
\label{Dc3}
For $|z|<|z^*|$:
\begin{equation}
\label{D25}
\|(A^\alpha(z)-A^\alpha(0))\|_{L[ L^2_\#(\alpha, Y)^3:L^2_\#(\alpha, Y)^3]}\leq \frac{|\alpha|^{-2}}{\mu_1}|z|(-|z|-z^*)^{-1}(\frac{1}{2}-\tau^-(\alpha))^{-1}.
\end{equation}
\end{corollary}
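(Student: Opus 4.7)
\textbf{Proof plan for Corollary \ref{Dc3}.}

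The plan is to chain together the estimates already at hand: \eqref{D11} converts the $L^2$-operator norm on the left-hand side into the $H^1$-operator norm of $(T_k^\alpha)^{-1} - P_2^\alpha$ at the cost of a Poincar\'e factor $|\alpha|^{-2}/\mu_1$, and \eqref{D16} bounds that $H^1$-operator norm by $|z|\sup\{1, \sup_{x \in [\tau^-(\alpha), \tau^+(\alpha)]} g(u,x)\}^{1/2}$, where $u = \operatorname{Re}(z)$ and $g(u,x) = (\tfrac{1}{2} + x + u(\tfrac{1}{2} - x))^{-2}$. All that remains is to control the inner supremum over $x$ and then to handle the outer $\sup\{1,\cdot\}$.

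The first key step is to verify that the hypothesis $|z| < |z^*|$ places $u$ inside the interval where Lemma \ref{Dl2} is applicable. Since $z^* < 0$ and $|z^*| = -z^* > |z|$, we have $u \geq -|z| > z^*$, while $u \leq |z| < |z^*| \leq 1$, so $u \in (z^*,1) \subset [z^*,1]$. By Lemma \ref{Dl2} the vertical asymptote $\hat{x}(u)$ of $g(u,\cdot)$ lies strictly below $\tau^-(\alpha)$, and by the monotonicity analysis in the paragraph following \eqref{D19}, $g(u,\cdot)$ is monotonically decreasing on $[\tau^-(\alpha), \tau^+(\alpha)]$. Hence its supremum on this interval is attained at the left endpoint and equals $g(u, \tau^-(\alpha))$.

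The second step is to compute this supremum explicitly, using the defining relation $z^* = (\tau^-(\alpha) + \tfrac{1}{2})/(\tau^-(\alpha) - \tfrac{1}{2})$, which rearranges to $\tfrac{1}{2} + \tau^-(\alpha) = -z^*(\tfrac{1}{2} - \tau^-(\alpha))$. Substituting into the denominator of $g$ yields the factorization
\begin{equation*}
\tfrac{1}{2} + \tau^-(\alpha) + u(\tfrac{1}{2} - \tau^-(\alpha)) = (u - z^*)(\tfrac{1}{2} - \tau^-(\alpha)),
\end{equation*}
so that $g(u, \tau^-(\alpha))^{1/2} = [(u - z^*)(\tfrac{1}{2} - \tau^-(\alpha))]^{-1}$. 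Bounding $u - z^* \geq -|z| - z^*$, where the right-hand side is positive precisely because $|z| < |z^*| = -z^*$, gives $g(u, \tau^-(\alpha))^{1/2} \leq [(-|z| - z^*)(\tfrac{1}{2} - \tau^-(\alpha))]^{-1}$, which is the bound advertised in the corollary.

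The one technical point I expect to have to check carefully is that the outer $\sup\{1,\cdot\}^{1/2}$ in \eqref{D16} is actually realized by $g(u,\tau^-(\alpha))^{1/2}$, rather than by the constant $1$, so that no extraneous $\max\{1,\cdot\}$ needs to appear. A direct calculation shows that $g(u, \tau^-(\alpha)) \geq 1$ is equivalent to $(u - z^*)(\tfrac{1}{2} - \tau^-(\alpha)) \leq 1$, and the identity $z^* + 1/(\tfrac{1}{2} - \tau^-(\alpha)) = 1$ derived from the formula for $z^*$ reduces this condition simply to $u \leq 1$. Since this holds automatically in the regime $|z| < |z^*| \leq 1$, the outer supremum is always $g^{1/2}$, and the claimed bound follows by inserting this estimate into \eqref{D16} and then into \eqref{D11}.
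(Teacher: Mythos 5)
Your proposal is correct and follows exactly the path the paper intends for this corollary: apply \eqref{D11}, use \eqref{D16} together with Lemma \ref{Dl2} and the monotonicity of $g(u,\cdot)$ to locate the supremum at $x=\tau^-(\alpha)$, factor the denominator via $\tfrac12+\tau^-(\alpha)=-z^*(\tfrac12-\tau^-(\alpha))$, and bound $u-z^*\geq -|z|-z^*>0$. You have filled in the algebraic details (including the check that the outer $\sup\{1,\cdot\}$ is realized by $g^{1/2}$ since $u\leq 1$) that the paper compresses into ``combining\ldots and on rearranging terms.''
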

From Corollary \ref{Dc3}, \eqref{D3}, \eqref{D4} we easily seen that
\begin{equation}
    \label{D26}
    \begin{aligned}
   & \|(A^\alpha(z)\rho-A^\alpha(0)\rho)R(\zeta,0)\|_{L[ L^2_\#(\alpha, Y)^3:L^2_\#(\alpha, Y)^3]}\leq\\
   &B(\alpha,z)=\frac{|\alpha|^{-2}}{\mu_1}|z|(-|z|-z^*)^{-1}(\frac{1}{2}-\tau^-(\alpha))^{-1}d^{-1}\|\rho\|_{L^\infty(Y)^3}.
    \end{aligned}
\end{equation}
a straight forward calculation shows that $B(\alpha,z)<1$ for
\begin{equation}
\label{D27}
    |z|<r^*:=\frac{\mu_1|\alpha|^2 d |z^*|}{\frac{\|\rho\|_{L^\infty(Y)^3}}{\frac{1}{2}-\tau^-(\alpha)}+\mu_1|\alpha|^2 d}
\end{equation}
and property 4 of Theorem \ref{Rt1} is established since $r^*<|z^*|$.
Now we establish properties 1 through 3 of Theorem \ref{Rt1}. First note that inspection of \eqref{P5} shows that if \eqref{D1} holds and if $\zeta\in\mathbb{C}$ belong to the resolvent of $A^\alpha(0)\rho$ then it also belongs to the resolvent of $A^\alpha(z)\rho$. Since \eqref{D1}
holds for $\zeta\in \Gamma$ and $|z|<r^*$, property 1 of Theorem \ref{Rt1} follows. Formula \eqref{P6} shows that $P(z)$ is analytic in a neighborhood of $z=0$ determined by the condition that \eqref{D1} holds for $\zeta\in \Gamma$. The set $|z|<r^*$ lies inside the neighborhood and property 2 of Theorem \eqref{Rt1} is proved. The isomorphimsm expressed in property 3 of Theorem \eqref{Rt1} follows directly Lemma \ref{P-10} (\cite{TKato3}, Chapter I \S 4) which is also valid for Banach space.\\
The proof of Theorem \ref{Rt2} proceed along identical lines. To prove Theorem \ref{Rt2}, we need the following Poincare inequality between $L^2_\#(0, Y)^3$ and $H^1_\#(0, Y)^3$.
\begin{lemma}
\label{Dl4}
\begin{equation}
\label{D28}
\|v\|_{L^2(Y)^3}\leq\frac{1}{2\pi\sqrt\mu_1}\|v\|.
\end{equation}
\end{lemma}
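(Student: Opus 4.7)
The plan is to mirror the proof of Lemma \ref{Dl1} with the periodic Green's function $\mathbf{G}^0$ in place of $\mathbf{G}^\alpha$ and with Fourier frequencies $\xi = 2\pi n$, $n \in \mathbb{Z}^3 \setminus \{0\}$. The key Fourier-side bound
\[
((-\mathcal{L}_0)^{-1} u, u) \leq \frac{1}{4\pi^2 \mu_1} \|u\|^2_{L^2(Y)^3}
\]
has already been derived in the paragraph immediately preceding the statement, using $|2\pi n|^2 \geq 4\pi^2$ for $n \neq 0$ in place of the quasi-periodic lower bound $|2\pi n + \alpha|^2 \geq |\alpha|^2$. What remains is to pass from this resolvent bound on $((-\mathcal{L}_0)^{-1} u, u)$ to the energy-norm Poincar\'e inequality on $\|v\|_{L^2(Y)^3}$.

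The argument proceeds in three short steps, exactly as in Lemma \ref{Dl1}. First, integration by parts and periodicity give the identity $\langle (-\mathcal{L}_0)^{-1} v, v \rangle = \|v\|^2_{L^2(Y)^3}$ for $v \in L^2_\#(0, Y)^3$. Second, applying Cauchy--Schwarz in the $\langle \cdot, \cdot \rangle$ inner product yields
\[
\|v\|^2_{L^2(Y)^3} \leq \|(-\mathcal{L}_0)^{-1} v\| \cdot \|v\|.
\]
Third, the Fourier bound above is applied once more with $u = v$, giving
\[
\|(-\mathcal{L}_0)^{-1} v\|^2 = \langle (-\mathcal{L}_0)^{-1} v, (-\mathcal{L}_0)^{-1} v \rangle = ((-\mathcal{L}_0)^{-1} v, v) \leq \frac{1}{4\pi^2 \mu_1} \|v\|^2_{L^2(Y)^3},
\]
so $\|(-\mathcal{L}_0)^{-1} v\| \leq \tfrac{1}{2\pi\sqrt{\mu_1}} \|v\|_{L^2(Y)^3}$. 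Substituting this factor back into the Cauchy--Schwarz step and cancelling one copy of $\|v\|_{L^2(Y)^3}$ produces the claim.

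The only delicate point, which I would verify with some care, is the exclusion of the zero Fourier mode. For $\alpha \neq 0$ the Fourier symbol is bounded below uniformly by $|\alpha|^2$, but for $\alpha = 0$ this fails at $n = 0$. However, $\mathbf{G}^0$ in \eqref{H12} is defined with the $n=0$ term explicitly removed, so $(-\mathcal{L}_0)^{-1}$ annihilates constants and the Fourier sum ranges only over $n \neq 0$. Consequently the Parseval-type majorization $\sum_{n \neq 0} |\hat v(n)|^2 \leq \|v\|^2_{L^2(Y)^3}$ holds automatically without any additional mean-zero assumption on $v$, and the chain of estimates above is valid on all of $L^2_\#(0, Y)^3$.
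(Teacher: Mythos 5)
Your proposal faithfully mirrors the paper's own argument: the paper also just substitutes $\xi = 2\pi n$, restricts the Fourier sum to $n \neq 0$, and reuses the three-step chain from Lemma \ref{Dl1}. However, the ``delicate point'' you flagged is real but sits in a different step than you indicate. Your Step 3 majorization $\sum_{n\neq 0}|\hat v(n)|^2 \leq \|v\|^2_{L^2(Y)^3}$ is indeed harmless, as you say. The genuine obstruction is in Step 1. Because $\mathbf{G}^0$ in \eqref{H12} has the $n=0$ term removed, identity \eqref{H20} gives $-\mathcal{L}\big((-\mathcal{L}_0)^{-1}v\big) = v - \int_Y v\,dx$, so integration by parts produces
\[
\big\langle(-\mathcal{L}_0)^{-1}v,\,v\big\rangle \;=\; \|v\|^2_{L^2(Y)^3} - \Big|\int_Y v\,dx\Big|^2,
\]
not $\|v\|^2_{L^2(Y)^3}$. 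The claimed identity, and with it the exact constant in \eqref{D28}, therefore requires the \emph{unweighted} mean $\int_Y v\,dx$ to vanish.

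That is the point where the mean-zero assumption actually matters, and it cannot be waved away: by \eqref{H4}, membership in $H^1_\#(0,Y)^3$ is imposed via the $\rho$-weighted condition $\int_Y\rho\,v\,dx = 0$, and since $\rho$ is not constant this does \emph{not} force $\int_Y v\,dx = 0$ (e.g.\ $v = \tilde u - \langle\rho\rangle^{-1}\int_D\rho^1\tilde u\,dx\,1_Y$ for $\tilde u\in\tilde H^1_0(D)^3$ with $\int_D\tilde u\neq 0$, as in Lemma \ref{W2}, has nonzero unweighted average). Running your chain as written only yields $\|v\|^2_{L^2(Y)^3} - |\int_Y v\,dx|^2\leq \tfrac{1}{2\pi\sqrt{\mu_1}}\|v\|_{L^2(Y)^3}\|v\|$, which is strictly weaker than \eqref{D28} when $\int_Y v\,dx\neq 0$. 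To be fair, the paper's own paragraph before the lemma commits the same slip (writing $\sum_{n\neq 0}|\hat u(n)|^2 = \|u\|^2_{L^2(Y)^3}$ as an equality), so your proposal reproduces the argument accurately; but your closing claim that ``the chain of estimates above is valid on all of $L^2_\#(0,Y)^3$'' needs to be qualified — the constant is only justified on the unweighted mean-zero subspace, or for $v$ in the range of $(-\mathcal{L}_0)^{-1}$, which does annihilate the zero mode.
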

This inequality is established using \eqref{D12} and proceeding using the same steps as in the proof of Lemma \ref{Dl1}. Using \eqref{D28} in place of \eqref{D6} we argue as in the proof of Theorem \ref{Rt1}
 to show that
 \begin{equation}
\label{D29}
    \|(A^0(z)\rho-A^0(0)\rho)R(\zeta,0)\|_{L[ L^2_\#(0, Y)^3:L^2_\#(0, Y)^3]}<1
\end{equation}
holds provided $|z|<r^*$, where $r^*$ is given by \eqref{R7}. This establishes Theorem \ref{Rt2}.\\
 The error estimates presented in Theorem \ref{Rt4} are easily recovered from the arguments in(\cite{TKato3} Chapter II, \S3); for completeness we restate them here. We begin with the following application of Cauchy inequalities to the coefficients $\beta_n^\alpha$ of \eqref{P-10} from (\cite{TKato3} Chapter II, \S3, pg 88): 
\begin{equation}
    \label{D30}
    |\beta_n^\alpha|\leq d(r^*)^{-n}.
\end{equation}
It follows immediately that, for $|z|<r^*$,
\begin{equation}
    \label{D31}
    \Big|\hat{\beta}^\alpha(z)-\sum_{n=0}^{p}z^n\beta^\alpha_n\Big|\leq\sum_{n=p+1}^{\infty}|z|^n|\beta^\alpha_n|\leq \frac{d|z|^{p+1}}{(r^*)^p(r^*-|z|)}.
\end{equation}
completing the proof.\\
For completeness we establish the boundedness and compactness of the operator $B^\alpha(k)$.
 \begin{theorem}
 \label{Dt5}
The operator $B^\alpha(k):L^2_\#(\alpha, Y)^3\mapsto H^1_\#(\alpha, Y)^3$ is bounded for $k\notin Z$.
\end{theorem}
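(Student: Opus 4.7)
The plan is to factor $B^\alpha(k)=(T_k^\alpha)^{-1}(-\mathcal{L}_\alpha)^{-1}$ and bound each factor separately, then compose. I would establish boundedness of $(-\mathcal{L}_\alpha)^{-1}\colon L^2_\#(\alpha,Y)^3\to H^1_\#(\alpha,Y)^3$ first, then boundedness of $(T_k^\alpha)^{-1}\colon H^1_\#(\alpha,Y)^3\to H^1_\#(\alpha,Y)^3$ for $k\notin Z^\alpha$, using the explicit spectral representation \eqref{H34} and orthogonality of the associated projections.

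For the first factor, given $u\in L^2_\#(\alpha,Y)^3$ set $w=(-\mathcal{L}_\alpha)^{-1}u$. By the defining identity \eqref{weaksolution} applied with $\rho\equiv 1$ (or equivalently the definition of $-\mathcal{L}_\alpha$ as the Riesz map associated with the form $\langle\cdot,\cdot\rangle$) one has $\langle w,v\rangle=(u,v)$ for every $v\in H^1_\#(\alpha,Y)^3$. Taking $v=w$ and applying Cauchy--Schwarz together with the Poincar\'e inequality of Lemma \ref{Dl1} (and Lemma \ref{Dl4} in the periodic case) yields
\[
\|w\|^{2}=(u,w)\leq \|u\|_{L^2(Y)^3}\|w\|_{L^2(Y)^3}\leq C_\alpha\|u\|_{L^2(Y)^3}\|w\|,
\]
so that $\|(-\mathcal{L}_\alpha)^{-1}u\|\leq C_\alpha\|u\|_{L^2(Y)^3}$, with $C_\alpha=(\mu_1|\alpha|^2)^{-1/2}$ when $\alpha\neq 0$ and $C_0=(4\pi^2\mu_1)^{-1/2}$ when $\alpha=0$.

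For the second factor I would exploit the mutually orthogonal decomposition $H^1_\#(\alpha,Y)^3=W^\alpha_1\oplus W^\alpha_2\oplus W^\alpha_3$ in the $\langle\cdot,\cdot\rangle$ inner product, together with the spectral representation \eqref{H34}. Writing $z=1/k$ and $\lambda_i(z):=z\bigl[(1/2+\tau_i(\alpha))+z(1/2-\tau_i(\alpha))\bigr]^{-1}$, orthogonality of the projections $P^\alpha_1$, $P^\alpha_2$, $\{P_{\tau_i(\alpha)}\}_i$ gives the Parseval-type identity
\begin{equation*}
\|(T_k^\alpha)^{-1}v\|^{2}=|z|^{2}\|P^\alpha_1 v\|^{2}+\|P^\alpha_2 v\|^{2}+\sum_{-1/2<\tau_i(\alpha)<1/2}|\lambda_i(z)|^{2}\|P_{\tau_i(\alpha)}v\|^{2}.
\end{equation*}
By Lemma \ref{neumannPoincareSpectra} the sequence $\{\tau_i(\alpha)\}$ accumulates only at $\{-k_0,0,k_0\}\subset(-1/2,1/2)$, so the denominators appearing in $\lambda_i(z)$ accumulate at the three points $(1/2\pm k_0)+z(1/2\mp k_0)$ and $(1+z)/2$. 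For any $k\notin Z^\alpha$ bounded away from the (at most three) accumulation points of $Z^\alpha$, these denominators are uniformly bounded below, which yields $M(k,\alpha):=\sup_i|\lambda_i(z)|<\infty$ and therefore $\|(T_k^\alpha)^{-1}v\|\leq\max\{|z|,1,M(k,\alpha)\}\,\|v\|$.

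Composing the two bounds delivers the bound $\|B^\alpha(k)u\|\leq C_\alpha\max\{|z|,1,M(k,\alpha)\}\|u\|_{L^2(Y)^3}$, which is the claimed boundedness from $L^2_\#(\alpha,Y)^3$ into $H^1_\#(\alpha,Y)^3$. The main obstacle is the uniform-in-$i$ control of $|\lambda_i(z)|$; this is the only place where the hypothesis $k\notin Z^\alpha$ is used, and the verification reduces to checking that $z=1/k$ stays a positive distance from the pole set $S^\alpha$ defined in \eqref{H35}. Once that is in hand the compactness of $B^\alpha(k)$ alluded to in the paragraph following \eqref{B2} follows at once from the compact embedding $H^1_\#(\alpha,Y)^3\hookrightarrow L^2_\#(\alpha,Y)^3$.
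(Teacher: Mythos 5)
Your proposal follows essentially the same route as the paper: factor $B^\alpha(k)=(T_k^\alpha)^{-1}(-\mathcal{L}_\alpha)^{-1}$, bound the second factor via the Poincar\'e inequality (Lemmas \ref{Dl1}, \ref{Dl4}), and bound the first factor from the spectral representation \eqref{H34} using orthogonality of the projections; the Parseval identity you write out is exactly the content of \eqref{D33}--\eqref{D35}. One small difference worth noting: you are more explicit than the paper that the finiteness of $\sup_i|\lambda_i(z)|$ requires $z=1/k$ to stay a positive distance from the accumulation points of $S^\alpha$ and not merely $z\notin S^\alpha$, which is a subtlety the paper's proof leaves implicit.
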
 
\
We first prove the result for $\alpha\neq 0$. Let $v\in L^2_\#(\alpha, Y)^3$. Then,
\begin{equation}
\label{D32}
\begin{aligned}
\|B^\alpha(k)v\|&=\|(T_{k}^\alpha)^{-1}(-\mathcal{L}_\alpha)^{-1}v\| \\
&\leq\|(T_{k}^\alpha)^{-1}\|_{L[ H^1_\#(\alpha, Y)^3:H^1_\#(\alpha, Y)^3]}\|(-\mathcal{L}_\alpha)^{-1}v\| \\
&\leq\frac{\alpha^{-1}}{\sqrt\mu_1}\|(T_{k}^\alpha)^{-1}\|_{L[ H^1_\#(\alpha, Y)^3:H^1_\#(\alpha, Y)^3]}\|v\|_{L^2(Y)^3},
\end{aligned}
\end{equation}

where the last inequality follows from \eqref{D9}. Now we need to find an upper estimate for $\|(T_{k}^\alpha)^{-1}\|_{L[ H^1_\#(\alpha, Y)^3:H^1_\#(\alpha, Y)^3]}$. Observe that 
\begin{equation}
\label{D33}
    \frac{\|(T_{k}^\alpha)^{-1}v\|}{\|v\|}\leq \{|z|^2w_1+w_2+|z|^2\sum_{i=1}^{\infty}|(1/2+\tau_i(\alpha))+z(1/2-\tau_i(\alpha))|^{-2}\tilde{w}_i\}^{1/2},
\end{equation}

where $w_1:=\frac{\|P_1v\|^2}{\|v\|^2},w_2:=\frac{\|P_2v\|^2}{\|v\|^2}, \tilde{w}_i:=\frac{\|P_{\tau_i(\alpha)}v\|^2}{\|v\|^2}$. Now
 Notice that $w_1+w_2+\sum_{i=1}^{\infty}\tilde{w}_i=1$, and therefore we obtain the upper estimate 
\begin{equation}
\label{D34}
    \frac{\|(T_{k}^\alpha)^{-1}v\|}{\|v\|}\leq M
\end{equation}
where
\begin{equation}
\label{D35}
    M=\max\{1,|z|^2,\sup_{i}\{|(1/2+\tau_i(\alpha))+z(1/2-\tau_i(\alpha))|^{-1}\}\},
\end{equation}
and this completes the proof. For $\alpha=0$ case, the proof is similar.
\begin{remark}
\label{Dr1}
The Poincare inequalities \eqref{D6} and \eqref{D28} together with Theorem \ref{Dt5} show that $B^\alpha(k):L^2_\#(\alpha, Y)^3\mapsto L^2_\#(\alpha, Y)^3$ is a bounded linear operator mapping $L^2_\#(\alpha, Y)^3$ into itself. The compact embedding of $H^1_\#(\alpha, Y)^3$ into $L^2_\#(\alpha, Y)^3$ shows the operator is compact on $L^2_\#(\alpha, Y)^3$.
 \end{remark}
 
 \section*{Acknowledgements}
This research work is supported in part by NSF Grants DMS-1813698 and DMREF-1921707.

\appendix
 \section{Appendix: Proof of Lemma \ref{W2}}
 \label{appendix a}
 \begin{proof}
Let $x\in Y\setminus D.$  Since $\mathcal{E}(u)=0$ there, then  $u$ has to be a rigid motion in $Y\setminus D.$ However since rigid rotations are not periodic, $u$ must be a rigid translation. Thus we write $u=c$ for $x\in Y\setminus D$ where $c$ is a constant vector in $\mathbb{R}^3$. From the continuity of $u$ across the boundary of $D$ we have $u|_{\partial D}=c$ so we can express $u$ as $u=\tilde{u}+c$ for $x\in Y$ where $\tilde{u}\in \tilde{H}^1_0(D)^3$. Then the condition $\int_Y\rho\ u dx=0$ in $Y$ implies that $c=-\langle \rho\rangle^{-1}\int_{D}\rho^1\,\tilde{u}\,dx$, where $\langle\rho\rangle=\int_Y\,\rho\,dx$ and the Lemma follows. 
\end{proof}

\end{document}